\documentclass[a4paper,reqno,10pt]{amsart}
\usepackage{newtxtext}
\usepackage{microtype}
\usepackage{fix-cm}
\usepackage[T1]{fontenc}
\usepackage[numbers]{natbib}  
\usepackage{textcomp}
\usepackage{amsfonts} 
\usepackage{amssymb}
\usepackage{amsthm}
\usepackage{amsmath} 
\usepackage{mathrsfs} 
\usepackage{dsfont}
\usepackage{esint}
\usepackage{algorithm}
\usepackage{soul}
\usepackage{algpseudocode}
\usepackage[english]{babel} 
\usepackage{xcolor}
\definecolor{gray}{named}{gray}

\usepackage[left=3.5cm,right=3.5cm,top=3.5cm,bottom=3cm,headsep=0.7cm]{geometry}
\parindent = 10pt

\input{insbox}
\usepackage{tikz}
\usepackage{pgf,pgfplots,wrapfig}
\usetikzlibrary{arrows}
 

\usepackage[scriptsize,bf]{caption}


\usepackage{enumerate}
\usepackage{enumitem}


\usepackage{hyperref}
\usepackage{cleveref}
\usepackage{mathtools}
\hypersetup{
    colorlinks,
    linkcolor={red!80!black},
    citecolor={blue!80!black}
}


\theoremstyle{plain}
\begingroup
\theoremstyle{plain}
\newtheorem{theorem}{Theorem}[section]
\newtheorem{corollary}[theorem]{Corollary}
\newtheorem{proposition}[theorem]{Proposition}
\newtheorem{assumption}[theorem]{Assumption}
\newtheorem{lemma}[theorem]{Lemma}
\theoremstyle{definition}
\newtheorem{definition}[theorem]{Definition}
\theoremstyle{remark}
\newtheorem{remark}[theorem]{Remark}
\newtheorem{example}[theorem]{Example}
\endgroup

\theoremstyle{definition}
\theoremstyle{remark}

\numberwithin{equation}{section}

\setcounter{tocdepth}{1}

\makeatletter
\newcommand{\myitem}[1]{%
\item[#1]\protected@edef\@currentlabel{#1}%
}
\makeatother


\def\ddj{\dot\Delta_j}



\newcommand{\R}{\mathbb{R}}
\newcommand{\N}{\mathbb{N}}
\newcommand{\Z}{\mathbb{Z}}
 
\newcommand{\M}{\mathbb{M}} 
\newcommand{\rank}{\mathrm{rank}}

\mathsurround=1pt
\mathchardef\emptyset="001F

\renewcommand{\d}{{\mathfrak{d}}}
\renewcommand{\tilde}{\widetilde}

\renewcommand{\l}{\langle}
\renewcommand{\r}{\rangle}
\newcommand{\eps}{\varepsilon}

\renewcommand{\L}{\mathcal{L}} 
\renewcommand{\i}{{\mathrm{i}}}

\sloppy

\author[T.~Crin-Barat]{Timoth\'ee~Crin-Barat} 
\address[T.~Crin-Barat]{Universit\'e Paul Sabatier,  Institut de Math\'ematiques de Toulouse, Route de Narbonne 118, 31062 Toulouse Cedex 9, France.}
\email{timothee.crin-barat@math.univ-toulouse.fr}

\author[L. Liverani]{Lorenzo Liverani*}
\address[L. Liverani]{Friedrich-Alexander-Universit\"at Erlangen-N\"urnberg, Department of Data Science, Chair for Dynamics, Control and Numerics (Alexander von Humboldt Professorship), Cauerstr. 11, 91058 Erlangen, Germany. }
\email{lorenzo.liverani@fau.de}

\author[L-Y. Shou]{Ling-Yun Shou}
\address[L-Y. Shou]{School of Mathematical Sciences and Mathematical Institute, Nanjing Normal University, Nanjing, 210023,
P. R. China.}
\email{shoulingyun11@gmail.com}

\author[E. Zuazua]{Enrique Zuazua}
\address[E. Zuazua]{Friedrich-Alexander-Universit\"at Erlangen-N\"urnberg, Department of Data Science, Chair for Dynamics, Control and Numerics (Alexander von Humboldt Professorship), Cauerstr. 11, 91058 Erlangen, Germany. 
	\newline \indent 
	Chair of Computational Mathematics, Fundación Deusto,	Avenida de las Universidades, 24, 48007 Bilbao, Basque Country, Spain. 
	\newline \indent
	Universidad Autónoma de Madrid, Departamento de Matemáticas, Ciudad Universitaria de Cantoblanco, 28049 Madrid, Spain.}
	\email{enrique.zuazua@fau.de}

\thanks{*Corresponding author: lorenzo.liverani@fau.de}


\title[Asymptotics for Hyperbolic systems with non-symmetric relaxation]{Large-Time Asymptotics for Hyperbolic Systems with non-symmetric relaxation: An Algorithmic Approach
}

 
\pgfplotsset{compat=1.16}

\keywords{Hyperbolic systems, Asymptotic behavior, Kalman condition, Partially dissipative systems, Non-Symmetric relaxation, Hypocoercivity}
\subjclass[2020]{35L02, 35B40, 35L45}

\begin{document}

 \begin{abstract}
     We study the stability of one-dimensional linear hyperbolic systems with non-symmetric relaxation. Introducing a new frequency-dependent Kalman stability condition, we prove an abstract decay result underpinning a form of \textit{inhomogeneous hypocoercivity}. In contrast with the homogeneous setting, the decay rates depend on how the Kalman condition is fulfilled and, in most cases, a loss of derivative occurs: one must assume an additional regularity assumption on the initial data to ensure the decay.
 
    Under structural assumptions, we refine our abstract result by providing an algorithm, of wide applicability, for the construction of Lyapunov functionals. This allows us to systematically establish decay estimates for a given system and uncover algebraic cancellations (beyond the reach of the Kalman-based approach) reducing the loss of derivatives in high frequencies. To demonstrate the applicability of our method, we derive new stability results for the Sugimoto model, which describes the propagation of nonlinear acoustic waves, and for a beam model of Timoshenko type with memory.


 \end{abstract}

\maketitle

\section{Introduction}
\label{sec:1}
\noindent
In this work, we consider one-dimensional linear hyperbolic systems of the form
\begin{align}
\partial_t U + A\partial_x U + B U = 0,\label{eq:1-1}
\end{align}
ruling the evolution of the variable $U = U(t,x):\R^+ \times \R^n \to \R^n$. Here, $A \in \mathbb M(n,n)$ is a symmetric matrix, whereas $B\in \mathbb M(n,n)$ is allowed to be non-symmetric. Denoting by
\[
B^a = \frac{B - B^\top}{2}, \qquad B^s = \frac{B+B^\top}{2}
\]
its skew-symmetric and symmetric part respectively, the system \eqref{eq:1-1} can be reshaped into
\begin{equation}
    \label{eq:main}
\partial_t U +A\partial_x U + B^a U + B^s U = 0.
\end{equation}
Systems of the form \eqref{eq:main} are ubiquitous in physical modelling, where they can either serve as self-standing mathematical models or, perhaps more importantly, as the linearization of nonlinear systems describing some physical phenomenon. In this context, it is paramount to understand the asymptotic properties of the solution to \eqref{eq:main}, as these properties often translate into global-in-time well-posedness properties for the associated nonlinear system (see the forthcoming Remark \ref{rq:nonlin} and Example \ref{ex:sugi}).


Our goal is to provide a unifying framework for the large-time analysis of hyperbolic PDEs with non-symmetric relaxation when the system is endowed with a partially dissipative structure, namely when the matrix $B^s$ is of the form
\begin{equation}\label{B}
\begin{aligned}
B^s =
\begin{bmatrix}
   0 &0 \\
   0 &D
\end{bmatrix},
\end{aligned}
\end{equation}
where $D \in \M(n_2,n_2)$, $1\leq n_2<n$, is a positive definite symmetric matrix. Under these conditions, there exists a $\kappa>0$ (in this case the smallest eigenvalue of $D$) such that, for all $X \in \R^{n_2}$,
\begin{align}
\l D X, X\r \geq \kappa |X|^2.\label{partialdiss}
\end{align}
Due to the lack of coercivity of the dissipative operator $B^s$ and the conservative properties of the operator $A\partial_x+B^a$, standard $L^2$ energy estimates do not provide enough information to derive time-decay estimates for the solution. Indeed, using the symmetry of $A$ and the skew-symmetry of $B^a$, we obtain
\begin{align}
    \dfrac{1}{2}\dfrac{d}{dt}\|U\|_{L^2}^2+\|B^sU\|_{L^2}^2=0.
\end{align}
Nevertheless, in many scenarios, systems of the form \eqref{eq:main} exhibit spectral properties suggesting that the $L^2$ norm of the whole solution decays to zero in time. To justify such results with a priori estimates, one has to resort to hypocoercive approaches, which seek to uncover hidden damping mechanisms arising in the interactions of the dissipative and conservative parts of the system. 


\subsection{The symmetric case}
We recall some results regarding the stability of hyperbolic systems with symmetric relaxation, i.e. $B^a=0$. 
When $B^s$ is fully dissipative, i.e. $B^s>0$, the solutions decay exponentially in time \cite{LiTT}. In the partially dissipative scenario 
 \eqref{B}-\eqref{partialdiss}, the dissipation induced by $B^sU$ lacks coercivity, as it affects only the $n_2$ components of the solution. 
To analyze this situation, the celebrated papers \cite{Kawa1,SK} introduce the Shizuta-Kawashima (SK) stability condition: $(A,B^s)$ satisfy the (SK) condition if
\begin{align}   
\{\textrm{eigenvectors of A}\} \cap \textrm{Ker}(B^s)=\{0\}. \tag{SK}
\end{align}
Under this condition, time-decay estimates in the partially dissipative setting were obtained in \cite{Kawa1,SK,UKS}. Kawashima and Yong \cite{KY} then formulated a notion of entropy which plays a key role in symmetrizing quasilinear systems. Under the (SK) and entropy conditions, multiple studies were dedicated to show the global existence and stability of classical solutions for nonlinear hyperbolic systems with symmetric relaxation. For instance, Hanouzet and Natalini \cite{HanouzetNatalini} and Yong \cite{Yong} proved the global existence of classical solutions of the Cauchy problem for initial data close to equilibrium in Sobolev spaces. Ruggeri and Serre \cite{RS1} investigated the asymptotical $L^2$-stability of solutions around constant equilibrium states. Bianchini et al. \cite{BHN} carried out a detailed analysis of the Green function and established the $L^p$-decay rates and asymptotic stability of solutions with small perturbations.

More recently, Beauchard and Zuazua in \cite{BZ} showed the equivalence of the (SK) condition and the Kalman rank condition employed in control theory, namely: $(A,B^s)$ satisfy the Kalman rank condition if
\begin{align}\label{Kalman0}
\rank \left[
\begin{array}{c}
    B^s  \\
    B^s A \\
    \ldots \\
    B^s A^{n-1}
\end{array}\right] = n.
\end{align}
Under this condition, they proved the $L^2$-stability of system \eqref{eq:main} (with $B^a=0$) by constructing Lyapunov functionals, in the spirit of Villani's hypocoercivity theory developed in \cite{Villani}. For existence and decay results of nonlinear system building upon this approach, we refer to \cite{CBD3,CBD2,CBSZ,KYDecay,XK1,XK2} and references therein.



A basic example fitting the symmetric relaxation theory is given by the classical one-dimensional weakly damped wave equation
\[
\partial_{tt}u - \partial_{xx}u + \partial_tu = 0
\]
which can be rewritten in the form \eqref{eq:main} by introducing the variable $v =u_x$, obtaining
\[
\begin{dcases}
    \partial_t u + \partial_x v + u = 0, \\
    \partial_tv + \partial_x u = 0,
\end{dcases}
\]
so that 
\[
A = 
\left[\begin{array}{cc}
    0 & 1  \\
    1 & 0
\end{array}\right], \quad
B^s = 
\left[\begin{array}{cc}
    1 & 0  \\
    0 & 0
\end{array}  \right] \quad \text{and} \quad B^a=0.
\]
The system is partially dissipative, a property reflected by the energy equality
\[
\frac{1}{2}\frac{d}{dt}(|\widehat{u}|^2+|\widehat{v}|^2) + |\widehat{u}|^2 = 0, 
\]
where $\widehat{f}:=\mathcal{F}f$ denotes the Fourier transform of a tempered distribution $f$.
A quick computation shows that
\[
\rank \left[
\begin{array}{c}
    B^s  \\
    B^s A
\end{array}\right] = 2,
\]
so that the Kalman rank condition holds and the theory in \cite{BZ} can be applied. In this case, the Lyapunov functional leading to the decay rate reads
\begin{align}
    \mathcal{L}(t)=(|\widehat{u}|^2+|\widehat{v}|^2)+ \min\Big\{|\xi|,\frac{1}{|\xi|}\Big\} {\rm Im}\,\Big\l \widehat{u}, \frac{\xi}{|\xi|}\widehat{v} \Big\r
\end{align}
where $\langle \cdot,\cdot \rangle $ denotes the Hermitian
product on $\mathbb{C}$. Differentiating in time $\mathcal{L}$, one obtains
\begin{align}
  \dfrac{d}{dt}\mathcal{L}+c\min\{1,|\xi|^2\}(|\widehat{u}|^2+|\widehat{v}|^2)\leq 0,
\end{align} 
highlighting dissipative effects for $v$. Then, using that $\mathcal{L}\sim |\widehat{u}|^2+|\widehat{v}|^2$,
one recovers the decay estimates expected by the spectral analysis, i.e. a heat-like behaviour in low frequencies and exponential damping in high frequencies: for $(u_0,v_0)\in L^2\cap L^1$, there exist two constants $C>0$ and $\gamma^*>0$ such that
 \begin{align}\label{decay}
& \Vert (u,v)^\ell(t) \Vert_{L^{\infty}} \le C t^{-\frac{1}{2}} \Vert (u_0,v_0)\Vert_{L^1},\\ \label{decayhf}
&\Vert  (u,v)^h(t) \Vert_{L^2} \le C e^{-\gamma_{*} t} \Vert (u_0,v_0)\Vert_{L^2},
\end{align} where $(u,v)^\ell= (\widehat u,\widehat v) (t,\xi)\mathbf{1}_{|\xi|\leq 1}$ and $(u,v)^h= (\widehat u,\widehat v) (t,\xi)\mathbf{1}_{|\xi|\geq 1}$.
\medbreak
\noindent
For general systems with symmetric relaxation, the Lyapunov functional reads
\begin{align}\label{LyaSym}
\L(\xi,t)=\dfrac{1}{2}|U(\xi,t)|^2+\min\Big\{|\xi|,\frac{1}{|\xi|}\Big\}\sum_{k=1}^{n-1}\eps_k\textrm{Im}(\l B^sA_\omega^{k-1}\widehat U(\xi,t),B^sA^k_\omega\widehat U(\xi,t)\r),
\end{align}
where $A_\omega=A\omega$ for $\omega=\xi/|\xi|$.
For $\eps_k>0$ chosen small enough, one derives similar decay estimates as \eqref{decay}-\eqref{decayhf} when the Kalman rank condition \eqref{Kalman0} holds.
The structure of \eqref{LyaSym} underlines an iterative procedure designed to reveal the dissipative effect for each variable of the system. Indeed, each term added to the energy can be interpreted as the interaction between a variable whose dissipation has already been recovered and
another whose dissipation remains to be obtained. Accordingly, for $k=1$, the interaction between $B^s\widehat{U}$ and $B^sA\widehat{U}$ yields the dissipation of the latter, which is subsequently utilized in the next term to derive the dissipation of $B^sA^2\widehat{U}$. This process continues iteratively and naturally concludes at $k=n-1$, as the Cayley-Hamilton theorem ensures that no additional gain can be achieved beyond this point. Since $A\partial_x$ is a skew-symmetric operator, we can also interpret its action as a rotation in the space of functions, capable at each step of propagating the dissipation generated by $B^s$ to one of the $n - n_2$ non-damped (initially) directions.

\subsection{The skew-symmetric case -- State of the art}
\label{subsec12} Although the theory of symmetric relaxation can handle a wide range of physical models, many systems of interest fall outside the scope of the setting described above. A significant example is the Sugimoto system 
\begin{equation}
\label{sugimoto}
\begin{dcases}
    \partial_t u + \partial_x \left(a u + b \frac{u^2}{2}\right) +\Omega^2\varphi=0, \\
    \partial_{t}\varphi + \eps\varphi + \omega^2 p -u=0, \\ 
    \partial_t p = \varphi,
\end{dcases}
\end{equation}
proposed by Sugimoto in \cite{Sugimoto92} to model the propagation of nonlinear acoustic waves in a tube, with the ultimate goal of describing the high-amplitude waves generated by high-speed trains in a tunnel. 
Linearizing the system around the null solution, and rescaling the variables as $(u,\varphi,p) \mapsto (u, \Omega \varphi, \Omega \omega p)$, one obtains
\begin{equation}
\label{sugimoto_lin}
    \begin{dcases}
    \partial_t u + a\partial_x u +\Omega\varphi = 0, \\
    \partial_{t}\varphi - \Omega u + \eps\varphi + \omega p = 0, \\ 
    \partial_t p - \omega \varphi = 0,
\end{dcases}
\end{equation}
which is a system of the form \eqref{eq:main}, where
\[ A = 
\left[\begin{array}{ccc}
    a & 0 & 0  \\
    0 & 0 & 0 \\
    0 & 0 & 0
\end{array} \right], \quad B^a = 
\left[\begin{array}{ccc}
     0 & \Omega & 0\\
     -\Omega &0 & \omega \\
     0 &-\omega &0
\end{array} \right]\quad \text{and }\quad
B^s = 
\left[\begin{array}{ccc}
    0 & 0 &0 \\
    0 & \eps &0 \\
    0 &0 &0
\end{array} \right].
\]
In \cite{JuncaLombard20}, the finite-time formation of singularities and the existence of entropy solutions for system \eqref{sugimoto} was investigated. However, the asymptotic behavior was not addressed, leaving the question of whether a unique global-in-time strong solution exists for small initial data still open.
\medbreak
In fact, addressing systems with skew-symmetric dissipation remains a challenge. In the literature, the analysis often requires \emph{ad hoc} approaches to identify suitable energy-like functionals through the use of appropriate multipliers. Moreover, decay rates vary significantly depending on the algebraic structure of the systems, more precisely, on how the Kalman condition is satisfied. Typically, one observes weaker diffusion in the low-frequency regime and loss of derivatives in high frequencies.

We refer to the decay properties of the dissipative Timoshenko system \cite{IHK}, the Euler-Maxwell system \cite{UedaKawa2011,CBPSX}, the Timoshenko system with memory \cite{MoriMem}, the Timoshenko-Cattaneo system \cite{TimoCat}, the thermoelastic system of MGT-type \cite{PQU}, the 1D porous-elasticity system \cite{QUU} and the Bresse-Cattaneo system \cite{BresseCat}. In each of these cases, the matrix $B$ is non-symmetric and, in high frequencies, the real part of the eigenvalues of $(i\xi A +B)$ is asymptotically equal to $|\xi|^{-2\alpha}$ with $\alpha=0,1 \text{ or }2$, while in low frequencies they are asymptotically equal to $|\xi|^{2\beta}$ with $\beta=1 \text{ or }2$. Note that when $\alpha=0$ and $\beta=1$, we retrieve the decay rates of the symmetric case $B^a=0$.

The picture is not much different for hyperbolic systems with non-symmetric relaxation on bounded domains, and similar techniques can be applied to recover decay for high frequencies. We mention, without the claim of being exhaustive, the analysis of systems of Bresse and Timoshenko type \cite{DTimo, DPTimo}, as well as the investigation of thermoviscoelastic models \cite{MGTF1, Antitermo}. We also refer to \cite{AchleitnerArnoldMehrmannNighsch25} where the authors study hypocoercive properties (for short and large times) of linear evolution equations.
\smallbreak
While many studies analyzed specific examples of hyperbolic systems with non-symmetric relaxation, relatively few works have attempted to tackle the abstract problem in full generality. Two references in this context are  \cite{UDK1} and \cite{Mori}. In  \cite{UDK1}, Ueda, Duan and Kawashima formulated a new structural condition to analyze the particular type of regularity-loss mechanism when $\alpha=1$. In \cite{Mori}, Mori presented a (SK)-type mixed criterion that can be applied under restrictions on $\alpha$ and $\beta$, allowing to handle some cases where $\alpha=0,1$ and $\beta=1,2$.
\medbreak

Compared to the symmetric setting, where the two operators $A$ and $B^s$ can only interact in one way, the main difficulty in analyzing hyperbolic systems with non-symmetric relaxation lies in the presence of three operators, the two conservative ones $A\partial_x$ and $B^a$ and the dissipative one $B^s$. As mentioned above, the skew-symmetric operators $A\partial_x$ and $B^a$ can be interpreted as rotations acting on the initial dissipative operator $B^s$ and may propagate its dissipative effect to new directions. However, in contrast to the symmetric case, at each step of the iterative process, the two rotational effects of the two conservative operators can either be combined or performed independently, each scenario leading to different outcomes regarding the decay rates. This comes from the fact that the orders of the operators $A \partial_x$ and $B^a$ are different and require us to develop a form of \textit{inhomogeneous hypocoercivity} theory. 

\subsection{Our contributions} 

In the present paper, we develop a method to derive asymptotic decay results for $n\times n$ one-dimensional partially dissipative systems with non-symmetric relaxation, extending the approach proposed in \cite{BZ} dedicated to the symmetric case. Compared to the results of \cite{Mori,UDK1}, our structural assumptions on the matrices $A$ and $B$ are more general and our method can be applied to a wider variety of systems without restriction on the type of decay rates, including the linearized Sugimoto model \eqref{sugimoto_lin}, for which we can show the asymptotic decay of the solution. 
 
We begin by proving a general abstract result for systems with non-symmetric relaxation in Theorem \ref{thm:decayKalman}. Specifically, under a Kalman-type condition, we establish the large-time stability of solutions of the system \eqref{eq:main}. This algebraic Kalman condition can be seen as the natural extension of the one employed in \cite{BZ} and connects the three operators appearing in our system by considering the interactions between the conservative operator $A\partial_x + B^a$ and the dissipative operator $B^s$.

However, Theorem \ref{thm:decayKalman} does not provide a simple way to estimate the decay rates of a given system. 
For this reason, our second main result, Theorems \ref{th:improvedh}-\ref{th:improvedl}, improves upon it by refining the construction of the Lyapunov functional so that, in the end, a precise estimate for the decay rate of the solution can be easily computed. Moreover, this construction allows us to describe a phenomenon that the Kalman-based method developed in \ref{thm:decayKalman} cannot capture: it provides explicit algebraic conditions on the matrices $A$, $B^a$ and $B^s$ which, if satisfied, produce a cancellation reducing the loss of derivative in high frequencies. This phenomenon is well known for systems with non-symmetric relaxation, a paradigmatic example being the \emph{equal wave speed} condition for the Timoshenko system \cite{IHK}. Our new approach allows us to further apprehend this phenomenon for general hyperbolic systems. Nevertheless, we need to assume relatively strong structural conditions on the matrices to be able to justify this phenomenon. Relaxing these assumptions and improving our understanding of this cancellation mechanism is the subject of ongoing research.


The refinement of the Lyapunov functional is achieved through an algorithm, which we consider to be one of the main contributions of this work. Indeed, due to its simple and systematic nature, it can be easily applied to systems of any dimension, significantly reducing the effort of finding Lyapunov functionals for any given hyperbolic system with relaxation. We refer to Section \ref{sec:applications} for practical applications of our methodology and to Example \ref{ex:sugi} for a stability result concerning the linear Sugimoto model \eqref{sugimoto_lin}.






\subsection{Outline of the paper} In forthcoming Section \ref{sec:2}, we introduce the Inhomogeneous Kalman rank condition, and state our first main result, Theorem \ref{thm:decayKalman}. The proof of the latter is presented in Section \ref{sec:3}. In Section \ref{sec:4}, we introduce the algorithm used to construct the new Lyapunov functional. The construction is different depending on whether we work in high or low frequencies: Section \ref{sec:ImproveHF} focuses on the former and Section \ref{sec:ImproveLF} on the latter. In Section \ref{sec:7}, we state and prove our second result, divided into Theorem \ref{th:improvedh} and \ref{th:improvedl}. Section \ref{sec:8} is devoted to an in-depth analysis of the physically relevant and common case $\textrm{rank}(B^s)=1$. We then present some examples in Section \ref{sec:applications}, showing that our algorithm can be successfully applied to recover the optimal decay rates of some well-known physical systems. We draw some conclusions and discuss extensions in the final Section \ref{sec:10}. Technical results related to rank-one matrices are collected in Appendix \ref{sec:appendixB}, while the proof of Lemma \ref{lemma:equiv} is carried out in Appendix \ref{sec:appproof}.

\section{The Inhomogeneous Kalman Method}
\label{sec:2}
\noindent
In this section, we provide a Kalman-type analysis of the decay rates in the case of hyperbolic partially dissipative systems with non-symmetric relaxation terms. 

First, we introduce a Kalman condition adapted to studying \eqref{eq:main}. Applying the Fourier transform to \eqref{eq:main}, for every $\xi\in \R$, we obtain
\begin{equation}
    \label{eq:TFmain}
\partial_t \widehat{U} +\i \xi A\widehat{U}  + B^a \widehat{U}  + B^s \widehat{U}  = 0.
\end{equation}

\begin{definition}[Inhomogeneous Kalman rank condition]
\label{def:inhomo}
  Let $K\geq 1$.  We say that $(B^s,A,B^a)$ satisfies the inhomogeneous Kalman rank condition of order $K$  if, for every $\xi\in \R^*$,
\begin{align}\label{condition:span}
\rank \left[
\begin{array}{c}
    B^s  \\
    B^s (i\xi A+B^a) \\
    \ldots \\
    B^s (i\xi A+B^a)^{K}
\end{array}\right] = n.
\end{align}
\end{definition}
Due to the inhomogeneous aspect (in terms of $\xi$) of the above condition, we need the following lemma which relates the Kalman components $B^s(A\partial_x+B^a)^kU$ to the solution $U$.

\begin{lemma}
\label{lemma:equiv}
    Let the inhomogeneous Kalman condition \eqref{condition:span} hold. There exist integers $\alpha\in \N$ and $\beta \in \N$  such that 
\begin{enumerate}
    \item[(i)] for $|\xi|\geq 1$, 
    \begin{align}\label{decay:alpha}
        \sum_{k=0}^{K}|\xi|^{-2k}|B^s(\i \xi A+B^a)^k \widehat{U}|^2 \geq c |\xi|^{-2\alpha}| \widehat{U}|^2,
    \end{align}
    \item[(ii)] for $|\xi|\leq 1$,
    \begin{align}\label{decay:beta}
        \sum_{k=0}^{K}|B^s(\i \xi A+B^a)^k \widehat{U}|^2 \geq c |\xi|^{2\beta}| \widehat{U}|^2,
    \end{align}
\end{enumerate}
where $c>0$ is a universal constant.
\end{lemma}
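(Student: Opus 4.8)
The plan is to establish both inequalities by a compactness argument on the unit sphere, using the homogeneity structure of the Kalman matrix in the frequency variable $\xi$. The key observation is that the rank condition \eqref{condition:span} holds for \emph{every} $\xi\in\R^*$, so one obtains a lower bound at each frequency, but the bound degenerates as $\xi\to\infty$ (high frequencies) and as $\xi\to 0$ (low frequencies) at polynomial rates which we must identify as $|\xi|^{-2\alpha}$ and $|\xi|^{2\beta}$ respectively. First I would treat the high-frequency regime. Writing $\omega = \xi/|\xi|$, factor out the leading power of $\xi$ in each block: $B^s(\i\xi A+B^a)^k = (\i\xi)^k B^s(A_\omega + (\i\xi)^{-1}B^a)^k$, so that $|\xi|^{-2k}|B^s(\i\xi A+B^a)^k\widehat U|^2 = |B^s(A_\omega+(\i\xi)^{-1}B^a)^k\widehat U|^2$. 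Introducing $z = 1/(\i\xi)$, which ranges over a bounded set as $|\xi|\geq 1$, the left-hand side of \eqref{decay:alpha} becomes $\Phi(z,\omega)[\widehat U] := \sum_{k=0}^K |B^s(A_\omega + z B^a)^k\widehat U|^2$, a quadratic form in $\widehat U$ with coefficients depending continuously (indeed polynomially) on $(z,\omega)$ in the compact set $\{|z|\leq 1\}\times\{|\omega|=1\}$.

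The heart of the argument is then: $\Phi(z,\omega)$ is a positive semidefinite quadratic form whose kernel, call it $N(z,\omega)$, may be nontrivial for some values of $(z,\omega)$ — in particular at $z=0$ it is the kernel of the purely $A$-based Kalman matrix, which need not be $\{0\}$. However, when $z\neq 0$ and $\xi = 1/(\i z)\in\R^*$, the form $\Phi(z,\omega)$ is positive definite by the rank hypothesis. The degeneration as $z\to 0$ is controlled polynomially because $\Phi$ depends polynomially on $z$: by a standard argument (e.g.\ reducing to the smallest eigenvalue $\lambda_{\min}(z,\omega)$ of the matrix representing $\Phi$, which is an algebraic function of $(z,\omega)$, and invoking a {\L}ojasiewicz-type or Puiseux-expansion estimate, or more elementarily bounding a suitable subdeterminant from below), one gets $\lambda_{\min}(z,\omega)\geq c|z|^{2\alpha}$ for some integer $\alpha\geq 0$ and all $(z,\omega)$ in the relevant compact set with $z\neq 0$. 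Translating back via $|z| = 1/|\xi|$ yields \eqref{decay:alpha}. The low-frequency case \eqref{decay:beta} is handled symmetrically: now no rescaling is needed since $|\xi|\leq 1$ keeps all powers of $\xi$ bounded, and one studies $\Psi(\xi)[\widehat U]:=\sum_{k=0}^K|B^s(\i\xi A+B^a)^k\widehat U|^2$ directly as a polynomial quadratic form in $\xi$ on $[-1,1]$; it is positive definite for $\xi\neq 0$ by the hypothesis, degenerate at $\xi = 0$ (where it reduces to the $B^a$-Kalman matrix), and the same polynomial-degeneration estimate gives $\lambda_{\min}(\xi)\geq c|\xi|^{2\beta}$.

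The main obstacle I anticipate is making rigorous the claim that the smallest eigenvalue of a polynomially parametrized family of positive semidefinite matrices, which vanishes only at an isolated parameter value, decays exactly at an integer polynomial rate there. This is where one needs either the analytic-perturbation theory of eigenvalues (Kato) combined with the fact that a nonzero real-analytic function of one variable vanishes to finite order, or a cleaner determinantal argument: the Kalman matrix $\mathcal K(\xi)$ has full column rank $n$ for $\xi\neq 0$, so $\det(\mathcal K(\xi)^\top\mathcal K(\xi))$ is a nonzero polynomial in $\xi$ (after the high-frequency rescaling, a nonzero polynomial in $z$), hence vanishes to some finite integer order $2\alpha$ (resp.\ $2\beta$) at the origin, and combining this with an upper bound on $\|\mathcal K\|$ and the relation between determinant, trace, and smallest eigenvalue of a Gram matrix produces the stated bound with a clean integer exponent. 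I would present the determinantal route as it avoids heavy spectral machinery and keeps the constants explicit; the detailed verification that the exponents so obtained are the same $\alpha,\beta$ that govern the eigenvalue asymptotics of $\i\xi A + B^s + B^a$ (as advertised in the introduction) can be deferred, since the lemma only asserts existence of \emph{some} such integers. The remaining bookkeeping — uniformity in $\omega$ on the compact unit sphere, and patching the bound for $|z|$ bounded away from $0$ (where positive-definiteness is uniform by compactness) with the bound near $z=0$ — is routine.
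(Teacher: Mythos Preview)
Your proposal is correct, and the determinantal route you outline is sound: after the rescaling $t=1/\xi$ (in high frequencies) or working directly in $\xi$ (in low frequencies), the Gram matrix $G(t)=\mathcal K(t)^*\mathcal K(t)$ has entries polynomial in the real parameter, is Hermitian positive semidefinite everywhere, and is positive definite for nonzero parameter by the rank hypothesis; hence $\det G$ is a real polynomial, nonnegative everywhere and positive away from the origin, so it vanishes to some even order $2\alpha$ (resp.\ $2\beta$) there, and the bound $\lambda_{\min}(G)\ge \det G/(\operatorname{tr} G)^{n-1}$ finishes the job since $\operatorname{tr} G$ is bounded on the compact parameter range. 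Two small remarks: you should write $\mathcal K^*\mathcal K$ rather than $\mathcal K^\top\mathcal K$ since the entries are complex, and the \L ojasiewicz/Puiseux apparatus you mention is unnecessary --- the elementary determinantal argument you advocate already delivers an integer exponent.

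The paper takes a different technical route. Rather than working with the full Gram matrix, it selects $n$ linearly independent rows of the (rescaled) Kalman matrix to form a square invertible matrix $\tilde{\mathsf M}(\xi)$, rewrites it as $|\xi|^{-\alpha'}\tilde{\mathsf M}'(\xi)$ with $\tilde{\mathsf M}'$ polynomial in $\xi$, and then bounds $\|(\tilde{\mathsf M}')^{-1}\|$ via the Cayley--Hamilton theorem: writing the characteristic polynomial $p(\lambda,\xi)$ and using $p(\tilde{\mathsf M}',\xi)=0$ gives an explicit polynomial expression for $(\tilde{\mathsf M}')^{-1}$ with denominator $\det\tilde{\mathsf M}'$, which is a nonvanishing polynomial on $|\xi|\ge 1$ and hence bounded below. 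Your approach has the advantage of sidestepping the row-extraction step (which tacitly requires that a \emph{fixed} choice of $n$ row indices works for all $\xi$, a point that needs a short argument via minors); the paper's approach, on the other hand, produces an explicit formula for the inverse and makes the polynomial growth of $\|\tilde{\mathsf M}^{-1}\|$ transparent. Both rest on the same principle --- polynomial degeneration of a quadratic form that is definite away from an isolated parameter value --- and yield the same conclusion.
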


The proof of this lemma is quite technical and provides little insight into the forthcoming discussion. For this reason, we moved it to the Appendix \ref{sec:appproof}.

\begin{remark}
The frequency weight $|\xi|^{-2k}$ appearing in \eqref{decay:alpha} is connected to the construction of the Lyapunov functional that we employ to justify the decay of the solutions in high frequencies. Notice that in the symmetric case $B^a=0$, the weight absorbs the $\xi$ parameter coming from the operator $A\partial_x$ and ensures that there is no loss of regularity, i.e., $\alpha=0$, recovering \eqref{decayhf}.\end{remark}

We are now in a position to state our first main result.
\begin{theorem}\label{thm:decayKalman}
   Let $K\geq 1$ and assume that $(B^s,A,B^a)$ satisfies the inhomogeneous Kalman rank condition of order $K$. Let $\alpha$ and $\beta$ be the smallest integers such that \eqref{decay:alpha} and \eqref{decay:beta} hold, and assume that the initial data satisfies $U_0\in L^1 \cap H^{\gamma \alpha}$ with $\gamma>0$. Then, there exists a constant $C>0$ such that
   \begin{enumerate}
        \item In high frequencies: if $\alpha\geq 1$, 
        the solution of \eqref{eq:main} satisfies
    \begin{align}
    \|U^h\|_{L^2} \leq C(1+t)^{-{\frac{\gamma}{2}}}\|U_0\|_{H^{\gamma\alpha}}
\end{align}
where  $U^{h}=\mathcal{F}^{-1}(\mathbf{1}_{|\xi|\geq 1}\widehat{U})$. 
If $\alpha=0$, then
\begin{align}
    \|U^h\|_{L^2} \leq Ce^{-t}\|U_0\|_{L^2}.
\end{align}
\item In low frequencies: if $\beta\geq 1$, the solution of \eqref{eq:main} satisfies
\begin{align}
    \|U^\ell\|_{L^2} \leq C(1+t)^{-\frac{1}{4\beta}}\|U_0\|_{L^2\cap L^1}
\end{align}
where  $U^{\ell}=\mathcal{F}^{-1}(\mathbf{1}_{|\xi|\leq 1}\widehat{U})$.
If $\beta=0$, then
\begin{align}
    \|U^\ell\|_{L^2} \leq Ce^{-t}\|U_0\|_{L^2}.
\end{align}
 \end{enumerate}
\end{theorem}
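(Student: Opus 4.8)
The plan is to prove a pointwise-in-frequency exponential decay estimate of the form $|\widehat U(t,\xi)|^2\lesssim e^{-c\phi(\xi)t}|\widehat{U_0}(\xi)|^2$, with $\phi(\xi)=|\xi|^{-2\alpha}$ for $|\xi|\geq 1$ and $\phi(\xi)=|\xi|^{2\beta}$ for $|\xi|\leq 1$, by a hypocoercivity argument in the spirit of \cite{Villani,BZ}, and then to integrate in $\xi$ over the two frequency regions. Write $L_\omega=L_\omega(\xi):=\i\xi A+B^a$, so that \eqref{eq:TFmain} becomes $\partial_t\widehat U=-(L_\omega+B^s)\widehat U$; since $\xi A$ is real symmetric and $B^a$ is real skew-symmetric, $L_\omega$ is skew-Hermitian, and taking the real part of the Hermitian product of \eqref{eq:TFmain} with $\widehat U$ gives the basic energy identity
\[
\tfrac12\tfrac{d}{dt}|\widehat U|^2+\l B^s\widehat U,\widehat U\r=0 .
\]
By \eqref{B}--\eqref{partialdiss} one has $\l B^s\widehat U,\widehat U\r\gtrsim|B^s\widehat U|^2$, so $|\widehat U(t,\xi)|$ is non-increasing in $t$, but this identity alone only dissipates the $B^s$-component of the solution.

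Next I would introduce, following the iterative philosophy recalled for \eqref{LyaSym}, the Lyapunov functional
\[
\L(\xi,t):=\tfrac12|\widehat U|^2+\sum_{k=1}^{K}\eps_k\,w_k(\xi)\,\mathrm{Re}\,\l B^sL_\omega^{k-1}\widehat U,\ B^sL_\omega^{k}\widehat U\r ,
\]
with weights $w_k(\xi)=|\xi|^{-2k}$ when $|\xi|\geq 1$ and $w_k(\xi)=1$ when $|\xi|\leq 1$, and constants $\eps_1\gg\eps_2\gg\cdots\gg\eps_K>0$ to be fixed; one may assume $K\geq n-1$, since enlarging $K$ preserves the Kalman condition and does not increase $\alpha$ or $\beta$. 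As $|B^sL_\omega^{j}\widehat U|\lesssim(1+|\xi|)^{j}|\widehat U|$, each cross term is $\lesssim\eps_k|\widehat U|^2$, so for $\eps_1$ small $\L(\xi,t)\sim|\widehat U(t,\xi)|^2$ uniformly in $\xi$. Differentiating, using $\partial_t(L_\omega^{j}\widehat U)=-L_\omega^{j+1}\widehat U-L_\omega^{j}B^s\widehat U$, the time derivative of the $k$-th cross term produces the favourable term $-\eps_kw_k|B^sL_\omega^k\widehat U|^2$, a transport remainder controlled by $\eps_kw_k\,|B^sL_\omega^{k-1}\widehat U|\,|B^sL_\omega^{k+1}\widehat U|$ that couples level $k$ to levels $k\pm1$, and dissipative remainders of the schematic size $\eps_kw_k(1+|\xi|)^{k}\,|B^s\widehat U|\big(|B^sL_\omega^{k-1}\widehat U|+|B^sL_\omega^k\widehat U|\big)$; the top-order term $B^sL_\omega^{K+1}\widehat U$ is rewritten in terms of $B^s\widehat U,\dots,B^sL_\omega^{K}\widehat U$ by Cayley--Hamilton. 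Splitting the transport remainder with Young's inequality and a frequency-dependent parameter $\mu_k(\xi)$, its $|B^sL_\omega^{k+1}\widehat U|^2$-part must be absorbed by the good term at level $k+1$ and its $|B^sL_\omega^{k-1}\widehat U|^2$-part by that at level $k-1$, which forces $\eps_kw_k/(\eps_{k-1}w_{k-1})\lesssim\mu_k\lesssim\eps_{k+1}w_{k+1}/(\eps_kw_k)$; since $w_{k-1}w_{k+1}\sim w_k^2$, this window is non-empty iff $(\eps_k)^2\ll\eps_{k-1}\eps_{k+1}$, which pins down a super-geometrically decaying choice of $(\eps_k)$. The dissipative remainders are absorbed, again by Young with $\xi$-dependent parameters, into the base dissipation $|B^s\widehat U|^2$ (lowering $\eps_k$ once more) and fractions of the good terms. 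The net outcome is that, for such constants, there is $c>0$ with
\[
\tfrac{d}{dt}\L+c\sum_{k=0}^{K}w_k(\xi)\,|B^sL_\omega^k\widehat U|^2\leq 0,\qquad w_0\equiv 1 .
\]
By Lemma~\ref{lemma:equiv} the left-hand sum is $\geq c|\xi|^{-2\alpha}|\widehat U|^2$ for $|\xi|\geq1$ and $\geq c|\xi|^{2\beta}|\widehat U|^2$ for $|\xi|\leq1$; combined with $\L\sim|\widehat U|^2$ this yields $\tfrac{d}{dt}\L+c\,\phi(\xi)\,\L\leq 0$, and Gr\"onwall gives the claimed pointwise bound.

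Finally I would integrate in $\xi$. For the high-frequency part with $\alpha\geq1$, using $e^{-c|\xi|^{-2\alpha}t}\leq C_\gamma|\xi|^{2\gamma\alpha}t^{-\gamma}$ for $t\geq1$ and $|\widehat U(t,\xi)|\leq|\widehat{U_0}(\xi)|$ for $t\leq1$,
\[
\|U^h(t)\|_{L^2}^2\lesssim\int_{|\xi|\geq1}e^{-c|\xi|^{-2\alpha}t}|\widehat{U_0}(\xi)|^2\,d\xi\lesssim(1+t)^{-\gamma}\int_{|\xi|\geq1}|\xi|^{2\gamma\alpha}|\widehat{U_0}(\xi)|^2\,d\xi\lesssim(1+t)^{-\gamma}\|U_0\|_{H^{\gamma\alpha}}^2 ,
\]
which is the assertion; if $\alpha=0$ then $\phi\equiv1$ on $|\xi|\geq1$ and one gets the stated exponential estimate with $\|U_0\|_{L^2}$ on the right. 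For the low-frequency part with $\beta\geq1$, using $\|\widehat{U_0}\|_{L^\infty}\leq\|U_0\|_{L^1}$ and the scaling bound $\int_{|\xi|\leq1}e^{-c|\xi|^{2\beta}t}\,d\xi\lesssim(1+t)^{-1/(2\beta)}$,
\[
\|U^\ell(t)\|_{L^2}^2\lesssim\|\widehat{U_0}\|_{L^\infty}^2\int_{|\xi|\leq1}e^{-c|\xi|^{2\beta}t}\,d\xi\lesssim(1+t)^{-\frac{1}{2\beta}}\|U_0\|_{L^1}^2 ,
\]
while for $t\leq1$ the bound $\|U^\ell(t)\|_{L^2}\leq\|U_0\|_{L^2}$ closes the estimate, so that $\|U^\ell(t)\|_{L^2}\lesssim(1+t)^{-1/(4\beta)}\|U_0\|_{L^2\cap L^1}$; if $\beta=0$ one obtains again exponential decay. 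All the a priori estimates are first derived for smooth, rapidly decaying data and extended by density.

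The hard part will be the construction and closure of the Lyapunov functional in the second step. In the symmetric case $B^a=0$ the operator $\i\xi A$ carries the single frequency scale $|\xi|$, the weights $|\xi|^{-2k}$ absorb it completely, there is no loss ($\alpha=0$), and the bookkeeping reduces to that of \cite{BZ}; in the present inhomogeneous setting the transport remainder ties level $k$ to levels $k\pm1$ with the mismatched scales of $\i\xi A$ and $B^a$, and arranging the weights $w_k(\xi)$ and the hierarchy $(\eps_k)$ so that every remainder is absorbed --- the ``inhomogeneous hypocoercivity'' bookkeeping alluded to in the introduction --- is delicate. Once the pointwise decay $|\widehat U(t,\xi)|^2\lesssim e^{-c\phi(\xi)t}|\widehat{U_0}(\xi)|^2$ is in hand, the frequency integration is routine.
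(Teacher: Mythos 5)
Your proposal is correct and follows essentially the same route as the paper: the same perturbed energy functional with weights $|\xi|^{-2k}$ (resp. $1$) in high (resp. low) frequencies, the same Young/Cayley--Hamilton absorption with a super-geometric hierarchy $\eps_k=\eps^{m_k}$, the reduction to Lemma \ref{lemma:equiv}, Gr\"onwall, and then integration in frequency. The only cosmetic difference is that the paper phrases the argument through the Littlewood--Paley blocks $\dot\Delta_j$ rather than pointwise in $\xi$, and it explicitly records in Remarks \ref{noLPHF} and \ref{noLPHF2} that the frequency-pointwise version you wrote is an equivalent formulation.
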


The proof of Theorem \ref{thm:decayKalman} is carried out in Section \ref{sec:3}. The main idea is the construction of a suitable Lyapunov functional in the spirit of \cite{BZ}. Compared to the formula \eqref{LyaSym}, in high frequencies, the functional reads 
\begin{align}\label{LyaNonSymIntroHF}
\L^h(\xi,t)=\dfrac{1}{2}|\widehat{U}(\xi,t)|^2+\sum_{k=1}^{K}\eps_k|\xi|^{-2k}\textrm{Re}\l B^s(\i \xi A+B^a)^{k-1}\widehat U(\xi,t),B^s(\i \xi A+B^a)^k\widehat U(\xi,t)\r,
\end{align}
and, in low frequencies, 
\begin{align}\label{LyaNonSymIntroBF}
\L^\ell(\xi,t)=\dfrac{1}{2}|\widehat{U}(\xi,t)|^2+\sum_{k=1}^{K}\eps_k\textrm{Re}\l B^s(\i \xi A+B^a)^{k-1}\widehat U(\xi,t),B^s(\i \xi A+B^a)^k\widehat U(\xi,t)\r.
\end{align}
Taking their time-derivative and choosing the $\varepsilon_k$ small enough, one gets
\[
\frac{d}{dt}\L^h + c_1 \sum_{k=0}^{K}|\xi|^{-2k}|B^s(\i \xi A+B^a)^k \widehat{U}|^2 \leq 0,
\]
and
\[
\frac{d}{dt}\L^\ell + c_2 \sum_{k=0}^{K}|B^s(\i \xi A+B^a)^k \widehat{U}|^2 \leq 0,
\]
for constants $c_1,c_2>0$. At which point, thanks to Lemma \ref{lemma:equiv} and to the equivalence of $\L^{h}$ and $\L^\ell$ to the energy $|\widehat{U}(\xi,t)|^2$, one concludes by Gr\"onwall's arguments.

\begin{remark}
Theorem \ref{thm:decayKalman} is a direct extension of the approach developed in \cite{BZ} to the non-symmetric relaxation case. When $B^a=0$, one has $\alpha=0$ and $\beta=1$ leading to the decay rates obtained in the symmetric case \eqref{decay} and \eqref{decayhf}. 
\end{remark}
\begin{remark}\label{rq:nonlin}
Our approach can be employed to derive a priori estimates for nonlinear systems, such as the Timoshenko system with or without memory (see Section \ref{sec:applications}), and justify global-in-time well-posedness and time-asymptotic results for initial data close to constant equilibrium.  Indeed, for nonlinear systems whose linearization around equilibrium satisfies the inhomogeneous Kalman rank condition, such results can be established using classical energy methods and a bootstrap argument.
 A particular instance of this can be found in \cite{CBPSX} where techniques similar to the ones developed in the present paper are applied to deal with the nonlinear Euler-Maxwell system, whose high-frequency loss of regularity exponent is $\alpha=1$.
 \end{remark}

\begin{example}[Decay estimates for the linear Sugimoto model \eqref{sugimoto_lin}] 
\label{ex:sugi}
Theorem \ref{thm:decayKalman} can be directly applied to the Sugimoto model \eqref{sugimoto_lin}. First, we verify the inhomogeneous Kalman condition \eqref{condition:span}. We have
\[
B^s(i\xi A + B^a) = 
\left[\begin{array}{ccc}
    0 & 0 &0 \\
    -\Omega &0 &\omega \\
    0 &0 &0
\end{array} \right] \quad \text{ and } \quad 
B^s(i\xi A + B^a)^2 = \left[\begin{array}{ccc}
    0 & 0 &0 \\
    -a \eps \Omega \i \xi  & -\eps (\Omega^2 + \omega^2) &0 \\
    0 &0 &0
\end{array} \right].
\]
It is then straightforward to observe that \eqref{condition:span} fails for $K=1$ but holds for $K=2$. Consequently, Theorem \ref{thm:decayKalman} entails that the solution decays asymptotically, with a rate given by Lemma \ref{lemma:equiv}. In high frequencies, there exists a $C>0$ such that
\begin{align*}
    \sum_{k=0}^2 |\xi|^{-2k}|B^s(i\xi A + B^a)^k\hat{U}|^2 &= |\hat{\varphi}|^2 + \frac{1}{|\xi|^2}|-\Omega \hat{u} + \omega \hat{p}|^2 + \frac{1}{|\xi|^4}|-a \eps \Omega i \xi \hat{u} - \eps(\Omega^2 + \omega^2) \hat{\varphi}|^2 \\ &\geq C|\hat{\varphi}|^2 + \frac{C}{|\xi|^2}|\hat{u}|^2 + \frac{C}{|\xi|^2}|\hat{p}|^2 \\
    &\geq \frac{C}{|\xi|^2}|\hat{U}|^2,
\end{align*}
whereas in low frequencies, 
\[
\sum_{k=0}^2 |B^s(i\xi A + B^a)^k\hat{U}|^2 = |\hat{\varphi}|^2 + |-\Omega \hat{u} + \omega \hat{p}|^2 + |-a \eps \Omega i \xi \hat{u} - \eps(\Omega^2 + \omega^2) \hat{\varphi}|^2 \geq C|\xi|^2|\hat{U}|^2.
\]
With the notation of Theorem \ref{thm:decayKalman}, we thus have $\alpha=1$ and $\beta=1$, implying
\begin{align*}
    \|(u,\varphi,p)^h\|_{L^2} \leq C(1+t)^{-{\frac{\gamma}{2}}}\|(u_0,\varphi_0,p_0)\|_{H^{\gamma}}
\end{align*}
and
\begin{align*}
    \|(u,\varphi,p)^\ell\|_{L^2} \leq C(1+t)^{-\frac{1}{4}}\|(u_0,\varphi_0,p_0)\|_{L^2\cap L^1},
\end{align*}
which corresponds to the decay estimates expected from the spectral analysis of the model. Combined with standard commutator estimates to deal with the nonlinearity, the computations in the proof of Theorem \ref{thm:decayKalman} provide a way to justify a global well-posedness result for the Sugimoto model for initial data $(u_0,\varphi_0,p_0)$ being sufficiently small in $L^1\cap H^{s}$ for $s>d/2+3$. Indeed, such a regularity assumption on the initial data ensures the $L^1$-in-time integrability of the Lipschitz norm of the solution.
\end{example}

Our computations actually lead to sharper decay estimates in Besov-type norms.
\begin{theorem}
    Let $K\geq 1$ and assume that $(B^s,A,B^a)$ satisfies the inhomogeneous Kalman rank condition of order $K$. Let $\alpha$ and $\beta$ be the smallest integers such that \eqref{decay:alpha} and \eqref{decay:beta} hold, and assume that the initial data satisfies $U_0\in \dot{B}^{-\sigma}_{2,\infty} \cap \dot{B}_{2,\infty}^{\gamma\alpha}$ for $\sigma>0$ and $\gamma>0$. Then, there exists a constant $C>0$ such that
    \begin{enumerate}
        \item In high frequencies: if $\alpha\geq 1$, the solution of \eqref{eq:main} satisfies
    \begin{align}
    \|U^h\|_{L^2} \leq C(1+t)^{-\frac{\gamma}{2}}\|U_0\|_{L^2\cap \dot{B}_{2,\infty}^{\gamma\alpha}}.
\end{align}
\item In low frequencies: if $\beta\geq 1$, the solution of \eqref{eq:main} satisfies
\begin{align}
    \|U^\ell\|_{L^2} \leq C(1+t)^{-\frac{\sigma}{2\beta}}\|U_0\|_{\dot{B}^{-\sigma}_{2,\infty}}.
\end{align}
 \end{enumerate}
 As in Theorem \ref{thm:decayKalman}, for $\alpha=0$ and $\beta=0$, exponential decay is recovered in the corresponding regimes.
\end{theorem}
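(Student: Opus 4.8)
The plan is to refine the frequency-by-frequency estimates already obtained in the proof of Theorem~\ref{thm:decayKalman}: instead of integrating Gr\"onwall's inequality globally, I would keep the pointwise-in-frequency exponential bound and then perform a Littlewood--Paley summation tailored to the anisotropic weights $|\xi|^{-2\alpha}$ and $|\xi|^{2\beta}$; this is precisely what turns the hypothesis $U_0\in\dot B^{\gamma\alpha}_{2,\infty}\cap\dot B^{-\sigma}_{2,\infty}$ into the stated polynomial rates.

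\textbf{Step 1: pointwise-in-$\xi$ decay.} Recall from Section~\ref{sec:3} that, for the $\eps_k$ chosen small enough, the functionals $\L^h,\L^\ell$ of \eqref{LyaNonSymIntroHF}--\eqref{LyaNonSymIntroBF} satisfy $\L^h(\xi,t)\sim|\widehat U(\xi,t)|^2$ for $|\xi|\ge1$, $\L^\ell(\xi,t)\sim|\widehat U(\xi,t)|^2$ for $|\xi|\le1$, together with $\tfrac{d}{dt}\L^h+c_1\sum_{k=0}^K|\xi|^{-2k}|B^s(\i\xi A+B^a)^k\widehat U|^2\le0$ and the analogous inequality for $\L^\ell$. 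Inserting Lemma~\ref{lemma:equiv}(i) into the dissipation and using $\L^h\sim|\widehat U|^2$, one gets, when $\alpha\ge1$, $\tfrac{d}{dt}\L^h+c|\xi|^{-2\alpha}\L^h\le0$ for $|\xi|\ge1$, hence $|\widehat U(\xi,t)|^2\lesssim e^{-c|\xi|^{-2\alpha}t}|\widehat U_0(\xi)|^2$ there; symmetrically, Lemma~\ref{lemma:equiv}(ii) yields $|\widehat U(\xi,t)|^2\lesssim e^{-c|\xi|^{2\beta}t}|\widehat U_0(\xi)|^2$ for $|\xi|\le1$ when $\beta\ge1$. If $\alpha=0$ (resp. $\beta=0$), Lemma~\ref{lemma:equiv} bounds the dissipation below by $c|\widehat U|^2$ on $|\xi|\ge1$ (resp. $|\xi|\le1$), so $|\widehat U(\xi,t)|^2\lesssim e^{-ct}|\widehat U_0(\xi)|^2$ and integration in $\xi$ already gives the exponential statements.

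\textbf{Step 2: dyadic summation.} In the high-frequency regime with $\alpha\ge1$, I would localize in homogeneous dyadic blocks: since $|\xi|^{-2\alpha}\gtrsim2^{-2\alpha j}$ on the support of $\widehat{\dot\Delta_j U}$, Plancherel and Step~1 give $\|\dot\Delta_j U(t)\|_{L^2}\lesssim e^{-c'2^{-2\alpha j}t}\|\dot\Delta_j U_0\|_{L^2}$ for $j\ge0$, while $\|\dot\Delta_j U_0\|_{L^2}\le2^{-\gamma\alpha j}\|U_0\|_{\dot B^{\gamma\alpha}_{2,\infty}}$ by definition of the Besov norm. Summing over $j\ge0$,
\[
\|U^h(t)\|_{L^2}^2\lesssim\|U_0\|_{\dot B^{\gamma\alpha}_{2,\infty}}^2\sum_{j\ge0}2^{-2\gamma\alpha j}e^{-2c'2^{-2\alpha j}t}.
\]
The result then rests on the elementary bound $\sum_{j\ge0}2^{-2\gamma\alpha j}e^{-a2^{-2\alpha j}t}\lesssim(1+t)^{-\gamma}$, which I would prove by splitting at the critical index $j_0$ defined by $2^{-2\alpha j_0}\sim t^{-1}$: for $j\ge j_0$ the exponential is $O(1)$ and the geometric tail sums to $2^{-2\gamma\alpha j_0}\sim t^{-\gamma}$; for $j<j_0$, setting $m=j_0-j$ makes the generic term $\lesssim t^{-\gamma}2^{2\gamma\alpha m}e^{-a2^{2\alpha m}}$, which is summable in $m$. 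Together with $\|U^h\|_{L^2}\le\|U_0\|_{L^2}$ for $t\le1$, this gives $\|U^h(t)\|_{L^2}\lesssim(1+t)^{-\gamma/2}\|U_0\|_{L^2\cap\dot B^{\gamma\alpha}_{2,\infty}}$. The low-frequency statement is proved identically: replacing $2^{-2\alpha j}$ by $2^{2\beta j}$, summing over $j\le0$, and using $\|\dot\Delta_j U_0\|_{L^2}\le2^{\sigma j}\|U_0\|_{\dot B^{-\sigma}_{2,\infty}}$ yields $\|U^\ell(t)\|_{L^2}^2\lesssim\|U_0\|_{\dot B^{-\sigma}_{2,\infty}}^2\sum_{i\ge0}2^{-2\sigma i}e^{-a2^{-2\beta i}t}\lesssim(1+t)^{-\sigma/\beta}\|U_0\|_{\dot B^{-\sigma}_{2,\infty}}^2$, where $\sum_{i\ge0}2^{-2\sigma i}<\infty$ also controls the value at $t=0$, so no extra $L^2$ assumption is needed in this regime.

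\textbf{Main obstacle.} There is no deep difficulty here, since the dissipation mechanism is exactly that of Theorem~\ref{thm:decayKalman}, but two points require care. First, Step~1 needs the equivalences $\L^h\sim|\widehat U|^2$ on $\{|\xi|\ge1\}$ and $\L^\ell\sim|\widehat U|^2$ on $\{|\xi|\le1\}$ to hold uniformly in $\xi$; this is exactly where the weights $|\xi|^{-2k}$ in $\L^h$ are essential, as they keep the cross terms bounded precisely in high frequencies. Second, the dyadic sum must be split at the right frequency: bounding each block by $\sup_j\bigl(2^{-2\gamma\alpha j}e^{-a2^{-2\alpha j}t}\bigr)$ would cost a spurious factor $\log t$, and one genuinely needs the $m=j_0-j$ resummation to recover the clean power. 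A harmless technical point is the transition zone $|\xi|\sim1$, where both weights are of order $1$ and the high- and low-frequency estimates overlap, so that the finitely many intermediate blocks can be absorbed into either regime.
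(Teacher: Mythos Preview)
Your proposal is correct and follows exactly the route the paper has in mind: the theorem is stated there without a separate proof, as a direct sharpening of the computations in Section~\ref{sec:3}, and your argument is precisely that adaptation---use the block-wise Gr\"onwall bound $\|\dot\Delta_j U(t)\|_{L^2}\lesssim e^{-c2^{\mp 2\alpha j}t}\|\dot\Delta_j U_0\|_{L^2}$ already established there, insert the Besov control $\|\dot\Delta_j U_0\|_{L^2}\le 2^{-\gamma\alpha j}\|U_0\|_{\dot B^{\gamma\alpha}_{2,\infty}}$ (resp.\ $2^{\sigma j}\|U_0\|_{\dot B^{-\sigma}_{2,\infty}}$), and sum via the index shift $j\mapsto j_0-j$, which is the same device the paper uses in \eqref{high1}. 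The only cosmetic difference is that the paper works throughout with the Littlewood--Paley localized functionals $\L_j^{h},\L_j^{\ell}$, whereas you start from the pointwise-in-$\xi$ functionals of Remarks~\ref{noLPHF}--\ref{noLPHF2} and then localize; the two are equivalent here.
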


\subsection{Limitations of Kalman-based analysis}
\label{sub:identify}
The main limitation of Theorem \ref{thm:decayKalman} resides in the fact that it is not easy to estimate the coefficients $\alpha$ and $\beta$ appearing in Lemma \ref{lemma:equiv}. Indeed, while for simple systems it may be possible to deduce the optimal value for $\alpha$ and $\beta$, if the system becomes too large, the computation quickly becomes prohibitive. Moreover, the issue can be even more subtle, as the following example shows.

\subsubsection{A system with cancellation for specific coefficients} Consider the toy-model
\begin{equation} \label{2by2}
\begin{dcases}
    \partial_t u +a\partial_x u -v + u = 0, \\
    \partial_t v +b\partial_xv+u = 0.
\end{dcases}  
\end{equation}
This system certainly falls into the non-symmetric class, as
\[ A = 
\left[\begin{array}{cc}
    a & 0  \\
    0 & b 
\end{array} \right], \quad B^a = 
\left[\begin{array}{cc}
     0 &1 \\
     -1 &0
\end{array} \right]\quad \text{and }\quad
B^s = 
\left[\begin{array}{cc}
    1 & 0  \\
    0 & 0
\end{array} \right].
\]
\noindent Furthermore, one can easily check the basic energy equality
\[
\frac{1}{2}\frac{d}{dt}(|\widehat{u}|^2+ |\widehat{v}|^2) + |\widehat{u}|^2 = 0,
\]
showing partial dissipation. Let us work in high frequencies for the time being. In order to recover dissipation for the $v$ variable, one could simply multiply the first equation by $-\widehat{v}$ in Fourier space, obtaining
\begin{equation*}
\begin{aligned}
&-\frac{d}{dt}{\rm Re}\,\l \widehat{u},\widehat{v}\r+ |\widehat{v}|^2-|\widehat{u}|^2+{\rm Re}\l \widehat{u},\widehat{v}\r ={\rm Re}\,(a \l i\xi \widehat{u},\widehat{v}\r+b\l \widehat{u},i\xi \widehat{v}\r).
\end{aligned}
\end{equation*}
At this point, an interesting situation arises. If $a \neq b$, then the right-hand side is nonzero. Therefore, in order to control it we need to multiply both sides by $|\xi|^{-2}$, recovering the control of $|\xi|^{-2}|\widehat{v}|^2$ and thus effectively losing a derivative. However, if $a = b$, the right-hand side vanishes and no loss of a derivative occurs. The final rate of decay using classical Lyapunov theory reads
\begin{equation*}
\begin{aligned}
& \Vert (u,v,w)^h(t) \Vert_{L^{2}} \le C e^{-ct}\Vert (u_0,v_0,w_0)\Vert_{L^2}, &&\quad \text{ if } a = b
\end{aligned}
\end{equation*}
and
\begin{equation*}
\begin{aligned}
&\Vert  (u,v,w)^h(t) \Vert_{L^2} \le C t^{-\frac{\gamma}{2}} \Vert( u_0,v_0,w_0)\Vert_{H^{\gamma}},  &&\quad \text{ if } a \neq b.
\end{aligned}
\end{equation*}
Capturing such behaviour using our Kalman analysis does not appear to be possible even for such a simple system. Indeed, Lemma \ref{lemma:equiv} leads to the exponent $\alpha=1$ as
\[
|B^s\widehat{U}|^2 + \frac{1}{|\xi|^2}|B^s(iA\xi + B^a)\widehat{U}|^2 = |\widehat{u}|^2 + \frac{1}{|\xi|^2}|i a \xi \widehat{u} - v|^2 \geq \frac{c}{|\xi|^2}(|\widehat{u}|^2 + |\widehat{v}|^2).
\] 
Since the parameter $b$ does not appear in the above equation nor in the Kalman matrix, the norm recovered by our Kalman analysis \emph{cannot} capture the cancellation. Hence the inhomogeneous Kalman condition cannot predict the sharp decay rate.

\begin{remark} In this case, the improvement in the decay rates actually comes from the fact that if $a=b$ in \eqref{2by2} then by a Galilean change of frame $(x,t)\rightarrow (x-at,t)$, the system reduces to an ODE for which exponential decay holds. Nevertheless, the above computations still show the lack of efficiency of the inhomogeneous Kalman condition to observe such cancellation. In Section \ref{sec:applications}, we provide additional examples for which there does not exist a transformation of the space-time frame (to the best of our knowledge) capable of simplifying the system.
\end{remark}

\subsection{Improving the Kalman approach}
The issues encountered by our Kalman analysis in estimating $\alpha$ and $\beta$ can be seen as an unfortunate -- yet natural -- consequence of what makes the technique so general. Indeed, it is due to the complexity of the Lyapunov functional \eqref{LyaNonSymIntroHF}-\eqref{LyaNonSymIntroBF}, which collects, for every $k=1,\ldots,N-1$, all the Kalman-type interaction between the inhomogeneous operators $B^s(i\xi A+B^a)^k$ and $B^s(i\xi A+B^a)^{k+1}$. In practice, most of these interactions are useless, in the sense that they do not provide additional decay information, and only render the computation of $\alpha$ and $\beta$ more tedious. In that regard, our aim is to simplify the Lyapunov functionals by only keeping the useful terms.
 

The second part of the paper, starting from Section \ref{sec:4}, is dedicated to tackling this issue by developing a form of \textit{inhomogeneous hypocoercivity}. Specifically, we design an algorithm to construct the Lyapunov functionals so that, in the end, a sharp estimate for the decay rate of the solution can be easily computed. With respect to \eqref{LyaNonSymIntroHF}-\eqref{LyaNonSymIntroBF}, these new functionals are much simpler and contain only a few terms. Furthermore, as a byproduct of our algorithm, we provide explicit, algebraic conditions on $A$, $B^a$ and $B^s$ which, if satisfied, produce a cancellation which improves the final decay rate, in the spirit of the system \eqref{2by2}. Importantly, we observe that to capture such cancellations in the estimates, interactions that are not of Kalman-type must be used.

The main idea behind the algorithm is to select, at every step, a new term to add to the functional, which recovers the dissipation for a new variable (or combination of variables). This is done by charting a path on a binary tree, whose nodes are suitable products of $B^s, A$ and $B^a$ (see Figure \ref{fig:tree}). For every node, a rank condition is checked to decide whether adding the term associated with the current node will recover dissipation for a new variable in the final computations. Once sufficient nodes have been selected, there is no more dissipation to be gained, and the algorithm stops. The final functional built this way is guaranteed to decay with a quantifiable rate and, being equivalent to the energy of the system, entails the result.

\begin{figure}[!ht]
    \begin{tikzpicture}[scale=1]
    [level distance=55mm,
        every node/.style={fill=green!20,inner sep=1pt},
        level 1/.style={sibling distance=40mm,nodes={fill=red!20}},
        level 2/.style={sibling distance=45mm,nodes={fill=red!20}}]
    \node[fill=green!20,circle] {$B^s$}
        child[sibling distance=40mm]{node[fill=green!20]{$B^sA$}
            child[sibling distance=20mm]{node[fill=green!20]{$B^sA^2$}}
            child[sibling distance=20mm]{node[fill=green!20]{$B^sAB^a$}}}
        child[sibling distance=40mm]{node[fill=green!20]{$B^sB^a$}
            child[sibling distance=20mm]{node[fill=green!20]{$B^sB^aA$}}
            child[sibling distance=20mm]{node[fill=green!20]{$B^s(B^a)^2$}}};
    \end{tikzpicture}
        \caption{The first few nodes of the tree $\mathsf{T}$.}
        \label{fig:tree}
    \end{figure}
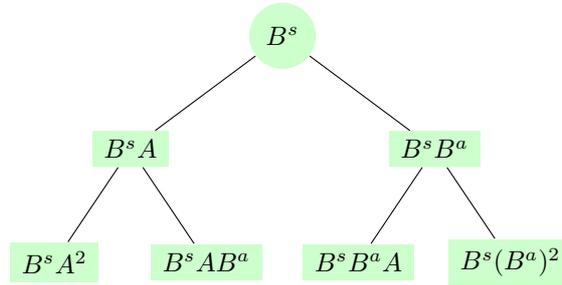

\section{Proof of Theorem \ref{thm:decayKalman}}
\label{sec:3}

\noindent
To analyze the decay of the solution, we employ the Littlewood-Paley decomposition which is convenient for dividing the frequency space into low and high frequencies. Moreover, it allows us to derive sharp decay estimates in Besov spaces. Note that the computations below can be performed in a standard frequency-pointwise manner (with more cumbersome notations); the corresponding computations are described in Remarks \ref{noLPHF} and \ref{noLPHF2}.

\subsection{Littlewood-Paley decomposition}

 Throughout the paper, we fix a homogeneous Littlewood-Paley localisation operator $(\ddj)_{j\in\Z}$
that is defined  by 
$$\ddj\triangleq\mathcal{F}^{-1}(\varphi(2^{-j}\cdot )\mathcal{F}u),\quad\varphi(\xi)\triangleq \chi(\xi/2)-\chi(\xi)$$
where $\chi=\chi(\xi)$ stands for a smooth radial non-increasing function with range in $[0,1],$ supported in  $]-4/3,4/3[$ and
such that $\chi\equiv1$ on $[-3/4,3/4]$.  This definition ensures that, for any tempered distribution $f\in \mathcal{S}'(\R)$, the support of the Fourier transform of $\ddj f$ is localized in an annulus, i.e. there exist two constants $c>0$ and $C>0$ such that
\begin{align}
    \textrm{supp}\left(\widehat{\ddj f}\right)\subset \{\frac{3}{4}2^j\leq |\xi|\leq {\frac{8}{3}}2^{j}\}.
\end{align}
The Littlewood–Paley decomposition of a general tempered distribution
$$
f=\sum_{j\in\mathbb{Z}}\dot{\Delta}_j f.
$$
The above equality holds only in the subset $S_h'(\R)$ of $S'(\R)$ modulo polynomials. To simplify the notation, for any $f\in \mathcal{S}'(\R)$ and $j\in \Z$, we note
\begin{align}
    \ddj f:=f_j.
\end{align}
In the paper, we will use repeatedly the Bernstein property stating that for such localized distribution, the differentiation operator acts as a homothety. 
\begin{lemma}[Bernstein properties]\label{lem:Bernstein} For $f\in \mathcal{S}_h'(\R)$ and $j\in\mathbb{Z}$, we have
\begin{align}
  \|\partial_xf_j\|_{L^2} \sim  2^{j}\|f_j\|_{L^2}.
\end{align}
Moreover, for $f=f^h+f^\ell$ with $f^h=\mathcal{F}^{-1}(\mathbf{1}_{|\xi|\geq 1}\mathcal{F}f)$ and $f^\ell=\mathcal{F}^{-1}(\mathbf{1}_{|\xi|\geq 1}\mathcal{F}f)$, it holds that
 \begin{align}
    \|f^h_j\|_{L^2} \leq C  2^{j}\|f^h_j\|_{L^2}\quad\text{and}\quad \|f^\ell_j\|_{L^2} \leq C 2^{-j}\|f^\ell_j\|_{L^2}
\end{align}
for some uniform constant $C>0$.
\end{lemma}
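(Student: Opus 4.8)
The statement is purely spectral, so the plan is to reduce everything to Plancherel's identity together with the localisation $\mathrm{supp}\,\widehat{f_j}\subset\mathcal A_j:=\{\tfrac34 2^j\le|\xi|\le\tfrac83 2^j\}$ recorded just above: on $\mathcal A_j$ the symbol $|\xi|$ (equivalently $\i\xi$, the symbol of $\partial_x$) is comparable to $2^j$ with absolute constants, so inside any integral $\int|\cdot|^2\,d\xi=\|\cdot\|_{L^2}^2$ one may replace $|\xi|$ by $2^j$ at the price of fixed constants.

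First I would prove the dyadic identity. Writing $\widehat{\partial_x f_j}(\xi)=\i\xi\,\widehat{f_j}(\xi)$ and using $\tfrac34 2^j\le|\xi|\le\tfrac83 2^j$ on the support of $\widehat{f_j}$, I obtain
\[
\big(\tfrac34 2^j\big)^2\int_{\mathcal A_j}|\widehat{f_j}|^2\,d\xi\ \le\ \int_{\mathcal A_j}|\xi|^2|\widehat{f_j}|^2\,d\xi\ \le\ \big(\tfrac83 2^j\big)^2\int_{\mathcal A_j}|\widehat{f_j}|^2\,d\xi,
\]
which by Plancherel reads $2^{2j}\|f_j\|_{L^2}^2\lesssim\|\partial_x f_j\|_{L^2}^2\lesssim 2^{2j}\|f_j\|_{L^2}^2$, that is $\|\partial_x f_j\|_{L^2}\sim 2^j\|f_j\|_{L^2}$. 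Here the lower bound genuinely uses that $\mathcal A_j$ is an \emph{annulus}, not merely a ball.

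Then I would handle the truncated pieces, the key observation being that $f\mapsto f^h,f^\ell$ only \emph{restricts} spectral supports. Indeed $\widehat{f^h_j}=\varphi(2^{-j}\cdot)\,\mathbf 1_{\{|\xi|\ge 1\}}\,\widehat f$ is supported in $\mathcal A_j\cap\{|\xi|\ge 1\}$: this set is empty once $\tfrac83 2^j<1$, so $f^h_j\equiv 0$ for all $j$ below a fixed integer, while for the remaining $j$ it lies in $\mathcal A_j$, where still $|\xi|\sim 2^j\gtrsim 1$. Symmetrically $\widehat{f^\ell_j}$ is supported in $\mathcal A_j\cap\{|\xi|\le 1\}$, which is empty for $j$ above a fixed integer and contained in $\{|\xi|\le 1\}$ for the rest. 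Re-running the Plancherel estimate of the previous step on these (possibly proper) sub-annuli of $\mathcal A_j$ --- on which $|\xi|$ is still comparable to $2^j$ uniformly in $j$, since they sit inside $\mathcal A_j$ --- gives the asserted bounds for $f^h_j$ and $f^\ell_j$.

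I do not expect a genuine obstacle: the lemma is elementary and really a bookkeeping statement about the Littlewood--Paley conventions. The only points I would double-check are that the truncated blocks may be supported on a proper sub-annulus of $\mathcal A_j$ (harmless, as just noted) and that $f\mapsto f^h,f^\ell$ modifies $\ddj f$ only on the finite, $f$-independent set of ``transition'' indices $j$ with $1\in\mathcal A_j$ --- so that, up to this bounded number of blocks, $f^h$ is concentrated on $j\gtrsim 0$ and $f^\ell$ on $j\lesssim 0$, which is precisely what makes the powers $2^{\pm j}$ in the statement effectively one-sided.
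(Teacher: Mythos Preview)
Your argument is correct and is exactly the standard Plancherel-plus-support-localisation proof one expects for Bernstein inequalities. The paper does not actually prove Lemma~\ref{lem:Bernstein}; it is simply stated as a known fact about the Littlewood--Paley decomposition, so there is nothing to compare against beyond noting that your approach is the canonical one.
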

As a straightforward consequence of the Bernstein property and Lemma \ref{lemma:equiv}, we obtain the following.
\begin{lemma}
There exists a uniform constant $c>0$ such that, for $j\geq 0$,
\begin{align}\label{decay:alphaLP}
\sum_{k=0}^{K}2^{-2jk}\|B^s(A\partial_x+B^a)^kU_j\|_{L^2}^2 \geq c 2^{-2j\alpha}\|U_j\|_{L^2}^2,
\end{align}
 and, for $j\leq0$,
\begin{align}\label{decay:betaLP}
\sum_{k=0}^{K}\|B^s(A\partial_x+B^a)^kU_j\|_{L^2}^2 \geq c 2^{2j\beta}\|U_j\|_{L^2}^2,
\end{align}
where $\alpha$ and $\beta$ are given by Lemma \ref{lemma:equiv}.
\end{lemma}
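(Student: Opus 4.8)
The plan is to localise the frequency-pointwise bounds of Lemma \ref{lemma:equiv} onto the dyadic blocks by combining Plancherel's theorem with the elementary fact that on $\mathrm{supp}\,\widehat{U_j}\subset\{\tfrac{3}{4}2^j\le|\xi|\le\tfrac{8}{3}2^j\}$ one has $|\xi|\sim 2^j$. First I would record the comparison constants: there is $c_K>0$, depending only on $K$, with $2^{-2jk}\ge c_K|\xi|^{-2k}$ for every $0\le k\le K$ and every $\xi$ in that annulus (since $(|\xi|2^{-j})^{2k}\ge(3/4)^{2K}$ there), and likewise $|\xi|^{-2\alpha}\ge(8/3)^{-2\alpha}2^{-2j\alpha}$ and $|\xi|^{2\beta}\ge(3/4)^{2\beta}2^{2j\beta}$ on the annulus. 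Writing $\|B^s(A\partial_x+B^a)^kU_j\|_{L^2}^2=\int|B^s(\i\xi A+B^a)^k\widehat{U_j}(\xi)|^2\,d\xi$ and pulling these comparisons under the integral sign, the left-hand side of \eqref{decay:alphaLP} is bounded below by $c_K\int\sum_{k=0}^K|\xi|^{-2k}|B^s(\i\xi A+B^a)^k\widehat{U_j}|^2\,d\xi$, and analogously for \eqref{decay:betaLP}, where no frequency weight is present so the translation is even more direct.

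Next I would split according to $j$. For $j\ge 1$ the support annulus is contained in $\{|\xi|\ge 1\}$, so Lemma \ref{lemma:equiv}(i) applies pointwise in $\xi$ under the integral; combined with $|\xi|^{-2\alpha}\gtrsim 2^{-2j\alpha}$ and Plancherel, this yields \eqref{decay:alphaLP} with a constant independent of $j\ge 1$. Symmetrically, for $j\le -2$ the annulus lies in $\{|\xi|\le 1\}$ and Lemma \ref{lemma:equiv}(ii) gives \eqref{decay:betaLP} for all such $j$. The remaining indices are the finitely many boundary blocks, namely $j=0$ for \eqref{decay:alphaLP} and $j\in\{-1,0\}$ for \eqref{decay:betaLP}, whose support annulus is a fixed compact subset of $\R^*$ contained in neither $\{|\xi|\ge 1\}$ nor $\{|\xi|\le 1\}$. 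For these I would invoke the rank condition \eqref{condition:span} directly: it guarantees that the Kalman matrix $[B^s;\,B^s(\i\xi A+B^a);\dots;\,B^s(\i\xi A+B^a)^K]$ has full column rank for every $\xi\ne 0$, hence its smallest singular value is a continuous, strictly positive function of $\xi$ on $\R^*$ and therefore bounded below by a positive constant on each of these compact annuli. This gives $\sum_{k=0}^K\|B^s(A\partial_x+B^a)^kU_j\|_{L^2}^2\ge c_j\|U_j\|_{L^2}^2$, and since the dyadic factors $2^{-2jk}$, $2^{-2j\alpha}$, $2^{2j\beta}$ are all bounded above and below by absolute constants for these fixed values of $j$, both \eqref{decay:alphaLP} and \eqref{decay:betaLP} follow for the boundary blocks as well. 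Taking $c$ to be the minimum of the finitely many constants produced in the three regimes completes the argument.

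There is essentially no substantive obstacle; the proof is bookkeeping built on Plancherel and the Bernstein-type relation $|\xi|\sim 2^j$ on $\mathrm{supp}\,\widehat{U_j}$. The one point that needs care — and the reason the statement is not literally a corollary — is the mismatch between the Littlewood--Paley thresholds $\tfrac{3}{4}2^j$, $\tfrac{8}{3}2^j$ and the threshold $1$ appearing in Lemma \ref{lemma:equiv}, which forces the separate compactness treatment of the blocks $j\in\{-1,0\}$ rather than a direct appeal to the lemma.
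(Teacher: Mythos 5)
Your proof is correct and follows the route the paper intends: the lemma is stated there without proof as a ``straightforward consequence'' of the Bernstein property and Lemma \ref{lemma:equiv}, i.e.\ precisely the Plancherel plus $|\xi|\sim 2^j$ localisation you carry out. Your separate compactness treatment of the boundary blocks ($j=0$ for \eqref{decay:alphaLP} and $j\in\{-1,0\}$ for \eqref{decay:betaLP}), where the dyadic annulus straddles the threshold $|\xi|=1$, is a legitimate point that the paper glosses over, and your continuity-of-the-smallest-singular-value argument handles it correctly.
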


We are now ready to analyze the decay properties of \eqref{eq:main}. We focus on the high-frequency regime first.

\subsection{High-frequency decay estimates}

We have the following proposition.


\begin{proposition}[High-frequency decay]
\label{prophf}
Let $U^h=\mathcal{F}^{-1}(\mathbf{1}_{|\xi|\geq 1}\mathcal{F}U)$ and $j\in\mathbb{Z}$. There exists a high-frequency Lyapunov functional $\L^h_j\sim \|U_j^{h}\|_{L^2}^2$ such that for all $t>0$,
\begin{align}
    \dfrac{d}{dt}\L^h_j+c 2^{-2j\alpha}\L^h_j\leq 0.\label{LLY}
\end{align}
Consequently, for any $\gamma>0$ and $t>0$, we have
\begin{align}
    \|U^h\|_{L^2} \leq C(1+t)^{-\frac{\gamma}{2}}\|U_0\|_{H^{\gamma\alpha}}.\label{section2:high}
    \end{align}
    Here, $\alpha$ is given by Lemma \ref{lemma:equiv}, and $c, C>0$ are universal constants. 
\end{proposition}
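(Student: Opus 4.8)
The plan is to prove the block-wise inequality \eqref{LLY} by a hypocoercivity argument extending the one in \cite{BZ}, and then to sum the resulting exponential bounds against a Sobolev norm to reach \eqref{section2:high}. Fix $j\in\Z$; all norms below are those of $L^2(\R)$ taken on the localised profile $U_j$. Write $M:=A\de_x+B^a$ and introduce the Kalman components together with their dyadic rescalings
\[
V_k := B^s M^k U_j,\qquad \widetilde V_k := 2^{-jk}V_k\qquad(0\le k\le K),
\]
so that $V_0=B^sU_j$. Applying $B^sM^k$ to \eqref{eq:main} localised by $\ddj$ gives the commutation identity $\de_tV_k=-V_{k+1}-B^sM^kB^sU_j$, which drives the whole construction. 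Pairing the localised equation with $U_j$ and using that $A\de_x+B^a$ is skew-adjoint in $L^2$ while $B^s=\mathrm{diag}(0,D)$ with $D>0$ yields the energy identity
\[
\frac12\frac{d}{dt}\|U_j\|^2+\l B^sU_j,U_j\r=0,\qquad \l B^sU_j,U_j\r\ge c_0\|V_0\|^2 .
\]
I would then take
\[
\L^h_j:=\frac12\|U_j^h\|^2+\sum_{k=1}^{K}\eps_k\,2^{-2jk}\,\mathrm{Re}\,\l V_{k-1},V_k\r ,\qquad 0<\eps_K\ll\cdots\ll\eps_1\ll1,
\]
the Littlewood--Paley version of \eqref{LyaNonSymIntroHF}. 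Since the Bernstein property (Lemma~\ref{lem:Bernstein}) gives $\|V_k\|\lesssim 2^{jk}\|U_j\|$ on the relevant blocks $j\gtrsim0$, every cross term is $\lesssim\eps_k2^{-j}\|U_j\|^2$, so $\L^h_j\sim\|U_j^h\|^2$ as soon as $\sum_k\eps_k$ is small.

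The core step is to differentiate $\L^h_j$ and absorb the resulting cross terms. Using the commutation identity,
\begin{align*}
\frac{d}{dt}\mathrm{Re}\,\l V_{k-1},V_k\r &=-\|V_k\|^2-\mathrm{Re}\,\l V_{k-1},V_{k+1}\r\\
&\quad-\mathrm{Re}\,\l B^sM^{k-1}V_0,V_k\r-\mathrm{Re}\,\l V_{k-1},B^sM^{k}V_0\r .
\end{align*}
The key observation is that multiplying by $2^{-2jk}$ collapses the homogeneous term to $\l\widetilde V_{k-1},\widetilde V_{k+1}\r$: in the rescaled variables the inhomogeneous weights disappear and the algebra reduces \emph{exactly} to the symmetric case behind \eqref{LyaSym}. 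Thus $\eps_k\l\widetilde V_{k-1},\widetilde V_{k+1}\r$ is absorbed, via Young's inequality, into $\eps_{k+1}\|\widetilde V_{k+1}\|^2$ (for $k<K$) and into $\eps_{k-1}\|\widetilde V_{k-1}\|^2$ (into $\|V_0\|^2$ when $k=1$), provided the $\eps_k$ obey a discrete log-concavity condition of the form $\eps_k^2\le\delta\,\eps_{k-1}\eps_{k+1}$ --- possible since $k$ ranges over the finite set $\{1,\dots,K\}$ (for instance $\eps_k=\eps^{\,Kk-k^2/2}$ with $\eps$ small). The two remaining error terms carry the fully dissipated factor $V_0=B^sU_j$ and, again by Bernstein, satisfy
\[
2^{-2jk}\bigl(|\l B^sM^{k-1}V_0,V_k\r|+|\l V_{k-1},B^sM^kV_0\r|\bigr)\lesssim 2^{-j}\|V_0\|\bigl(\|\widetilde V_k\|+\|\widetilde V_{k-1}\|\bigr),
\]
which is harmless after a further Young split against $\|V_0\|^2$ and a small multiple of the good terms $\sum_l\|\widetilde V_l\|^2$. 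The only genuinely new point compared with the symmetric theory is the boundary index $k=K$, where $V_{K+1}$ shows up with no matching good term; here I would use the Cayley--Hamilton theorem for $M=\i\xi A+B^a$, whose characteristic coefficients are polynomials in $\xi$ of degree $\le n-l$, to write $V_{K+1}=-\sum_{l=0}^{K}p_l(\xi)V_l$ with $|p_l(\xi)|\lesssim 2^{j(K+1-l)}$ on a block $j\ge0$ --- we may assume $K\le n-1$, since the inhomogeneous Kalman condition, once it holds, holds at order $n-1$ --- so this term too is controlled by $\sum_l\|\widetilde V_l\|^2$. Collecting everything with $\eps_1$ (hence all the $\eps_k$) chosen small enough gives
\[
\frac{d}{dt}\L^h_j+c_1\sum_{k=0}^{K}2^{-2jk}\|B^sM^kU_j\|^2\le0 .
\]

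Finally, the Littlewood--Paley form \eqref{decay:alphaLP} of Lemma~\ref{lemma:equiv} gives, for $j\ge0$, $\sum_{k=0}^{K}2^{-2jk}\|B^sM^kU_j\|^2\ge c\,2^{-2j\alpha}\|U_j\|^2\gtrsim 2^{-2j\alpha}\L^h_j$, which is precisely \eqref{LLY}. Grönwall's lemma then yields $\|U_j^h(t)\|^2\lesssim e^{-c\,2^{-2j\alpha}t}\|U_j(0)\|^2$. Summing over $j$ gives $\|U^h(t)\|_{L^2}\lesssim e^{-ct}\|U_0\|_{L^2}$ when $\alpha=0$; and when $\alpha\ge1$, combining with $\|U_j(0)\|\lesssim 2^{-j\gamma\alpha}\|U_0\|_{H^{\gamma\alpha}}$ and the elementary estimate $\sum_{j\ge0}e^{-c\,2^{-2j\alpha}t}2^{-2j\gamma\alpha}\lesssim(1+t)^{-\gamma}$ (split the sum at $j^\ast$ with $2^{2j^\ast\alpha}\sim t$) produces \eqref{section2:high}. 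The few blocks near $|\xi|=1$ that $U^h$ meets cause no trouble: one may run the argument frequency-pointwise as indicated in the excerpt, or simply note that on those blocks $2^{-2jk}$ is bounded.

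\smallskip
\noindent I expect the main obstacle to be the middle step: keeping careful track of the $2^{-2jk}$ weights so as to recognise that the rescaled variables $\widetilde V_k$ restore the symmetric-case structure, choosing an $\eps_k$-hierarchy compatible with \emph{all} the Young splittings at once, and disposing of the top-order contribution $V_{K+1}$ through the ($\xi$-dependent) Cayley--Hamilton identity.
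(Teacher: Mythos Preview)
Your proposal is correct and follows essentially the same route as the paper's own proof: the same Lyapunov functional \eqref{eq:lyaph}, the same differentiation identity splitting into the good term $-\|V_k\|^2$, the cross term $\l V_{k-1},V_{k+1}\r$, and the $B^sU$-weighted remainders, and the same $\eps_k$-hierarchy (your condition $\eps_k^2\le\delta\,\eps_{k-1}\eps_{k+1}$ is exactly the paper's requirement $m_k\ge\tfrac{m_{k-1}+m_{k+1}}{2}+\delta$ for $\eps_k=\eps^{m_k}$). Your device of passing to the rescaled components $\widetilde V_k=2^{-jk}V_k$ makes the reduction to the symmetric-case algebra of \cite{BZ} explicit and is a clean way to organise the bookkeeping; and your handling of the top index via Cayley--Hamilton after replacing $K$ by $n-1$ (which is harmless since the Kalman condition persists at higher order and the lower bound \eqref{decay:alphaLP} only improves) is a tidy substitute for the paper's combined appeal to the Kalman condition and Cayley--Hamilton at \eqref{CH}.
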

\begin{proof}
Let $(\eps_k)_{1\leq k\leq K}$ be a sequence of positive real numbers to be chosen later. We define the high-frequency perturbed energy functional
\begin{align}
\label{eq:lyaph}
\L^h_j=\dfrac{1}{2}\|U_j^{h}\|_{L^2}^2+\sum_{k=1}^{K}\eps_k2^{-2jk}\bigl(B^s(A\partial_x+B^a)^{k-1}U^{h}_j,B^s(A\partial_x+B^a)^kU^{h}_j\bigl).
\end{align}
Here and in what follows, $\bigl( \cdot,\cdot \bigr)$ denotes the inner
product in $L^2$. Differentiating in time $\L^h_j$ and employing \eqref{partialdiss}, we obtain
\begin{align}
    \dfrac{d}{dt}\L^h_j+\kappa\|U^{h}_j\|_{L^2}^2+\sum_{k=1}^{K}\eps_k2^{-2jk}\|B^s(A\partial_x+B^a)^kU^{h}_j\|_{L^2}^2&\leq  I^j_1+I^j_2+I^j_3,
\end{align}
where 
\begin{equation*}
    \begin{dcases}        I^1_{j}=\sum_{k=1}^{K}\eps_k2^{-2jk}\bigl(B^s(A\partial_x+B^a)^{k-1}B^sU^{h}_j,B^s(A\partial_x+B^a)^kU^{h}_j\bigl),
\\
I^2_j=\sum_{k=1}^{K}\eps_k2^{-2jk}\bigl(B^s(A\partial_x+B^a)^{k-1}U^{h}_j,B^s(A\partial_x+B^a)^kB^sU^{h}_j\bigl),
\\
I^3_j=\sum_{k=1}^{K}\eps_k2^{-2jk}\bigl(B^s(A\partial_x+B^a)^{k-1}U^{h}_j,B^s(A\partial_x+B^a)^{k+1}U^{h}_j\bigl).
    \end{dcases}
\end{equation*}
Controlling $I^1_{j}$, $I^2_j$ and $I^3_j$ follows closely from \cite{BZ} and \cite{CBD2}. We adapt it to the inhomogeneous hypocoercivity framework as follows.
\medbreak $\bullet$ The terms $I^1_{k,j}:= 2^{-2jk}\eps_k\bigl(B^s(A\partial_x+B^a)^{k-1}B^s  U^{h}_j, B(A\partial_x+B^a)^k U^{h}_j\bigr)$
with $k\in\{1,\cdots, K\}$: Due to Bernstein properties in Lemma \ref{lem:Bernstein} and the fact that the matrices $A$, $B^s$ are bounded, we obtain
\begin{equation*}
\begin{aligned}
 |I^1_{k,j}| &\leq C\eps_k2^{-2jk}\|B^s\partial_x^{k-1}U^h_{j}\|_{L^2} \|B(A\partial_x+B^a)^k U^{h}_j\|_{L^2} 
\\&\leq \frac{\kappa}{4K}{\| B^sU^h_{j}\|_{L^2}^2}+\frac{C \eps_k^2}{\kappa}2^{-2jk}\| B(A\partial_x+B^a)^k  U^{h}_j\|_{L^2}^2.
\end{aligned}
\end{equation*}
\newline  $\bullet$  The term $I^2_{1,j}:=2^{-2j} \eps_1\bigl(B^sU^{h}_j, B^s(A\partial_x+B^a) B^sU^{h}_j\bigr)_{L^2}$: One has
\begin{equation}\nonumber
|I^2_{1,j}| \leq C2^{-2j}\eps_1 \|B^sU^{h}_j\|_{L^2}\| B^s(A\partial_x+B^a)B^sU^{h}_j\|_{L^2}\\
\leq \frac{\kappa}{4K} \|B^sU^{h}_j\|_{L^2}^2+ \frac{C\eps_1^2}{\eps_{0}} \|B^s\partial_xU^{h}_j\|_{L^2}^2.
\end{equation}
\newline  $\bullet$ The terms  $I^2_{k,j}:=\eps_k2^{-2jk}\bigl(B^s(A\partial_x+B^a)^{k-1}  U^{h}_j, B^s(A\partial_x+B^a)^k B^s \partial_xU^{h}_j\bigr)$ with  $k\in\{2,\cdots, K\}$ (if $n\geq3$): After integrating by parts, we have
$$\begin{aligned}
 |I^2_{k,j}|&=2^{-2jk}\eps_k |\bigl(B^s(A\partial_x+B^a)^{k-1}  \partial_x U^{h}_j, B^s(A\partial_x+B^a)^k B^s U^{h}_j\bigr)_{L^2} |\\&\leq C2^{-2jk}\eps_k \|B^s(A\partial_x+B^a)^{k-1} \partial_x U^{h}_j\|_{L^2}\|B^s \partial_x^kU^{h}_j\|_{L^2}\\
&\leq \frac{\kappa}{4K}{\| B^s U^{h}_j\|_{L^2}^2}+\frac{C\eps_{k-1}^2}{\kappa}2^{-2jk}\| B^s(A\partial_x+B^a)^{k-1} U^{h}_j\|_{L^2}^2.
\end{aligned}$$
\newline  $\bullet$ The terms $I^3_{j,k}:= \eps_k 2^{-2jk} \bigl( B^s(A\partial_x+B^a)^{k-1}  U^{h}_j,  B^s(A\partial_x+B^a)^{k+1}  U^{h}_j\bigr)_{L^2}$ with $k\in\{1,\cdots, K-1\}$ (if $n\geq3$): A similar argument yields
\begin{align*}
 |I^3_{k,j}|&=\eps_k 2^{-2jk}| \bigl( B^s(A\partial_x+B^a)^{k-1} U^{h}_j,  B^s(A\partial_x+B^a)^{k+1}  U^{h}_j\bigr)_{L^2}|
\\&\leq \frac{\eps_{k-1}}{8}  2^{-2j(k-1)}\|B^s(A\partial_x+B^a)^{k-1}  U^{h}_j\|_{L^2}^2 +\frac{C\eps_k^2}{\eps_{k-1}}2^{-2j(k+1)}  \| B^s(A\partial_x+B^a)^{k+1} U^{h}_j\|_{L^2}^2. 
\end{align*}
\newline  $\bullet$  The term $I^3_{K}:= 2^{-2jK}\eps_{K} \bigl( B^s(A\partial_x+B^a)^{K-1}  U^{h}_j,  B^s(A\partial_x+B^a)^{K+1} U^{h}_j\bigr)_{L^2}$:
Owing to the inhomogeneous Kalman rank condition and the Cayley–Hamilton theorem, there exist coefficients
$c_{*}^q$ ($q=0,1,...,K$) such that
\begin{align}
    &B^s(A\partial_x+B^a)^{K+1} = \sum_{q=0}^{K}  2^jc_{*}^j B^s(A\partial_x+B^a)^q.\label{CH}
\end{align}
Consequently, one gets
$$\begin{aligned}
|I^3_{K}|&\leq \eps_{K}2^{-2jK}\| B^s(A\partial_x+B^a)^{K-1}  U^{h}_j\|_{L^2}\sum_{q=0}^{K}2^{j}c_{*}^j 
\|B^s(A\partial_x+B^a)^q  U^{h}_j\|_{L^2} \\
&\leq  \frac{\eps_{K-1}}{8}2^{-2j(K-1)}\| B^s(A\partial_x+B^a)^{K-1}  U^{h}_j\|_{L^2}^2+\frac{C\eps_{K}^2}{\eps_{K-1}}2^{-2jK}\sum_{q=0}^{K} \| B^s(A\partial_x+B^a)^q  U^{h}_j\|_{L^2}^2
\end{aligned} $$
According to \cite{BZ}, we set $\varepsilon_k=\varepsilon^{m_k}$ with  $\varepsilon$ small enough and 
$m_1,\cdots,m_{K}$ satisfying for some $\delta>0$ (that can be taken arbitrarily small): 
$$m_k\geq \frac{m_{k-1}+m_{k+1}}2+\delta \quad \text{and} \quad
m_{K}\geq\frac{m_{k}+m_{K-1}}2+ \delta, \quad k=1,\cdots,K-1.$$
Gathering the above estimates, we deduce that there exists a constant $\eta>0$ such that
\begin{equation}
\label{eq:energyhf}
    \dfrac{d}{dt}\L^h_j  +\eta \sum_{k=0}^{K}2^{-2jk}\|B^s(A\partial_x+B^a)^kU^{h}_j\|_{L^2}^2 \leq 0.
\end{equation}
Moreover, for a suitably small sequence $\varepsilon$, we have 
$$
\L^h_j \sim \|U^{h}_{j}\|_{L^2}^2,
$$
and due to the property \eqref{decay:alpha} related to the inhomogeneous Kalman rank condition,
$$
\sum_{k=0}^{K}2^{-2jk}\|B^s(A\partial_x+B^a)^kU^{h}_j\|_{L^2}^2 \geq c2^{-2j\alpha}\|U^{h}_{j}\|_{L^2}^2.
$$
Therefore, 
\begin{align}
    \dfrac{d}{dt}\L^h_j + c 2^{-2j\alpha}\L^h_j\leq 0.\label{Lyapun:high}
\end{align}
 From this and Gr\"onwall's lemma, we deduce that
\begin{equation*}
\begin{aligned}
&\L^h_j(t)\lesssim e^{-c 2^{-2j\alpha} t}\L^h_j(0).
\end{aligned}
\end{equation*}
This leads to
\begin{equation}\label{high1}
\begin{aligned}
&\|U^{h}\|_{L^2}^2\sim \sum_{j\in\mathbb{Z}} \|\dot{\Delta}_j U^{h}\|_{L^2}^2\lesssim \sum_{j\in\mathbb{Z}} e^{-c 2^{-2j\alpha} t}\|\dot{\Delta}_{j}U_0^h\|_{L^2}^2\lesssim t^{-\gamma}\|U_0\|_{\dot{H}^{\gamma \alpha}}^2,
\end{aligned}
\end{equation}
where we used the fact that thanks to the transformation $j\rightarrow j'\in\mathbb{Z}$ such that $2^{-2j'\alpha}\leq 2^{-2j\alpha}t\leq 2^{-2j'\alpha+1}$, we have
\begin{equation*}
\begin{aligned}
\sum_{j\in\mathbb{Z}} 2^{-2\gamma \alpha j}e^{-c 2^{-2j\alpha} t} \leq 2 t^{-\gamma}\sum_{j'\in\mathbb{Z}} 2^{-2\gamma \alpha j'}e^{-c 2^{-2j'\alpha}}\lesssim t^{-\gamma}.
\end{aligned}
\end{equation*}
On the other hand, integrating \eqref{Lyapun:high} in time gives directly 
\begin{equation}\label{high2}
\begin{aligned}
&\sup_{t\geq0}\|U^{h}\|_{L^2}^2\lesssim \|U_0\|_{L^2}^2.
\end{aligned}
\end{equation}
Combining \eqref{high1}-\eqref{high2}, we get \eqref{section2:high}.

\end{proof}

\begin{remark}\label{noLPHF}
    Above we employed the Littlewood-Paley decomposition to simplify the proof. It is also possible to justify a similar result with frequency-pointwise functionals. For $|\xi|\geq1$, consider
\begin{align*}
\L^h(\xi,t)=\dfrac{1}{2}|U(\xi,t)|^2+\sum_{k=1}^{K}\eps_k|\xi|^{-2k}\textrm{Re}\l B^s(\i \xi A+B^a)^{k-1}\widehat U(\xi,t),B^s(\i \xi A+B^a)^k\widehat U(\xi,t)\r.
\end{align*}
Following the computations done above, one obtains
\begin{align}\label{noLP}
\dfrac{d}{dt}\L^h+c |\xi|^{-2\alpha}\L^h\leq 0.   
\end{align}
Then, since $\L^h(\xi,t)\sim |\widehat U(\xi,t)|^2$ for $|\xi|\geq1$, solving \eqref{noLP} we deduce the desired decay estimates.
\end{remark}

\subsection{Low-frequency decay estimates}
We have the following proposition.
\begin{proposition}[Low-frequency decay] 
Let $U^{\ell}=\mathcal{F}( \mathbf{I}_{|\xi|\leq 1}\mathcal{F}U )$ and $j\in\mathbb{Z}$. There exists a low-frequency Lyapunov functional $\L^\ell_j\sim \|U^{\ell}_j\|_{L^2}^2$ such that for all time $t>0$,
\begin{align}
    \dfrac{d}{dt}\L^\ell_j+c 2^{2\beta j}\L^\ell_j\leq 0.
\end{align}
Consequently, for any $t>0$, we have
\begin{align}
    \|U^\ell\|_{L^2} \leq C (1+t)^{-\frac{1}{4\beta}}\|U_0\|_{ L^1\cap L^2}.\label{section2:low}
\end{align}
Here, $\beta$ is given by Lemma \ref{lemma:equiv}, and $C, c>0$ are universal constants.
\end{proposition}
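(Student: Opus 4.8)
The plan is to mirror the high-frequency argument of Proposition~\ref{prophf}, but using the low-frequency Lyapunov functional \eqref{LyaNonSymIntroBF} \emph{without} the $|\xi|^{-2k}$ weights, since in the regime $|\xi|\leq 1$ the operator $A\partial_x$ is subordinate to the zeroth-order terms rather than dominant. Concretely, for $j\leq 0$ I would set
\begin{equation*}
\L^\ell_j=\dfrac{1}{2}\|U_j^{\ell}\|_{L^2}^2+\sum_{k=1}^{K}\eps_k\bigl(B^s(A\partial_x+B^a)^{k-1}U^{\ell}_j,B^s(A\partial_x+B^a)^kU^{\ell}_j\bigr),
\end{equation*}
with $\eps_k=\eps^{m_k}$ for the same hierarchy of exponents $m_k$ as in \cite{BZ}. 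Differentiating in time and using \eqref{partialdiss} produces $\tfrac{d}{dt}\L^\ell_j+\kappa\|U^\ell_j\|_{L^2}^2+\sum_k\eps_k\|B^s(A\partial_x+B^a)^kU^\ell_j\|_{L^2}^2\leq I^1_j+I^2_j+I^3_j$ with the analogous commutator remainders. The key structural point is that on the low-frequency annulus $\{\tfrac34 2^j\leq|\xi|\leq\tfrac83 2^j\}$ with $j\leq 0$ one has $2^j\leq 1$, so every stray factor of $\partial_x$ produced when commuting $B^s$ through $(A\partial_x+B^a)$ costs a harmless power $2^{j}\leq 1$ rather than a dangerous $2^{-j}$; thus the same Young-inequality absorption scheme as in the high-frequency case closes, yielding
\begin{equation*}
\dfrac{d}{dt}\L^\ell_j+\eta\sum_{k=0}^{K}\|B^s(A\partial_x+B^a)^kU^\ell_j\|_{L^2}^2\leq 0.
\end{equation*}

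Next I would invoke the equivalence $\L^\ell_j\sim\|U^\ell_j\|_{L^2}^2$ for $\eps$ small, together with the low-frequency estimate \eqref{decay:betaLP} from Lemma~\ref{lemma:equiv}, which gives $\sum_{k=0}^{K}\|B^s(A\partial_x+B^a)^kU^\ell_j\|_{L^2}^2\geq c\,2^{2j\beta}\|U^\ell_j\|_{L^2}^2$. This immediately yields the differential inequality $\tfrac{d}{dt}\L^\ell_j+c\,2^{2j\beta}\L^\ell_j\leq 0$, hence by Gr\"onwall $\L^\ell_j(t)\lesssim e^{-c2^{2j\beta}t}\L^\ell_j(0)$, i.e. $\|U^\ell_j(t)\|_{L^2}^2\lesssim e^{-c2^{2j\beta}t}\|U^\ell_j(0)\|_{L^2}^2$.

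Finally I would sum over $j\leq 0$ to pass from the dyadic bound to the decay rate. Using $\|\dot\Delta_j U_0\|_{L^2}\lesssim 2^{j/2}\|\dot\Delta_j U_0\|_{L^1}\lesssim 2^{j/2}\|U_0\|_{L^1}$ (Bernstein in $L^1\to L^2$, valid since $j\leq 0$), one gets
\begin{equation*}
\|U^\ell(t)\|_{L^2}^2\lesssim\sum_{j\leq 0}e^{-c2^{2j\beta}t}2^{j}\|U_0\|_{L^1}^2\lesssim\Big(\sum_{j\leq 0}2^{j}e^{-c2^{2j\beta}t}\Big)\|U_0\|_{L^1}^2.
\end{equation*}
Estimating the geometric-type sum by splitting at the index $j^*$ where $2^{2j^*\beta}t\sim 1$, i.e. $2^{j^*}\sim t^{-1/(2\beta)}$: for $j\leq j^*$ the exponential is $O(1)$ and $\sum 2^j\lesssim 2^{j^*}\sim t^{-1/(2\beta)}$, while for $j\geq j^*$ the exponential decays superpolynomially and contributes a lower-order term; altogether $\sum_{j\leq 0}2^j e^{-c2^{2j\beta}t}\lesssim t^{-1/(2\beta)}$, giving $\|U^\ell(t)\|_{L^2}\lesssim t^{-1/(4\beta)}\|U_0\|_{L^1}$. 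Combining with the trivial bound $\sup_t\|U^\ell(t)\|_{L^2}\lesssim\|U_0\|_{L^2}$ obtained by integrating the Lyapunov inequality, and interpolating to replace $t^{-1/(4\beta)}$ by $(1+t)^{-1/(4\beta)}$, yields \eqref{section2:low}. For $\beta=0$ the estimate \eqref{decay:betaLP} gives a spectral gap uniform in $j\leq 0$, and exponential decay follows directly. The main obstacle I anticipate is purely bookkeeping: tracking the powers of $2^j$ in the commutator terms carefully enough to confirm that, in contrast to the high-frequency regime, \emph{no} negative power of $2^j$ ever appears, so that the weight-free functional indeed closes; the summation step and the appeal to Lemma~\ref{lemma:equiv} are then routine.
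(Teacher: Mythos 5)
Your proposal is correct and follows essentially the same route as the paper: the identical unweighted low-frequency functional \eqref{eq:lyapl}, the same absorption of the remainder terms (made possible because $2^j\leq 1$ on low-frequency blocks), the appeal to \eqref{decay:betaLP}, Gr\"onwall, and summation over dyadic blocks. The only cosmetic difference is that you invoke Bernstein's $L^1\to L^2$ inequality and split the sum at $2^{j^*}\sim t^{-1/(2\beta)}$, whereas the paper routes the same estimate through the embedding $L^1\hookrightarrow \dot{B}^{-1/2}_{2,\infty}$; these are equivalent.
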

\begin{proof}

 We define the low-frequency perturbed energy functional
\begin{align}
\label{eq:lyapl}
\L^\ell_j=\dfrac{1}{2}\|U^{\ell}_j\|_{L^2}^2+\sum_{k=1}^{K}\eps_k\bigl(B^s(A\partial_x+B^a)^{k-1}U^{\ell}_j,B^s(A\partial_x+B^a)^kU^{\ell}_j\bigl).
\end{align}
Differentiating in time $\L^\ell_j$, we obtain
\begin{align}
    \dfrac{d}{dt}\L^\ell_j+\kappa\|U^{\ell}_j\|_{L^2}^2+\sum_{k=1}^{K}\eps_k\|B^s(A\partial_x+B^a)^kU^{\ell}_j\|_{L^2}^2&\leq  J^1_j+J^2_j+J^3_j,
\end{align}
where 
\begin{equation}
    \begin{dcases}        J^1_j=\sum_{k=1}^{K}\eps_k\bigl(B^s(A\partial_x+B^a)^{k-1}B^sU^{\ell}_j,B^s(A\partial_x+B^a)^kU^{\ell}_j\bigl),
\\
J^2_j=\sum_{k=1}^{K}\eps_k\bigl(B^s(A\partial_x+B^a)^{k-1}U^{\ell}_j,B^s(A\partial_x+B^a)^kB^sU^{\ell}_j\bigl),
\\
J^3_j=\sum_{k=1}^{K}\eps_k\bigl(B^s(A\partial_x+B^a)^{k-1}U^{\ell}_j,B^s(A\partial_x+B^a)^{k+1}U^{\ell}_j\bigl).
    \end{dcases}
\end{equation}
Controlling $J^1_j$, $J^2_j$ and $J^3_j$ follows closely from the computations done in the high-frequency section.
In particular, there exists a constant $\mu>0$ such that, choosing the sequence $\{\eps_k\}_{1\leq k\leq K}$ as in the high-frequency section, we have
\begin{align}
    \dfrac{d}{dt}\L^\ell +\mu \sum_{k=0}^{K}\|B^s(A\partial_x+B^a)^kU^{\ell}_j\|_{L^2}^2 \leq 0.
\end{align}
Moreover, for a suitably small $\varepsilon$, using the inhomogeneous Kalman rank condition, we have $$\L^\ell_j \sim \|U_{j}^{\ell}\|_{L^2}^2.$$
Therefore, 
\begin{align}
    \dfrac{d}{dt}\L_j^\ell + 2^{2\beta j}\L_j^\ell\leq 0.\label{Lyapunov:low}
\end{align}
The use of Gr\"onwall's lemma to \eqref{Lyapunov:low} yields
\begin{equation}\nonumber
\begin{aligned}
&\L_j^\ell(t)\lesssim  e^{-2^{2\beta j} t} \L_j^\ell(0),
\end{aligned}
\end{equation}
from which we infer
\begin{equation}\label{low1}
\begin{aligned}
\|U^{\ell}\|_{L^2}^2\sim \sum_{j\in\mathbb{Z}} \|\dot{\Delta}_j U^{\ell}\|_{L^2}^2\lesssim \sum_{j\in\mathbb{Z}} e^{-2^{2\beta j} t}  \|\dot{\Delta}_{j}U_0^{\ell}\|_{L^2}\lesssim t^{-\frac{1}{4\beta}}\|U_0\|_{L^1},
\end{aligned}
\end{equation}
due to the facts that $\|\dot{\Delta}_{j}U_0^{\ell}\|_{L^2}\lesssim 2^{\frac{j}{2}} \|U_0\|_{\dot{B}^{-\frac{1}{2}}_{2,\infty}}$, the embedding $L^1\hookrightarrow \dot{B}^{-\frac{1}{2}}_{2,\infty}$ and
$$
\sum_{j\in\mathbb{Z}} t^{\frac{1}{4\beta}}2^{-\frac{j}{2}}e^{-2^{2\beta j} t}\lesssim  1.
$$
On the other hand, taking advantage of \eqref{Lyapunov:low} and the low-frequency cut-off property, we also have
\begin{equation}\label{low2}
\begin{aligned}
\|U^{\ell}\|_{L^2}\lesssim \|U_0^{\ell}\|_{L^2}\lesssim \|U_0^{\ell}\|_{\dot{B}^{-\frac{1}{2}}_{2,\infty}}\lesssim \|U_0\|_{L^1}.
\end{aligned}
\end{equation}
By \eqref{low1}-\eqref{low2}, the decay estimate \eqref{section2:low} follows.

\end{proof}

\begin{remark}\label{noLPHF2}
As mentioned in Remark \ref{noLPHF}, it is possible to justify a similar result with a frequency-pointwise functional. Consider, for $|\xi|\leq1$,
\begin{align*}
\L^\ell(\xi,t)=\dfrac{1}{2}|U(\xi,t)|^2+\sum_{k=1}^{K}\eps_k\textrm{Re}\bigl(B^s(\i \xi A+B^a)^{k-1}\widehat U(\xi,t),B^s(\i \xi A+B^a)^k\widehat U(\xi,t)\r.
\end{align*}
By similar computations done above, one obtains
\begin{align}\label{noLPLF}
\dfrac{d}{dt}\L^\ell+c|\xi|^{2\beta}\L^\ell\leq 0.   
\end{align}
Then, since for $|\xi|\leq1$ we have $ \L^\ell(\xi,t)\sim |\widehat U(\xi,t)|^2$, solving \eqref{noLPLF} leads to the desired decay estimates.
\end{remark}


\section{An Algorithm to Improve the Lyapunov Functional}
\label{sec:4}
\noindent
While the previous section describes an effective method to prove the decay of the solution for systems with non-symmetric relaxation, it does not offer a straightforward approach for computing the decay rate. Besides, in many physical situations, improvements of the rate of decay are observed for particular choices of coefficients. In this section, we introduce a technique to build new Lyapunov functionals which can capture these improvements and provide an explicit rate of decay.


\begin{assumption}
\label{ass:kalman}
Along this section, we assume that the inhomogeneous Kalman condition of Definition \ref{def:inhomo} holds. 
\end{assumption}

The new functional builds upon the ones described in \eqref{eq:lyaph} and \eqref{eq:lyapl}. The idea is to remove the redundant terms in the expansion of $B^s(A \partial_x + B^a)^k$, specifically  the terms which do not contribute to recovering dissipation in new directions. To this end, we proceed in an algorithmic fashion. We define a binary tree $\mathsf{T}$ as follows (see also Figure \ref{fig:tree}):
\begin{itemize}
    \item The base of the tree is the node $B^s$. We say that the level is $k =0$.
    \item We associate to every node at the level $k $ a matrix $X_k ^i$, where $i$ is the position of the node on that level ($i = 1$ for the leftmost node). At every node $X_k ^i$, the tree splits into two nodes, $X_k ^i A$ (left) and $X_k ^i B^a$ (right). For instance, the first two nodes are $X_1^1 = B^s A$ and $X_1^2 = B^s B^a$, see Figure \ref{fig:tree}.
\end{itemize}
We are also going to need the following definition. 
\begin{definition}
\label{admissible}
    Let $p_1 = 1$ and $q_1 = 1/2$. A sequence $\{p_k , q_k \}_{k  \in \N^*}$ is called admissible if 
    \begin{enumerate}
        \item $p_k , q_k  > 0$ for every $k  \in \N^*$.
        \item For every $k\in \N^*$ it holds 
        \[
        q_k  < p_k  - p_{k -1} <  q_{k -1},
        \]
    \end{enumerate}
\end{definition}
It follows that if $\{p_k ,q_k \}_{k \in \N^*}$ is admissible, then $p_k $ is increasing and $q_k $ is decreasing. Moreover, it is apparent that
\[
p_{k -1} < p_k  - q_k , \quad \text{ and } \quad p_k  + q_k  > p_{k +1}.\]
For the rest of this section, we fix an admissible $\{p_k ,q_k \}_{k \in \N^*}$. We are now in a position to describe the algorithm constructing the Lyapunov functional.

\subsubsection{Initialization} Mimicking the previous section, the first piece of our functional is simply the energy of the system, 
\[
\mathsf{E} := \frac12 \|U_j\|_{L^2}^2.
\]
We have
\[
\frac{d}{dt} \mathsf{E} + \bigl(B^s U_j, U_j\bigl) = 0,
\]
and by the hypotheses on $B^s$,
\[
\frac{d}{dt} \mathsf{E} + \kappa\| U_j\|_{L^2}^2 \leq 0.
\]

\subsubsection{Iteration}
The functional is constructed by charting a suitable path along $\mathsf{T}$. For $k \geq1$, starting from a node $X_{k -1}^i$ at the level $k -1$, we want to define a procedure to decide whether to add the nodes connected to it on the next level or to stop. A priori, there might be different paths for achieving our goal. In order to select a unique path, we make the following choice to scour the tree:
\begin{equation}
\label{alwaysleft}
\tag{L}
    \text{We always begin by considering the \textit{leftmost} node available at the level } k -1.
\end{equation}
Once we have selected a node $X_{k -1}^i$, we consider the matrix
\[
M = \begin{bmatrix}
    B^s, &\ldots, &X^i_{k -1}
\end{bmatrix}^\top,
\]
collecting all of the previously chosen nodes. We call
\[
r := \rank(M).
\]
Furthermore, we define
\[
M_A := \begin{bmatrix}
    B^s, &\ldots, &X^i_{k -1}, &X^i_{k -1}A
\end{bmatrix}^\top,
\]
and
\[
M_{B^a} := \begin{bmatrix}
    B^s, &\ldots, &X^i_{k -1}, &X^i_{k -1}B^a
\end{bmatrix}^\top.
\]
Five possibilities can occur (see Figure \ref{fig1}):
\begin{enumerate}
    \item \textbf{Left:} If 
\begin{equation}
    \label{eq:rankleft}
    \rank(M_A) > r \text{ but } \rank(M_{B^a}) = r,
\end{equation} 
then we add $X_{k -1}^iA$ to the path.
    \item \textbf{Right:} If 
\begin{equation}
    \label{eq:rankright}
    \rank(M_{B^a}) > r \text{ but } \rank(M_A) = r,
\end{equation} 
then we add $X_{k -1}^iB^a$ to the path.
   \item \textbf{Left AND Right:} If $\rank(M_A) > r$ and 
\begin{equation}
\label{eq:rankmix}
\rank\left(\begin{bmatrix}
    B^s, &\ldots, &X_{k -1}^i, &X_{k -1}^iA, &X_{k -1}^iB^a
\end{bmatrix}^\top\right) > \rank(M_A),
\end{equation}
we add both node $X_{k -1}^iA$ and $X_{k -1}^iB^a$ to the path.
\item \textbf{Left OR Right}: If $\rank(M_A) > r$, $\rank(M_{B^a}) > r$ and 
\begin{equation}
\label{eq:rankmix2}
\rank\left(\begin{bmatrix}
    B^s, &\ldots, &X_{k -1}^i, &X_{k -1}^iA, &X_{k -1}^iB^a
\end{bmatrix}^\top\right) = \rank(M_A) = \rank(M_{B^a}),
\end{equation} 
we add node $X_{k -1}^iA$ to the path. In low frequencies, we will instead add $X_{k -1}^iB^a$.
\item \textbf{Stop:} If $\rank(M_A) = \rank(M_{B_a}) = r$, then we stop and move to the next node, namely the first one on the right of $X_{k -1}^i$.  
\end{enumerate}
Whenever a node is added to the path, a certain term will be summed to the final Lyapunov functional. To define the exact term it will be crucial to associate to each chosen node a special number, which we call \textit{discrepancy} $\d_k ^i$. This discrepancy term will be directly related to the decay rates we can recover in the end.

Since the terms are slightly different in high and low frequencies, we split the analysis into two sections. 

\begin{figure}[h!]
    \begin{minipage}[t]{0.22\linewidth}
        \centering
        \begin{tikzpicture}
            [level distance=15mm,
   every node/.style={fill=green!20,inner sep=1pt},
   level 1/.style={sibling distance=10mm,nodes={fill=red!20}},
   level 2/.style={sibling distance=15mm,nodes={fill=red!20}}]
  \node[fill=green!20,circle] {$\ldots$}
     child {node[fill=green!20] {$X_{k-1}^i$}
       child {node[fill=green!20] {$X_{k-1}^iA$}
       }
       child {node[fill=red!20] {$X_{k-1}^iB^a$}
       }
     }
     child {node[fill=red!20,circle] {$\ldots$}
     };
        \end{tikzpicture}
        Left path
    \end{minipage}%
    \begin{minipage}[t]{0.22\linewidth}
        \centering
        \begin{tikzpicture}
         [level distance=15mm,
   every node/.style={fill=green!20,inner sep=1pt},
   level 1/.style={sibling distance=10mm,nodes={fill=red!20}},
   level 2/.style={sibling distance=15mm,nodes={fill=red!20}}]
  \node[fill=green!20,circle] {$\ldots$}
     child {node[fill=green!20] {$X_{k-1}^i$}
       child {node[fill=red!20] {$X_{k-1}^iA$}
       }
       child {node[fill=green!20] {$X_{k-1}^iB^a$}
       }
     }
     child {node[fill=red!20,circle] {$\ldots$}
     };
        \end{tikzpicture}
        Right path
    \end{minipage}
    \begin{minipage}[t]{0.22\linewidth}
        \centering
        \begin{tikzpicture}
            [level distance=15mm,
   every node/.style={fill=green!20,inner sep=1pt},
   level 1/.style={sibling distance=10mm,nodes={fill=red!20}},
   level 2/.style={sibling distance=15mm,nodes={fill=red!20}}]
  \node[fill=green!20,circle] {$\ldots$}
     child {node[fill=green!20] {$X_{k-1}^i$}
       child {node[fill=green!20] {$X_{k-1}^iA$}
       }
       child {node[fill=green!20] {$X_{k-1}^iB^a$}
       }
     }
     child {node[fill=red!20,circle] {$\ldots$}
     };
        \end{tikzpicture}
        Mixed path
    \end{minipage}%
    \begin{minipage}[t]{0.22\linewidth}
        \centering
        \begin{tikzpicture}
            [level distance=15mm,
   every node/.style={fill=green!20,inner sep=1pt},
   level 1/.style={sibling distance=10mm,nodes={fill=red!20}},
   level 2/.style={sibling distance=15mm,nodes={fill=red!20}}]
  \node[fill=green!20,circle] {$\ldots$}
     child {node[fill=green!20] {$X_{k-1}^i$}
       child {node[fill=red!20] {$X_{k-1}^iA$}
       }
       child {node[fill=red!20] {$X_{k-1}^iB^a$}
       }
     }
     child {node[fill=green!20,circle] {$\ldots$}
     };
        \end{tikzpicture}
        Stop
    \end{minipage}%
    \caption{The four possible routes to follow, starting from a node $X_k^i$}
    \label{fig1}
\end{figure}
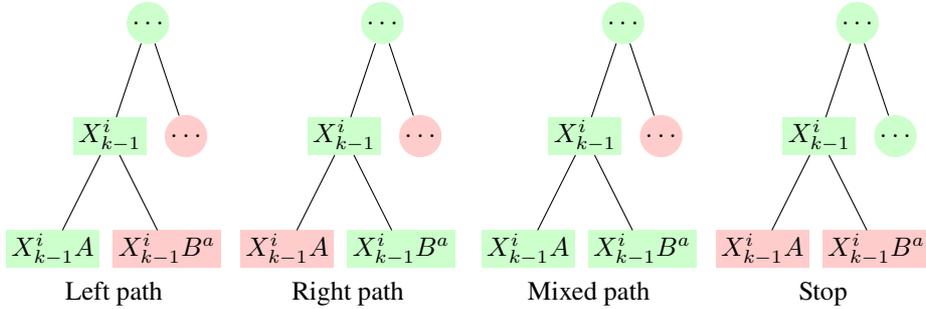

\section{High-frequency Analysis}
\label{sec:ImproveHF}
\noindent
\subsection{Building blocks}
We now proceed to describe in detail the building blocks of the final functional in high frequencies.
In what follows, we will omit the position $i$ of the node within the level $k $ for clarity of exposition. In addition, we always use the high-frequency variable $U^{h}=\mathcal{F}^{-1}(\mathbf{1}_{|\xi|\geq 1}\widehat{U})$ and simplify the notation by omitting the superscript $^h$.

\subsection*{Case (1): Left} In this case, we associate the term
\[
\Phi^k _A := 2^{-2j}\eps^{p_k }\bigl(X_{k -1} U_j ,\, X_{k -1} A\partial_x U_j  \bigl),
\]
and the discrepancy \[\d_k  = 0.\]
We have the following lemma.
\begin{lemma}
\label{lemma:left}
Let $0<\eps<1$.   It holds
    \begin{align}
    \label{leftpath}
    \begin{split}
        \partial_t \Phi^k _A + \frac{\eps^{p_k }}{2}\| X_{k -1} A U_j \|_{L^2}^2 &\leq C \eps^{p_k  - q_k } \|X_{k -1} U_j \|_{L^2}^2 + C\eps^{p_k }\sum_{i = 0}^{k -2} \|X_i U_j \|_{L^2}^2 \\ 
        &+ \eps^{p_k  + q_k } \|X_{k -1}A^2 U_j \|_{L^2}^2 + \eps^{p_k  + q_k }2^{-2j}\|X_{k -1}AB^aU_j \|_{L^2}^2.
    \end{split}
    \end{align}
\end{lemma}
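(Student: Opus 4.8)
The plan is to differentiate $\Phi^k_A$ in time, insert the equation, and reorganise the outcome into one negative ``good'' term plus controllable remainders. Since $\ddj$ commutes with the constant-coefficient operators $A\partial_x$, $B^a$, $B^s$, the localised component $U_j$ solves $\partial_t U_j=-(A\partial_x+B^a+B^s)U_j$; applying the product rule to $\Phi^k_A=2^{-2j}\eps^{p_k}(X_{k-1}U_j,X_{k-1}A\partial_x U_j)$ and commuting every $\partial_x$ through the constant matrices, I expect exactly six terms. The contribution obtained by letting $\partial_t$ act on the first slot and keeping the $-A\partial_x U_j$ piece of the equation is the good term $-2^{-2j}\eps^{p_k}\|X_{k-1}A\partial_x U_j\|_{L^2}^2$; by the Bernstein property of Lemma \ref{lem:Bernstein} applied to the annulus-localised function $X_{k-1}AU_j$ (whose Fourier support lies in $\{\tfrac34 2^j\le|\xi|\le\tfrac83 2^j\}$), this is $\le-\tfrac{9}{16}\eps^{p_k}\|X_{k-1}AU_j\|_{L^2}^2$, so I would keep $\tfrac12\eps^{p_k}\|X_{k-1}AU_j\|_{L^2}^2$ for the left-hand side of \eqref{leftpath} and hold the surplus $\tfrac{1}{16}\eps^{p_k}\|X_{k-1}AU_j\|_{L^2}^2$ in reserve. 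The five remaining terms are $-2^{-2j}\eps^{p_k}(X_{k-1}B^aU_j,X_{k-1}A\partial_x U_j)$ and $-2^{-2j}\eps^{p_k}(X_{k-1}B^sU_j,X_{k-1}A\partial_x U_j)$ from the first slot, and $-2^{-2j}\eps^{p_k}(X_{k-1}U_j,X_{k-1}A^2\partial_x^2 U_j)$, $-2^{-2j}\eps^{p_k}(X_{k-1}U_j,X_{k-1}AB^a\partial_x U_j)$, $-2^{-2j}\eps^{p_k}(X_{k-1}U_j,X_{k-1}AB^s\partial_x U_j)$ from the second slot.

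The hard part will be the term $-2^{-2j}\eps^{p_k}(X_{k-1}B^aU_j,X_{k-1}A\partial_x U_j)$, because $\|X_{k-1}B^aU_j\|_{L^2}$ is a priori not controlled by the norms of the previously selected nodes, so Cauchy--Schwarz alone does not close the estimate. This is where hypothesis \eqref{eq:rankleft} is indispensable: the equality $\rank(M_{B^a})=r$ says precisely that the rows of $X_{k-1}B^a$ lie in the row space of $M=[B^s,\dots,X_{k-1}]^\top$, hence there are constant matrices $C_0,\dots,C_{k-1}$ with $X_{k-1}B^a=\sum_{i=0}^{k-1}C_iX_i$, and therefore $\|X_{k-1}B^aU_j\|_{L^2}^2\lesssim\sum_{i=0}^{k-1}\|X_iU_j\|_{L^2}^2$. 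Combining this with Bernstein ($\|X_{k-1}A\partial_x U_j\|_{L^2}\sim 2^j\|X_{k-1}AU_j\|_{L^2}$) and the bound $2^{-j}\lesssim1$ (valid since $U^h_j\ne0$ only for $2^j\gtrsim1$), Young's inequality bounds this term by $\delta\,\eps^{p_k}\|X_{k-1}AU_j\|_{L^2}^2+C_\delta\,\eps^{p_k}\sum_{i=0}^{k-1}\|X_iU_j\|_{L^2}^2$ for any $\delta>0$; splitting the sum into $i=k-1$ and $i\le k-2$ and using $\eps^{p_k}\le\eps^{p_k-q_k}$ then places these contributions in the first two groups on the right of \eqref{leftpath}.

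For the remaining four terms I would use routine estimates. The two $B^s$-cross terms use $\|X_{k-1}B^sU_j\|_{L^2}\lesssim\|B^sU_j\|_{L^2}=\|X_0U_j\|_{L^2}$ and $\|X_{k-1}AB^sU_j\|_{L^2}\lesssim\|X_0U_j\|_{L^2}$ (here $X_0=B^s$, and $X_{k-1}$, $X_{k-1}A$ are fixed matrices), Bernstein to trade each $\partial_x$ for a factor $2^j$ (harmless against the weight $2^{-2j}$ since $2^{-j}\lesssim1$), and Young, yielding again $\delta\,\eps^{p_k}\|X_{k-1}AU_j\|_{L^2}^2+C_\delta\,\eps^{p_k}(\|X_{k-1}U_j\|_{L^2}^2+\|X_0U_j\|_{L^2}^2)$. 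The two genuinely higher-order terms keep the weight: $\partial_x^2$ contributes the factor $2^{2j}$, which cancels $2^{-2j}$ exactly, so one gets $\lesssim\eps^{p_k}\|X_{k-1}U_j\|_{L^2}\|X_{k-1}A^2U_j\|_{L^2}$ and $\lesssim\eps^{p_k}\|X_{k-1}U_j\|_{L^2}\bigl(2^{-j}\|X_{k-1}AB^aU_j\|_{L^2}\bigr)$; applying Young's inequality with the factorisation $\eps^{p_k}=\eps^{(p_k-q_k)/2}\eps^{(p_k+q_k)/2}$, and tuning the Young parameter so that all universal constants land on the $\|X_{k-1}U_j\|_{L^2}^2$-terms, produces exactly $\eps^{p_k+q_k}\|X_{k-1}A^2U_j\|_{L^2}^2$ and $\eps^{p_k+q_k}2^{-2j}\|X_{k-1}AB^aU_j\|_{L^2}^2$, together with terms of the form $C\eps^{p_k-q_k}\|X_{k-1}U_j\|_{L^2}^2$.

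To conclude, I would sum the six contributions, choose $\delta$ small enough that the cumulative coefficient of $\eps^{p_k}\|X_{k-1}AU_j\|_{L^2}^2$ coming from the cross terms does not exceed the reserved $\tfrac{1}{16}$, and absorb $\|X_0U_j\|_{L^2}^2$ either into $\|X_{k-1}U_j\|_{L^2}^2$ when $k=1$ or into $\sum_{i=0}^{k-2}\|X_iU_j\|_{L^2}^2$ when $k\ge2$; this produces precisely \eqref{leftpath}. The only genuine obstacle is the $B^a$-cross term — the single place where the algebra of the tree, namely the ``Left'' rank condition \eqref{eq:rankleft}, is actually used — while everything else reduces to Cauchy--Schwarz, Bernstein, and careful bookkeeping of the powers of $\eps$.
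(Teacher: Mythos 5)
Your proposal is correct and follows essentially the same route as the paper's proof: the same six-term expansion of $\partial_t\Phi^k_A$, the Bernstein lower bound $2^{-2j}\|X_{k-1}A\partial_xU_j\|_{L^2}^2\ge\tfrac{9}{16}\|X_{k-1}AU_j\|_{L^2}^2$, the rank condition $\rank(M_{B^a})=r$ to express $X_{k-1}B^a$ through the previously selected nodes, and Young's inequality with the $\eps^{\pm q_k}$ split for the two higher-order terms. The only cosmetic difference is that the paper handles the $A^2\partial_{xx}$ term by integration by parts while you trade the derivatives directly via Bernstein, and your row-space justification of $\|X_{k-1}B^aU_j\|_{L^2}^2\lesssim\sum_{i=0}^{k-1}\|X_iU_j\|_{L^2}^2$ is if anything cleaner than the paper's appeal to Cayley--Hamilton.
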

\begin{proof}
A direct computation yields
    \begin{align}
    \label{eq:left}
    \begin{split}
        2^{-2j}\partial_t \bigl(X_{k -1} U_j ,\, X_{k -1} A\partial_x U_j  \bigl) &+2^{-2j}\| X_{k -1} A \partial_x U_j \|_{L^2}^2\\
        =&  
        - 2^{-2j}\bigl(X_{k -1} B^s U_j , \, X_{k -1} A \partial_x U_j \bigl) \\
        &- 2^{-2j}\bigl(X_{k -1} U_j , \, X_{k -1} A B^s\partial_x U_j \bigl) \\
        &- 2^{-2j}\bigl(X_{k -1} B^a U_j , \, X_{k -1} A\partial_x U_j \bigl) \\
        &- 2^{-2j}\bigl(X_{k -1} U_j , \, X_{k -1} A^2 \partial_{xx} U_j \bigl) \\
        &- 2^{-2j}\bigl(X_{k -1} U_j , \, X_{k -1} A B^a\partial_x U_j \bigl).
    \end{split}
\end{align}
The support of $\widehat{U_j}$ ensures that
$$
2^{-2j}\| X_{k -1} A \partial_x U_j \|_{L^2}^2\geq \frac{9}{16}\| X_{k -1} A U_j \|_{L^2}^2.
$$
For the first term, we deduce from Lemma \ref{lem:Bernstein} that
\begin{align*}
2^{-2j}|\bigl(X_{k -1} B^s U_j , \, X_{k -1} A \partial_x U_j \bigl)|&\leq 2^{-2j} \| X_{k -1} B^s U_j \|_{L^2} \|X_{k -1} A \partial_xU_j \|_{L^2} 
\\
&\leq C 2^{-j}\| X_{k -1} B^s U_j \|_{L^2}^2 \|X_{k -1} A U_j \|_{L^2}^2
\\
&\leq C 2^{-2j} \|X_{k -1}\|^2_O \|B^s U_j \|_{L^2}^2 + \frac{1}{32}\|X_{k -1} A U_j \|_{L^2}^2,
\end{align*}
where $\|X_{k -1}\|^2_O$ is the operator norm of $X_{k -1}$.
For the second term, using the Bernstein properties in Lemma \ref{lem:Bernstein} again, we obtain
\[
2^{-2j}|\bigl(X_{k -1} U_j , \, X_{k -1} A B^s\partial_x U_j \bigl)| \leq  C\|X_{k -1}A\|^2_O\|B^sU_j \|_{L^2}^2+\|X_{k -1}U_j \|_{L^2}^2.
\]
Regarding the third term, we recall that $\rank(M_{B_a}) = r$. Hence, the Cayley–Hamilton theorem ensures that
\[
\|X_{k -1}B^aU_j \|_{L^2}^2 \leq C\sum_{i=0}^{k -1} \| X_iU_j \|_{L^2}^2,
\]
from which we infer
\begin{align}
\label{thirdterm}
\begin{split}
- 2^{-2j}\bigl(X_{k -1} B^a U_j , \, X_{k -1} A\partial_x U_j \bigl) \leq C 2^{-2j}\sum_{i=0}^{k -1} \|X_iU_j \|_{L^2}^2 + \frac{1}{32} \|X_{k -1}AU_j \|_{L^2}^2.
\end{split}
\end{align}
Moving on to the fourth term, an integration by parts, along with the Young inequality, yields
\[
2^{-2j}|\bigl(X_{k -1} U_j , \, X_{k -1} A^2 \partial_{xx} U_j \bigl)| \leq \frac{C}{\eps^{q_k }} \|X_{k -1} U_j \|_{L^2}^2+ C\eps^{q_k } \|X_{k -1}A^2U_j \|_{L^2}^2.
\]
Finally, in a similar fashion, we control the last term in the following way
\[
2^{-2j}| \bigl(X_{k -1} U_j , \, X_{k -1} A B^a\partial_x U_j \bigl) | \leq \frac{C}{\eps^{q_k }} \|X_{k -1} U_j \|_{L^2}^2+ C\eps^{q_k } 2^{-2j}\|X_{k -1}AB^a U_j \|_{L^2}^2.
\]
Since we are working in high frequencies, the term $2^{-2j}\|X_{k -1}U_j \|_{L^2}^2$, appearing in the sum of inequality \eqref{thirdterm}, can be absorbed into one of the terms $\|X_{k -1}U_j \|_{L^2}^2$ of the above inequalities. Collecting all the obtained inequalities and multiplying by $\eps^{p_k }$, we have the thesis.
\end{proof}

\subsection*{Case (2): Right} We associate to the node the functional
\[
\Phi^k _{B^a} := 2^{-2j}\eps^{p_k }\bigl(X_{k -1} U_j ,\, X_{k -1} B^a U_j  \bigl)
\]
and the discrepancy
\[
\d_k  = 1.
\]
We have the following lemma.
\begin{lemma}
\label{lemma:right}
    It holds
    \begin{align}
    \label{rightpath1}
    \begin{split}
        \partial_t \Phi^k _{B^a}
        + \frac{\eps^{p_k }}{2}2^{-2j}\|&X_{k -1} B^a U_j \|_{L^2}^2\leq C \eps^{p_k  -q_k }\|X_{k -1} U_j \|_{L^2}^2 + C\eps^{p_k } \sum_{i = 0}^{k -2} \|X_j U_j \|_{L^2}^2 \\ 
        &+\eps^{p_k  + q_k }2^{-2j} \|X_{k -1}B^aA U_j \|_{L^2}^2 + \eps^{p_k  + q_k }2^{-4j}\|X_{k -1}(B^a)^2U_j \|_{L^2}^2.
    \end{split}
    \end{align}   
\end{lemma}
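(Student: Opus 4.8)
The plan is to follow the proof of Lemma~\ref{lemma:left} \emph{mutatis mutandis}, the essential difference being that the dissipation is now produced \emph{algebraically} by the skew--symmetric block $B^a$ rather than by the transport operator $A\partial_x$. First I would compute $\partial_t\Phi^k_{B^a}$ by inserting $\partial_t U_j=-(A\partial_x+B^a+B^s)U_j$ from \eqref{eq:main}; the cross term $\bigl(X_{k-1}B^aU_j,X_{k-1}B^aU_j\bigr)$ reproduces the exact dissipation, so that (writing $U_j$ for $U_j^h$ as agreed, and collecting all other contributions into weighted $L^2$-pairings $\mathcal R_1,\dots,\mathcal R_5$)
\[
\partial_t\Phi^k_{B^a}+\eps^{p_k}2^{-2j}\|X_{k-1}B^aU_j\|_{L^2}^2\le \eps^{p_k}\sum_{\ell=1}^{5}|\mathcal R_\ell|,
\]
where $\mathcal R_1=2^{-2j}\bigl(X_{k-1}A\partial_xU_j,X_{k-1}B^aU_j\bigr)$, $\mathcal R_2=2^{-2j}\bigl(X_{k-1}B^sU_j,X_{k-1}B^aU_j\bigr)$, $\mathcal R_3=2^{-2j}\bigl(X_{k-1}U_j,X_{k-1}B^aA\partial_xU_j\bigr)$, $\mathcal R_4=2^{-2j}\bigl(X_{k-1}U_j,X_{k-1}(B^a)^2U_j\bigr)$ and $\mathcal R_5=2^{-2j}\bigl(X_{k-1}U_j,X_{k-1}B^aB^sU_j\bigr)$. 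I would then estimate each $\mathcal R_\ell$ via Young's inequality and the Bernstein bounds of Lemma~\ref{lem:Bernstein}, taking all Young parameters independent of $\eps$ and $j$, so that the pieces landing on $2^{-2j}\|X_{k-1}B^aU_j\|_{L^2}^2$ sum to at most $\tfrac12\eps^{p_k}2^{-2j}\|X_{k-1}B^aU_j\|_{L^2}^2$ and can be moved to the left-hand side.

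The two remainders carrying a $\partial_x$ need care with the frequency weights. For $\mathcal R_1$, Bernstein turns $2^{-2j}\partial_x$ into $2^{-j}$, and splitting the product so that the dissipative factor keeps its $2^{-2j}$ weight bounds it by $\tfrac1{16}2^{-2j}\|X_{k-1}B^aU_j\|_{L^2}^2+C\|X_{k-1}AU_j\|_{L^2}^2$. This is exactly where the Case~(2) hypothesis \eqref{eq:rankright}, i.e. $\rank(M_A)=r$, enters: as in the derivation of \eqref{thirdterm}, the Cayley--Hamilton theorem then gives $\|X_{k-1}AU_j\|_{L^2}^2\le C\sum_{i=0}^{k-1}\|X_iU_j\|_{L^2}^2$, which splits into the $\|X_{k-1}U_j\|_{L^2}^2$ term and the lower-order sum of \eqref{rightpath1}. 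For $\mathcal R_3$, I would move the derivative onto $X_{k-1}U_j$ (equivalently, apply Bernstein to that factor), producing $2^{-j}\|X_{k-1}U_j\|_{L^2}\|X_{k-1}B^aAU_j\|_{L^2}$, and apply Young with weight $\eps^{q_k}$ while leaving a $2^{-j}$ on each factor; after multiplication by $\eps^{p_k}$ this yields the $C\eps^{p_k-q_k}\|X_{k-1}U_j\|_{L^2}^2$ and $\eps^{p_k+q_k}2^{-2j}\|X_{k-1}B^aAU_j\|_{L^2}^2$ terms in \eqref{rightpath1}.

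The remaining remainders are purely algebraic. For $\mathcal R_4$, distributing the weight as $\|X_{k-1}U_j\|_{L^2}\cdot 2^{-2j}\|X_{k-1}(B^a)^2U_j\|_{L^2}$ and using Young with weight $\eps^{q_k}$ produces exactly the $\eps^{p_k+q_k}2^{-4j}\|X_{k-1}(B^a)^2U_j\|_{L^2}^2$ term, since $(2^{-2j})^2=2^{-4j}$. For $\mathcal R_2$ and $\mathcal R_5$, boundedness of the matrices $X_{k-1}$ and $X_{k-1}B^a$ gives control by $C2^{-2j}\bigl(\|X_{k-1}U_j\|_{L^2}^2+\|B^sU_j\|_{L^2}^2\bigr)$, and because $B^s=X_0$ and $2^{-2j}\lesssim1$ in high frequencies, these are absorbed into $C\eps^{p_k-q_k}\|X_{k-1}U_j\|_{L^2}^2+C\eps^{p_k}\sum_{i=0}^{k-2}\|X_iU_j\|_{L^2}^2$ (the empty sum when $k=1$ causing no issue, since then $X_{k-1}=X_0=B^s$ and all such terms are $\|X_{k-1}U_j\|_{L^2}^2$). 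Gathering the five estimates and moving the reconstructed half-dissipation to the left gives \eqref{rightpath1}. The only genuinely non-routine point, and the sole place where the Case~(2) hypothesis is indispensable, is the handling of $\mathcal R_1$: without $\rank(M_A)=r$ the quantity $\|X_{k-1}AU_j\|_{L^2}^2$ could not be re-expressed through already-controlled norms and the estimate would break down.
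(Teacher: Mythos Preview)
Your proof is correct and follows essentially the same approach as the paper's own argument: the same differential identity \eqref{eq:right}, the same use of the rank hypothesis $\rank(M_A)=r$ to re-express $\|X_{k-1}AU_j\|_{L^2}^2$ through lower-order terms (your $\mathcal R_1$), and the same Young-with-weight-$\eps^{q_k}$ splittings for the forward terms $\mathcal R_3,\mathcal R_4$. One phrasing quibble: in $\mathcal R_3$ you write ``leaving a $2^{-j}$ on each factor,'' but after Bernstein there is only a single $2^{-j}$; what you actually do (and what the paper does) is place the entire $2^{-j}$ on the $X_{k-1}B^aA$ factor before squaring, which correctly yields the $2^{-2j}$ weight on $\|X_{k-1}B^aAU_j\|_{L^2}^2$.
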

\begin{proof}
    The proof works exactly in the same way as the one of Lemma \ref{lemma:left}. We start with the differential equality
    \begin{align}
    \label{eq:right}
    \begin{split}
        &2^{-2j}\partial_t \bigl(X_{k -1} U_j ,\, X_{k -1} B^a U_j  \bigl) + 2^{-2j} \|X_{k -1} B^a U_j \|_{L^2}^2   \\
        &\quad\quad\quad\quad=-2^{-2j} \bigl(X_{k -1} B^s U_j , \, X_{k -1} B^a U_j \bigl) \\
        &\quad\quad\quad\quad\quad- 2^{-2j}\bigl(X_{k -1} U_j , \, X_{k -1} B^a B^s U_j \bigl) \\
        &\quad\quad\quad\quad\quad- 2^{-2j}\bigl(X_{k -1} A \partial_x U_j , \, X_{k -1} B^a U_j \bigl) \\
        &\quad\quad\quad\quad\quad- 2^{-2j}\bigl(X_{k -1} U_j , \, X_{k -1} B^a A \partial_x U_j \bigl) \\
        &\quad\quad\quad\quad\quad-2^{-2j} \bigl(X_{k -1} U_j , \, X_{k -1} (B^a)^2 U_j \bigl).
    \end{split}
    \end{align}
    The first two terms on the right-hand side of \eqref{eq:right} can be controlled similarly as in the previous lemma. Using $\rank(M_A) = r$ and the Cayley–Hamilton theorem, we arrive at
       \begin{align*}
    \begin{split} 
    2^{-2j}\bigl(X_{k -1} A \partial_x U_j , \, X_{k -1} B^a U_j \bigl)&\leq C 2^{-j} \|X_{k -1}  U_j\|_{L^2}\|X_{k -1} B^a U_j\|_{L^2}\\
    &\leq C 2^{-j}\sum_{i=0}^{k-1}\| X_i  U_j\|_{L^2} \|X_{k -1} B^a U_j\|_{L^2}\\
    &\leq \frac{1}{8} 2^{-2j}\|X_{k -1} B^a U_j\|_{L^2}^2+C \sum_{i=0}^{k-1}\| X_i  U_j\|_{L^2}^2.
        \end{split}
    \end{align*}
    We bound the fourth term in the following way:
    \[
    2^{-2j}| \bigl(X_{k -1} U_j , \, X_{k -1} B^a A \partial_x U_j \bigl)| \leq \frac{C}{\eps^{q_k }}\|X_{k -1}U_j \|_{L^2}^2 + 2^{-2j}\eps^{q_k }\|X_{k -1}B^aAU_j \|_{L^2}^2.
    \]
    Similarly, the last term satisfies
     \[
     2^{-2j}|\bigl(X_{k -1} U_j , \, X_{k -1} (B^a)^2 U_j \bigl)| \leq \frac{C}{\eps^{q_k }}\|X_{k -1}U_j \|_{L^2}^2 + 2^{-4j}\eps^{q_k }\|X_{k -1}(B^a)^2U_j \|_{L^2}^2.
    \qedhere\]
    Substituting the above estimates into \eqref{eq:right} and multiplying this by $\eps^{p_k}$, we prove \eqref{rightpath1}.
\end{proof}

\subsection*{Case (3): Left AND Right}\label{ssmix} This case is the most delicate. A naive approach to recover the norms of both $X_{k -1}AU_j $ and $X_{k -1}B^aU_j $ could be summing up the two equalities \eqref{eq:left} and \eqref{eq:right} obtained in the previous lemmas. However, forgetting the multiplicative factor $2^{-2j}$ for the moment, we have
\begin{align*}
    \begin{split}
        &\partial_t\bigg( \bigl(X_{k -1} U_j ,\, X_{k -1} A\partial_x U_j  + X_{k -1} B^a U_j  \bigl) \bigg) \\
        &\qquad =- \|X_{k -1} B^a U_j \|_{L^2}^2 - \| X_{k -1} A\partial_x U_j \|_{L^2}^2 - 2 \bigl(X_{k -1} B^a U_j , \, X_{k -1} A\partial_x U_j \bigl) + \text{ additional terms. }
    \end{split}
\end{align*}
We have obtained a perfect square. In particular, we are only able to recover the norm of the sum $\|X_{k -1}( A\partial_x + B^a) U_j \|_{L^2}^2$. It is not difficult to see that, using $\Phi_A^k $ and $\Phi_{B^a}^k $, there is no way to recover the sum of the norms. A new functional is needed, with the purpose of canceling out the scalar product appearing on the right-hand side. We will consider two cases, depending on whether the path  extends in the right direction or in the left direction.

\medskip
\noindent
\textbf{Case 1:} In this case, we assume that the tree extends following the $X_{k -1}B^a$ node. More precisely, we make the following assumption.
\begin{assumption}
\label{ass1}
     There exist $\alpha_i, \beta_i$, $i = 0, \ldots, k -1$, such that 
        \[
        X_{k -1}A^2U_j  = \sum_{i=0}^{k -1} \alpha_i X_iU_j  \quad \text{ and } \quad
        X_{k -1}AB^aU_j  = \sum_{i=0}^{k -1} \beta_j X_jU_j .
        \]
\end{assumption}
This guarantees that the node $X_{k -1}A$ is a ``dead end", in that there is no additional dissipation to be gained by following the path further. Actually, we need to ask a little more, namely, that $X_{k -1}A^2$ and $X_{k -1}AB^a$ can be controlled in terms of the nodes \emph{preceding} $X_{k -1}A$.
At this point, we introduce the crucial hypothesis that allows us to work in the mixed case.
\begin{assumption}
\label{ass2}
    There exists $m \in \R$ such that
        \begin{equation}
        \label{cancellation11}
            \bigl(X_{k -1} A \partial_x U_j , \, - X_{k -1} B^a U_j  + m X_{k -1} B^a A^2 U_j \bigl) = 0,
        \end{equation}
\end{assumption}
If \eqref{cancellation11} holds, we can proceed, and we associate to the left node the functional 
\[
\Tilde{\Phi}^k _A := \Phi^k _A + m\eps^{p_k }2^{-2j}\bigl( X_{k -1} A U_j ,\,  X_{k -1} B^a A U_j  \bigl),
\]
and to the right node the functional
\[
\Tilde{\Phi}^k _{B^a} := \Phi^k _{B^a} + m\eps^{p_k }2^{-2j}\bigl( X_{k -1} A U_j ,\,  X_{k -1} B^a A U_j  \bigl).
\]
To both nodes, we associate a discrepancy number 
\[
\d_k  = 1.
\]
We need one final assumption before stating the central result for the mixed case. \begin{assumption}
\label{ass3}
    There exists $\gamma_i$, $i = 0, \ldots, k +1$, such that
        \[
        X_{k -1}B^aAB^aU_j  =
        \gamma_{k +1}X_{k -1}B^aAU_j  + \gamma_k  X_{k -1}B^aU_j  + \sum_{i=0}^{k -1} \gamma_i X_iU_j .
        \]
\end{assumption}
This request is technical. In order to absorb the norms on the right-hand side, we need a control on the norm $\|X_{k -1}B^aAB^a\|_{L^2}^2$ (a node at the level $k +2$) in terms of the norms up to level $k +1$.
Defining
\[
\Phi^k _m := \Tilde{\Phi}^k _A + \Tilde{\Phi}^k _{B^a},
\]
we have the following result.
\begin{lemma}
\label{lemma:magic}
    Let $X_{k -1}$ be a node at the level $k -1$ and let \eqref{eq:rankmix} and $\rank(M_A) > r$  hold. Let also Assumptions \ref{ass1}, \ref{ass2} and \ref{ass3} be satisfied. There exists a suitably large integer $j_0$ such that for all $j\geq j_0$, 
    \begin{align*}
       \partial_t \Phi^k _m &+ \frac{\eps^{p_k }}{4}2^{-2j} \|X_{k -1} B^a U_j \|_{L^2}^2 + \frac{\eps^{p_k }}{4}\| X_{k -1} A U_j \|_{L^2}^2 \\
        &\leq \eps^{p_k  + q_k }2^{-2j}\|X_{k -1}B^aA U_j \|_{L^2}^2 + \eps^{p_k  + q_k }2^{-4j}\|X_{k -1}(B^a)^2U_j \|_{L^2}^2 + C\eps^{p_k -q_k }\sum_{i=0}^{k -1}\|X_i U_j \|_{L^2}^2.
    \end{align*}
\end{lemma}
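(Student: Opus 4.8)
The plan is to differentiate $\Phi^k_m$, which — unfolding the definitions $\Phi^k_m = \widetilde\Phi^k_A + \widetilde\Phi^k_{B^a}$ — equals $\Phi^k_A + \Phi^k_{B^a} + 2m\eps^{p_k}2^{-2j}(X_{k-1}AU_j,\,X_{k-1}B^aAU_j)$, and to show that the only genuinely dangerous contribution, namely the ``perfect-square'' cross term $2^{-2j}(X_{k-1}B^aU_j,\,X_{k-1}A\partial_xU_j)$ left over after summing the differential identities \eqref{eq:left} and \eqref{eq:right}, is exactly cancelled by one of the terms produced by $\partial_t$ of the correction functional, via the algebraic identity \eqref{cancellation11} together with one integration by parts in $x$. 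Everything else is then bounded as in the proofs of Lemmas~\ref{lemma:left} and \ref{lemma:right}, the only new ingredients being Assumptions~\ref{ass1} and \ref{ass3}, which let us collapse the ``dead-end'' nodes $X_{k-1}A^2$, $X_{k-1}AB^a$, $X_{k-1}B^aAB^a$ onto the levels $\le k-1$.

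First I would multiply \eqref{eq:left} and \eqref{eq:right} by $\eps^{p_k}$ and add them. Using the Fourier localisation of $U_j$, i.e. $2^{-2j}\|X_{k-1}A\partial_xU_j\|_{L^2}^2\ge\tfrac{9}{16}\|X_{k-1}AU_j\|_{L^2}^2$, the left-hand side yields the two good dissipative terms $\tfrac{9}{16}\eps^{p_k}\|X_{k-1}AU_j\|_{L^2}^2$ and $\eps^{p_k}2^{-2j}\|X_{k-1}B^aU_j\|_{L^2}^2$; the right-hand side yields the doubled cross term $-2\eps^{p_k}2^{-2j}(X_{k-1}B^aU_j,\,X_{k-1}A\partial_xU_j)$ together with exactly the remainders already controlled in the proofs of Lemmas~\ref{lemma:left}--\ref{lemma:right}. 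I reproduce those estimates verbatim, with one change: wherever the proof of Lemma~\ref{lemma:left} retained $\eps^{p_k+q_k}\|X_{k-1}A^2U_j\|_{L^2}^2$ or $\eps^{p_k+q_k}2^{-2j}\|X_{k-1}AB^aU_j\|_{L^2}^2$ as higher-level terms, I instead invoke Assumption~\ref{ass1} to bound both by $C\eps^{p_k-q_k}\sum_{i=0}^{k-1}\|X_iU_j\|_{L^2}^2$, which is precisely why these two terms are absent from the conclusion. Note also that in this mixed case the Cayley--Hamilton step of Lemma~\ref{lemma:left} absorbing $X_{k-1}B^aU_j$ is no longer available, since here $\rank(M_{B^a})>r$; this is exactly why the cross term must be cancelled rather than absorbed.

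Next I would differentiate $2m\eps^{p_k}2^{-2j}(X_{k-1}AU_j,\,X_{k-1}B^aAU_j)$ using $\partial_tU_j=-(A\partial_x+B^a+B^s)U_j$, obtaining six terms. The term $-2m\eps^{p_k}2^{-2j}(X_{k-1}AU_j,\,X_{k-1}B^aA^2\partial_xU_j)$, after one integration by parts in $x$ (the constant matrices commuting with $\partial_x$), becomes $+2m\eps^{p_k}2^{-2j}(X_{k-1}A\partial_xU_j,\,X_{k-1}B^aA^2U_j)$, which by \eqref{cancellation11} equals $+2\eps^{p_k}2^{-2j}(X_{k-1}A\partial_xU_j,\,X_{k-1}B^aU_j)$ and hence cancels the doubled cross term of the previous step. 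Of the five remaining terms, those carrying $X_{k-1}A^2$ or $X_{k-1}AB^a$ are reduced by Assumption~\ref{ass1}, the one carrying $X_{k-1}B^aAB^a$ by Assumption~\ref{ass3} (the only place the latter is used, and the reason it must reach level $k+2$), and those carrying $B^s$ are bounded using $\|B^sU_j\|_{L^2}=\|X_0U_j\|_{L^2}$; each is then split by Young's inequality with weight $\eps^{q_k}$, sending one half into $C\eps^{p_k-q_k}\sum_{i=0}^{k-1}\|X_iU_j\|_{L^2}^2$ and the other half into $\eps^{p_k+q_k}$ times one of $2^{-2j}\|X_{k-1}B^aAU_j\|_{L^2}^2$, $2^{-4j}\|X_{k-1}(B^a)^2U_j\|_{L^2}^2$, $2^{-2j}\|X_{k-1}AU_j\|_{L^2}^2$ or $2^{-2j}\|X_{k-1}B^aU_j\|_{L^2}^2$. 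Collecting everything, the dissipative terms start with coefficients $\tfrac{9}{16}\eps^{p_k}$ and $\eps^{p_k}$; the fixed fractions coming from the Lemma~\ref{lemma:left}--\ref{lemma:right} remainders and the $\eps^{p_k+q_k}2^{-2j}$-contributions are absorbed for $\eps$ small, while the leftover $\eps^{p_k}2^{-2j}$-contributions to $\|X_{k-1}AU_j\|_{L^2}^2$ (whose dissipation carries no $2^{-2j}$) are absorbed by taking $j_0$ large — this is the sole role of the hypothesis $j\ge j_0$. What remains is the claimed inequality, with coefficient at least $\tfrac{\eps^{p_k}}{4}$ on each dissipative term, the lower-level terms collected into $C\eps^{p_k-q_k}\sum_{i=0}^{k-1}\|X_iU_j\|_{L^2}^2$, and the two higher-level terms $\eps^{p_k+q_k}2^{-2j}\|X_{k-1}B^aAU_j\|_{L^2}^2$ and $\eps^{p_k+q_k}2^{-4j}\|X_{k-1}(B^a)^2U_j\|_{L^2}^2$.

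The main obstacle is the bookkeeping of the third step: pinpointing which of the six terms from the correction functional actually carries the cross term (it is the one in which $\partial_x$ lands on the inner $A$, surfacing only after the integration by parts), and verifying that each of the five remaining terms genuinely lands in one of the three admissible buckets — in particular that no uncontrolled $\|X_{k-1}B^aAU_j\|_{L^2}^2$ without a compensating $2^{-2j}$ is ever produced. This is precisely where Assumptions~\ref{ass1}--\ref{ass3} are all simultaneously needed, and why the mixed case requires strictly stronger structural hypotheses than the pure left and right cases.
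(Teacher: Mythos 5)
Your proposal is correct and follows essentially the same route as the paper's proof: sum the identities \eqref{eq:left} and \eqref{eq:right}, cancel the doubled cross term using the first term of \eqref{eq:fullmix} after an integration by parts together with \eqref{cancellation11}, control the remaining five terms via Assumptions \ref{ass1} and \ref{ass3} and the presence of $B^s$, and take $j_0$ large to absorb the residual $2^{-j}$-weighted contributions to the dissipative norms. (Incidentally, your attribution of the cancellation to Assumption \ref{ass2} and of the second and fifth terms of \eqref{eq:fullmix} to Assumption \ref{ass1} is the consistent reading; the paper's prose swaps these two labels, although its displayed computations agree with yours.)
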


\begin{proof}[Proof of Lemma \ref{lemma:magic}]
A direct computation yields
     \begin{align}
     \label{eq:fullmix}
    \begin{split}
        2^{-2j}\partial_t \bigl( X_{k  - 1} A U_j ,\,  X_{k  - 1} B^a A U_j  \bigl) =
        &- 2^{-2j}\bigl( X_{k  - 1} A U_j , \,  X_{k  - 1} B^a A^2 \partial_x U_j \bigl) \\
        &- 2^{-2j}\bigl( X_{k  - 1} A B^a U_j , \,  X_{k  - 1} B^a A U_j \bigl) \\
        &- 2^{-2j}\bigl( X_{k  - 1} A B^s U_j , \,  X_{k  - 1} B^a A U_j \bigl) \\
        &- 2^{-2j}\bigl( X_{k  - 1} A U_j , \,  X_{k  - 1} B^a A B^s U_j \bigl) \\  
        &- 2^{-2j}\bigl( X_{k  - 1} A^2 \partial_x U_j , \,  X_{k  - 1} B^a A U_j \bigl) \\
        &- 2^{-2j}\bigl( X_{k  - 1} A U_j , \,  X_{k  - 1} B^a A B^a U_j \bigl).  
    \end{split}
    \end{align}
    In light of Assumption \ref{ass1}, the first term on the right-hand side will cancel out with the scalar products appearing when summing up \eqref{eq:left} with \eqref{eq:right}. The second and fifth terms can be controlled thanks to Assumption \ref{ass2}. Indeed,
    \begin{align*}
    2^{-2j}\bigl( X_{k  - 1} A B^a U_j , \,  X_{k  - 1} B^a A U_j \bigl) &= 2^{-2j}\sum_{j=0}^{k  - 1}  \beta_j\bigl(X_j U_j , X_{k  - 1} B^a A U_j \bigl) \\
    &\leq \frac{C2^{-2j}}{\eps^{q_k }}\sum_{i=0}^{k  - 1} \|X_i U_j \|_{L^2}^2 + \eps^{q_k }2^{-2j}\|X_{k  - 1}B^aAU_j \|_{L^2}^2,
    \end{align*}
    and
    \begin{align*}
    2^{-2j}\bigl( X_{k  - 1} A^2\partial_x U_j , \,  X_{k  - 1} B^a A U_j \bigl) &= 2^{-2j}\sum_{i=0}^{k  - 1}  \alpha_i \bigl(X_i \partial_x U_j , X_{k  - 1} B^a A U_j \bigl) \\
    &\leq \frac{C}{\eps^{q_k }}\sum_{i=0}^{k  - 1} \|X_i U_j \|_{L^2}^2 + \eps^{q_k }2^{-2j}\|X_{k  - 1}B^aAU_j \|_{L^2}^2.
    \end{align*}
    The third and fourth terms can be controlled thanks to the presence of $B^s U_j $:
    \begin{align*}
        -&\bigl( X_{k  - 1} A B^s U_j , \,  X_{k  - 1} B^a A U_j \bigl) -\bigl( X_{k  - 1} A U_j , \,  X_{k  - 1} B^a A B^s U_j \bigl) \\
    &\leq
    \eps^{q_k }\|X_{k  - 1}AU_j \|_{L^2}^2 + 
    \eps^{q_k }\|X_{k  - 1}B^aAU_j \|_{L^2}^2 + \frac{C}{\eps^{q_k }}\|B^sU_j \|_{L^2}^2.
    \end{align*}
    Finally, we use Assumption \ref{ass3} to deal with the last term. We have
    \begin{align*}
        &2^{-2j}\bigl( X_{k  - 1} A U_j , \,  X_{k  - 1} B^a A B^a U_j \bigl) \\
        &= 2^{-2j}\sum_{i=0}^{k  - 1} \gamma_i \bigl(X_{k  - 1}AU_j , X_i U_j \bigl) + \gamma_k 2^{-2j} \bigl(X_{k  - 1}AU_j , X_{k  - 1}B^aU_j \bigl) \\ 
        &\quad+ \gamma_{k +1} 2^{-2j}\bigl(X_{k  - 1}AU_j , X_{k  - 1}B^aA U_j \bigl)  \\
        &\leq \frac{C}{\eps^{q_k }}2^{-2j}\sum_{i=0}^{k  - 1}\|X_iU_j \|_{L^2}^2 + \left(\eps^{q_k }2^{-2j} + C2^{-2j} +\frac{C 2^{-j}}{2}\right) \|X_{k  - 1}AU_j \|_{L^2}^2 \\ &\quad+ \frac{C}{2}2^{-3j}\|X_{k  - 1} B^a U_j \|_{L^2}^2 + \frac{1}{2}\cdot 2^{-3j}\|X_{k  - 1}B^aAU_j \|_{L^2}^2.
    \end{align*}
    The control for the derivatives of the other terms appearing in $\Phi_m^k $ is precisely the same as Lemma \ref{lemma:left} and Lemma \ref{lemma:right}. Collecting the latter inequalities, as well as the inequalities obtained along this proof, and recalling assumptions \ref{ass2} and \ref{ass3}, we arrive at
    \begin{align*}
        2^{-2j}\partial_t \Phi_m^k  &+ \left(\frac{1}{2} - \frac{C}{2} 2^{-j}\right) 2^{-2j}\|X_{k  - 1} B^a U_j \|_{L^2}^2 + \left(\frac{1}{2} - \frac{C}{2}2^{-j}\right) \|X_{k  - 1}AU_j \|_{L^2}^2 \\
        &\leq \eps^{q_k }2^{-2j}\|X_{k  - 1}B^aA U_j \|_{L^2}^2 + \frac{1}{2}\cdot2^{-3j}\|X_{k  - 1}B^aAU_j \|_{L^2}^2 \\
        &+ \eps^{q_k }2^{-4j}\|X_{k  - 1}(B^a)^2U_j \|_{L^2}^2 + C\eps^{p_k -q_k }\|X_{k  - 1}U_j \|_{L^2}^2 + C\sum_{i=0}^{k -2}\|X_i U_j \|_{L^2}^2.
    \end{align*}
    Choosing $j_0$ large enough such that
    \[
    \frac12 - \frac{C}{2}2^{-j_0} \geq \frac{1}{4},
    \]
    we attain the desired inequality.
\end{proof}
\begin{remark}
    Here, we needed the frequency threshold $j_0$ to be large enough in order to close the estimates. Note that $U_j^h=0$ for $j\leq -1$. To deal with $0\leq j\leq j_0-1$, which is composed of a finite number of frequencies, one can use the computations from Section \ref{sec:3} (for instance $\eqref{LLY}$) and deduce exponential decay as $2^j$ is bounded from below and above in this frequency region.
\end{remark}

\medskip
\noindent
\textbf{Case 2:} The second case is similar to the first one. Now it is $X_{k  - 1}B^a$ that is the end node. Hence, we have the following set of assumptions.
\begin{assumption}
    \label{ass4}
     There exist $\alpha_i, \beta_i$, $i = 0, \ldots, k -1$, such that 
        \[
        X_{k  - 1}(B^a)^2U_j  = \sum_{i=0}^{k  - 1} \alpha_i X_iU_j  \quad \text{ and } \quad
        X_{k  - 1}B^aAU_j  = \sum_{i=0}^{k  - 1} \beta_i X_iU_j .
        \]
\end{assumption}
\begin{assumption}
    \label{ass5}
    There exists $m \in \R$ such that
        \begin{equation}
        \label{cancellation12}
            \bigl(m X_{k  - 1} AB^aA \partial_x U_j  -  X_{k  - 1} A \partial_x U_j , \, X_{k  - 1} B^a U_j  \bigl) = 0.
        \end{equation}
\end{assumption}

\begin{assumption}
     \label{ass6} 
     There exist $\gamma_i$, $i = 0, \ldots, k $, such that
        \[
        X_{k  - 1}A(B^a)^2U_j  = \gamma_k  X_{k  - 1}AU_j  + \sum_{i=0}^{k  - 1} \gamma_i X_iU_j .
        \]        
\end{assumption}
Note that, with respect to the previous case, we ask that $\gamma_{k +1} = 0$. 
We then associate to the left node the functional 
\[
\Tilde{\Phi}^k _A := \Phi^k _A + m\eps^{p_k }2^{-2j}\bigl( X_{k  - 1} AB^a U_j ,\,  X_{k  - 1} B^a U_j  \bigl),
\]
and to the right node the functional
\[
\Tilde{\Phi}^k _{B^a} := \Phi^k _{B^a} + m\eps^{p_k }2^{-2j}\bigl( X_{k  - 1} AB^a U_j ,\,  X_{k  - 1} B^a U_j  \bigl).
\]
In this case, we associate to both nodes the discrepancy
\[
\d_k  = 0.
\]
Hence, the final functional reads 
\[
\Phi^k _m := \Tilde{\Phi}^k _A + \Tilde{\Phi}^k _{B^a}.
\]
We are now in a position to state the lemma.
\begin{lemma}
\label{lemma:magic2}
    Let $X_{k  - 1}$ be a node at the level $k -1$ and let \eqref{eq:rankmix} and $\rank(M_A) > r$ hold. Let also Assumptions \ref{ass4}, \ref{ass5} and \ref{ass6} be satisfied. 
    Then,
    \begin{align*}
       \partial_t \Phi^k _m &+ \frac{\eps^{p_k }}{4}2^{-2j} \|X_{k  - 1} B^a U_j \|_{L^2}^2 + \frac{\eps^{p_k }}{4}\| X_{k  - 1} A U_j \|_{L^2}^2 \\
        &\leq \eps^{p_k  + q_k }\|X_{k  - 1}A^2 U_j \|_{L^2}^2 + \eps^{p_k  + q_k }2^{-2j}\|X_{k  - 1}AB^aU_j \|_{L^2}^2 + C\eps^{p_k -q_k }\sum_{i=0}^{k  - 1}\|X_i U_j \|_{L^2}^2.
    \end{align*}
\end{lemma}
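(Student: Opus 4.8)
The plan is to mimic the structure of the proof of Lemma~\ref{lemma:magic}, with the roles of $A$ and $B^a$ interchanged where appropriate, since now $X_{k-1}B^a$ (rather than $X_{k-1}A$) is the dead-end node. First I would write down the differential identity for the extra term $\partial_t\bigl(X_{k-1}AB^aU_j,\,X_{k-1}B^aU_j\bigl)$ by differentiating in time and substituting $\partial_tU_j = -(A\partial_x + B^a + B^s)U_j$ in both slots. This produces six terms on the right-hand side: one involving $X_{k-1}AB^aA\partial_xU_j$ paired with $X_{k-1}B^aU_j$, which is designed (via Assumption~\ref{ass5}) to cancel the obstructive scalar product $-2(X_{k-1}A\partial_xU_j,X_{k-1}B^aU_j)$ that arises when summing the identities \eqref{eq:left} and \eqref{eq:right}; a term with $X_{k-1}A(B^a)^2U_j$ paired with $X_{k-1}B^aU_j$, controlled by Assumption~\ref{ass6}; a term with $X_{k-1}AB^aU_j$ paired with $X_{k-1}(B^a)^2U_j$, controlled by Assumption~\ref{ass4}; a term with $X_{k-1}AB^aB^aU_j$ paired with $X_{k-1}B^aU_j$, again handled by Assumption~\ref{ass4}; and two terms carrying the factor $B^sU_j$, absorbed by a Young inequality into $\|B^sU_j\|_{L^2}^2 = \|X_0U_j\|_{L^2}^2$.

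Next I would carry out the estimates term by term. Using the Bernstein property (Lemma~\ref{lem:Bernstein}), each $\partial_x$ converts into a factor $2^j$, and in the high-frequency regime the weights $2^{-2j}$, $2^{-4j}$ are used to absorb these. The key cancellation step: by Assumption~\ref{ass5}, $m(X_{k-1}AB^aA\partial_xU_j,X_{k-1}B^aU_j) = (X_{k-1}A\partial_xU_j,X_{k-1}B^aU_j)$, so adding $m\eps^{p_k}2^{-2j}$ times this identity to $\Phi^k_A + \Phi^k_{B^a}$ exactly removes the double cross term and leaves a clean sum $\tfrac{\eps^{p_k}}{2}(\|X_{k-1}AU_j\|_{L^2}^2 + 2^{-2j}\|X_{k-1}B^aU_j\|_{L^2}^2)$ on the left, up to acceptable errors. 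The remaining Cayley--Hamilton-type reductions (Assumptions~\ref{ass4},\ref{ass6}) convert the higher nodes $X_{k-1}(B^a)^2U_j$, $X_{k-1}B^aAU_j$, $X_{k-1}A(B^a)^2U_j$ into linear combinations $\sum_{i=0}^{k-1}\|X_iU_j\|_{L^2}^2$, which land in the right-hand side of the claimed inequality with coefficient $C\eps^{p_k-q_k}$ after choosing the Young weights proportional to powers of $\eps^{q_k}$.

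Finally I would collect all the inequalities, multiply the time-derivative identity for the new term by $m\eps^{p_k}$, add it to $m\eps^{p_k}$ times nothing else and to the (already multiplied by $\eps^{p_k}$) inequalities \eqref{leftpath} and \eqref{rightpath1} from Lemmas~\ref{lemma:left} and \ref{lemma:right}, and verify that the leftover coefficients on $\|X_{k-1}AU_j\|_{L^2}^2$ and $2^{-2j}\|X_{k-1}B^aU_j\|_{L^2}^2$ stay bounded below by $\eps^{p_k}/4$. Note that, unlike Lemma~\ref{lemma:magic}, here the surviving higher-order dissipation terms on the right are $\eps^{p_k+q_k}\|X_{k-1}A^2U_j\|_{L^2}^2$ and $\eps^{p_k+q_k}2^{-2j}\|X_{k-1}AB^aU_j\|_{L^2}^2$ — i.e.\ the children of the \emph{left} node $X_{k-1}A$, reflecting that the path will continue in the left direction — and that the discrepancy is $\d_k = 0$, consistent with Assumption~\ref{ass6} demanding $\gamma_{k+1}=0$ (no feedback from level $k+1$). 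The main obstacle is bookkeeping the signs and the powers of $\eps$ so that every error term is strictly subordinate: one must check that $p_k - q_k > p_{k-1}$ and $p_k + q_k > p_{k+1}$ (guaranteed by admissibility, Definition~\ref{admissible}) so the $\sum_{i=0}^{k-1}$ terms and the level-$(k+1)$ terms are genuinely lower order than the dissipation $\eps^{p_k}$ being gained; since we work purely in high frequencies here, no frequency threshold $j_0$ is needed (the $2^{-j}$ factors only help), which is why the statement holds for all $j$ rather than $j \geq j_0$.
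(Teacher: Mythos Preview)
Your proposal is correct and follows essentially the same route as the paper: write the time-derivative identity for the auxiliary term $2^{-2j}\bigl(X_{k-1}AB^aU_j,\,X_{k-1}B^aU_j\bigr)$, use Assumption~\ref{ass5} to cancel the obstructive cross term arising from summing \eqref{eq:left} and \eqref{eq:right}, reduce the remaining terms via Assumptions~\ref{ass4} and \ref{ass6}, and collect. One small caveat: you cannot invoke the \emph{final} inequalities \eqref{leftpath} and \eqref{rightpath1} as black boxes, since those rely on the rank conditions $\rank(M_{B^a})=r$ and $\rank(M_A)=r$ respectively, both of which fail in the mixed case---you must instead restart from the raw identities \eqref{eq:left} and \eqref{eq:right} (as the paper does implicitly) and let the extra functional absorb precisely the cross term that those lemmas previously handled via Cayley--Hamilton.
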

\begin{proof}[Proof of Lemma \ref{lemma:magic2}]
The counterpart of Equality \eqref{eq:fullmix} reads
     \begin{align}
    \begin{split}
        2^{-2j}\partial_t \bigl( X_{k-1} A^2 U_j ,\,  X_{k-1} B^a U_j  \bigl) =
        &- 2^{-2j}\bigl( X_{k-1} AB^a U_j , \,  X_{k-1} B^a A \partial_x U_j \bigl) \\
        &- 2^{-2j}\bigl( X_{k-1} A (B^a)^2 U_j , \,  X_{k-1} B^a U_j \bigl) \\
        &- 2^{-2j}\bigl( X_{k-1} AB^a B^s U_j , \,  X_{k-1} B^a U_j \bigl) \\
        &- 2^{-2j}\bigl( X_{k-1} AB^a U_j , \,  X_{k-1} B^a B^s U_j \bigl) \\  
        &- 2^{-2j}\bigl( X_{k-1} AB^aA \partial_x U_j , \,  X_{k-1} B^a U_j \bigl) \\
        &- 2^{-2j}\bigl( X_{k-1} AB^a U_j , \,  X_{k-1} (B^a)^2 U_j \bigl).  
    \end{split}
    \end{align}
In this case, by Assumption \ref{ass4}, the fifth term cancels out. The rest of the proof is the same as that of Lemma \ref{lemma:magic}, the only modification being how we treat the second term. Indeed, now we have
\begin{equation*}
\begin{aligned}
    - &2^{-2j}\bigl( X_{k-1} A (B^a)^2 U_j , \,  X_{k -1} B^a U_j \bigl) \\ 
    &= -2^{-2j}\gamma_k \bigl(X_{k  - 1}AU_j, X_{k -1}B^aU_j\bigl) +2^{-2j} \sum_{i=0}^{k  - 1}\gamma_i\bigl(X_iU_j , X_{k -1}B^aU_j\bigl) \\
    &\leq 2^{-j}C\|X_{k -1}AU_j\|_{L^2}^2 + 2^{-3j}C \|X_{k -1}B^aU_j\|_{L^2}^2 + C \sum_{i=0}^{k -1}\|X_i U_j\|_{L^2}^2.
\end{aligned}
\end{equation*}
The proof is concluded in the same exact way as in the previous case.
\end{proof}

\subsection*{Case (4): Left OR Right} As described earlier, we simply choose $X_{k -1}A$. Accordingly, the functional is $\Phi_A^k $ and the discrepancy is $\d_k  = 0$.

\subsection*{Case (5): End of the path} If the rank of the matrices $M_A$ and $M_{B^a}$ is equal to $r$, it indicates that no additional dissipation can be achieved by progressing further down the tree. At this point, we check the value of $r$. If $r = n$, we have successfully recovered the dissipation for all the variables appearing in the system. Otherwise, we move to the next node (i.e., the node on the right) at the same level $k -1$. Moreover, if $r<n$, there always exists a node such that one of the previous three cases applies. Indeed, if this were not the case, then every term of $B^s(A\partial_x + B^a)^k  U_j$ would be a combination of previous terms. Consequently, it is immediate to check that the inhomogeneous Kalman condition \eqref{condition:span} would fail to hold, contradicting the assumption at the beginning of this section.

\subsection{Cancellations} The main advantage of the algorithm we have described is that it allows us to automatically discover conditions under which cancellations occur. In Lemma \ref{lemma:magic}, we have already seen how canceling a term in a differential equality can help to recover additional dissipation. Now we focus on two additional situations in which we can improve our decay estimates. We will work within the hypothesis
\begin{equation}
    \label{cond}
    X_{k -1}(B^a)^2U_j = \sum_{i=0}^{k } \delta_i X_i U_j
\end{equation}
where the $\delta_i$ are constants.
We remark that this assumption is not necessary to improve the decay rate. Although it would be possible to remove it, it greatly simplifies the analysis and the exposition.

\subsubsection{Cancellation in the right path} When we take the right path moving from one level of the tree to the next one, we have the differential equality \eqref{eq:right}.
The terms containing the derivative on the right-hand side require us to weight the estimate by $2^{-2j}$, effectively losing a derivative, as we can only recover the norm $2^{-2j}\|X_{k -1}B^a U_j \|_{L^2}^2$. However, there are situations in which these terms simply cancel out. If this is the case, \eqref{eq:right} simplifies into
\begin{align*}
        \partial_t \bigl(X_{k -1} U_j ,\, X_{k -1} B^a U_j  \bigl) + \|X_{k -1} B^a U_j \|_{L^2}^2 =  
        &- \bigl(X_{k -1} B^s U_j , \, X_{k -1} B^a U_j \bigl) \\
        &- \bigl(X_{k -1} U_j , \, X_{k -1} B^a B^s U_j \bigl) \\
        &- \bigl(X_{k -1} U_j , \, X_{k -1} (B^a)^2 U_j \bigl).
\end{align*}
At that point, by hypothesis \eqref{cond}, we have the following
\begin{align*}
        \partial_t \bigl(X_{k -1} U_j ,\, X_{k -1} B^a U_j  \bigl) + \frac{3}{4}\|X_{k -1} B^a U_j \|_{L^2}^2 \leq  
        C\|B^s U_j \|_{L^2}^2 + \frac{C}{\eps^{p_k }}\sum_{i=0}^{k -1}\|X_iU_j \|_{L^2}^2.
\end{align*}
In this specific case, we associate a discrepancy $\d_k  = 0$. 



\subsubsection{Cancellation in the mixed path} Another cancellation effect may occur in the mixed path. This is by far the most interesting situation, as it appears in several physical models that exhibit different behaviors depending on the parameters of the system. A classical example is the Timoshenko system, which is known to have better decay properties when the so-called \emph{equal wave speed} condition holds. As we will see, the latter appears naturally as a cancellation in the mixed path functional. 

Assume we are in Case 1 and consider the functional 
\[
\Phi = \bigl(X_{k -1} U_j ,\, X_{k -1} B^a U_j  \bigl) + m\bigl( X_{k -1} A U_j ,\,  X_{k -1} B^a A U_j  \bigl).
\] 
If Assumption \ref{ass2} holds, taking the time derivative, we obtain (see equation \eqref{eq:fullmix})
\begin{align*}
\partial_t \Phi + \|X_{k -1} B^a U_j \|_{L^2}^2 =  
&- \bigl(X_{k -1} U_j  - m X_{k -1}A^2 U_j , \, X_{k -1} B^a A \partial_x U_j \bigl) \\
&+ \text{ lower order terms }.
\end{align*}
Note that $m$ here is fixed by Assumption \ref{ass2}. If the additional cancellation condition
\begin{equation}
\label{cancellation21}
\bigl(X_{k -1} U_j  - m X_{k -1}A^2 U_j , \, X_{k -1} B^a A \partial_x U_j \bigl) = 0
\end{equation}
holds, then we can redefine $\Tilde{\Phi}^k _{B^a}$ in Lemma \ref{lemma:magic} to be simply
\[
\Tilde{\Phi}^k _{B^a} = \eps^{p_{k +1}}\Phi.
\]
In this case, the discrepancy associated with both nodes is \[\d_k  = 0.\]
By the same token, in Case 2 we have the similar condition
\begin{equation}
\label{cancellation22}
\bigl(X_{k -1} U_j  - m X_{k -1}AB^a U_j , \, X_{k -1} B^a A \partial_x U_j \bigl) = 0.
\end{equation}
In general, since $m$ is fixed by assumptions \ref{ass2} or \ref{ass5}, Equations \eqref{cancellation21}-\eqref{cancellation22} can be read as restrictions on the coefficients of the system which, if satisfied, provide better decay rates. However, note that both conditions are also satisfied when $X_{k -1}B^aAU_j  = 0$. This is very relevant in concrete physical settings when $A$ and $B^a$ are often sparse. 
We have the following result.
\begin{lemma}
    Let the assumptions of Lemma \ref{lemma:magic} hold. Furthermore, assume that \eqref{cond} and \eqref{cancellation21} hold. Then,
    \begin{align*}
        \partial_t \Phi^{k }_m &+ \frac{\eps^{p_k }}{4}\|X_{k -1}AU_j\|_{L^2}^2 + \frac{\eps^{p_{k +1}}}{2}\|X_{k -1}B^aU_j\|_{L^2}^2 \\
        &\qquad\leq C\eps^{p_k  - q_k }\sum_{j=0}^{k -1}\|X_jU_j\|_{L^2}^2 + \eps^{p_{k +1}+q_{k +1}}\|X_{k -1}B^aAU_j\|_{L^2}^2.
    \end{align*}
\end{lemma}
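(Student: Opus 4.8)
The plan is to combine the differential equality for $\Phi_m^k$ already established in the proof of Lemma \ref{lemma:magic} with the extra cancellation \eqref{cancellation21} and the closure hypothesis \eqref{cond}. Recall that, under the assumptions of Lemma \ref{lemma:magic}, taking the time derivative of
\[
\Phi_m^k = \Tilde\Phi_A^k + \Tilde\Phi_{B^a}^k,
\]
with $\Tilde\Phi_{B^a}^k$ now chosen as $\eps^{p_{k+1}}\Phi$ with $\Phi = \bigl(X_{k-1}U_j,\,X_{k-1}B^aU_j\bigl) + m\bigl(X_{k-1}AU_j,\,X_{k-1}B^aAU_j\bigl)$, produces on the left-hand side the two positive quantities $\tfrac{\eps^{p_k}}{2}\|X_{k-1}AU_j\|_{L^2}^2$ (coming from $\Phi_A^k$, exactly as in Lemma \ref{lemma:left}) and $\eps^{p_{k+1}}\|X_{k-1}B^aU_j\|_{L^2}^2$ (coming from $\Phi$, with no $2^{-2j}$ weight since we are in the cancellation regime). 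The key point is that the scalar product $\bigl(X_{k-1}U_j - mX_{k-1}A^2U_j,\,X_{k-1}B^aA\partial_xU_j\bigl)$, which in the general case forces the loss of a derivative, now vanishes identically by \eqref{cancellation21}, so no $2^{-2j}$-weighting of $\|X_{k-1}B^aU_j\|_{L^2}^2$ is needed.

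First I would write out $\partial_t\Phi$ using equality \eqref{eq:fullmix} together with the analogue of \eqref{eq:right}, isolating the dangerous derivative term and striking it out by \eqref{cancellation21}. What remains are: (a) terms of the form $(X_{k-1}\cdot B^sU_j,\,\cdot)$, which are harmless because $B^sU_j$ is a dissipated quantity and can be absorbed into $C\|B^sU_j\|_{L^2}^2 \le C\|X_0U_j\|_{L^2}^2$; (b) the term $\bigl(X_{k-1}U_j,\,X_{k-1}(B^a)^2U_j\bigl)$, which by hypothesis \eqref{cond} expands as a combination of $X_iU_j$ for $i=0,\dots,k$ — hence, after Young's inequality, it is bounded by $C\eps^{-q_k}\sum_{i=0}^{k-1}\|X_iU_j\|_{L^2}^2$ plus a small multiple of $\|X_{k-1}B^aU_j\|_{L^2}^2$ (which is absorbed by the left-hand side) and a small multiple of $\|X_{k-1}AU_j\|_{L^2}^2$ (also absorbed); (c) the terms coming from Assumptions \ref{ass1} and \ref{ass3}, namely $\bigl(X_{k-1}AB^aU_j,\cdot\bigl)$, $\bigl(X_{k-1}A^2\partial_xU_j,\cdot\bigl)$ and $\bigl(X_{k-1}AU_j,\,X_{k-1}B^aAB^aU_j\bigl)$, handled exactly as in Lemma \ref{lemma:magic} and producing only $\sum_{i=0}^{k-1}\|X_iU_j\|_{L^2}^2$ contributions with $\eps$-weights that are subcritical relative to $p_k$; (d) the genuinely higher-order term $\eps^{p_{k+1}+q_{k+1}}\|X_{k-1}B^aAU_j\|_{L^2}^2$, which is the node at the next level and is kept on the right-hand side to be absorbed later in the algorithm.

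Then I would also bound $\partial_t\Phi_A^k$ by Lemma \ref{lemma:left}; the leftover $\eps^{p_k+q_k}\|X_{k-1}A^2U_j\|_{L^2}^2$ and $\eps^{p_k+q_k}2^{-2j}\|X_{k-1}AB^aU_j\|_{L^2}^2$ terms are controlled by Assumption \ref{ass1}, which expresses $X_{k-1}A^2U_j$ and $X_{k-1}AB^aU_j$ as combinations of $X_iU_j$, $i\le k-1$, so these too collapse into $C\eps^{p_k+q_k}\sum_{i=0}^{k-1}\|X_iU_j\|_{L^2}^2 \le C\eps^{p_k-q_k}\sum_{i=0}^{k-1}\|X_iU_j\|_{L^2}^2$. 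Summing the estimates for $\partial_t\Phi_A^k$ and $\eps^{p_{k+1}}\partial_t\Phi$, and choosing $\eps$ small enough (so that the cross terms $\eps^{q_k}\|X_{k-1}AU_j\|^2$ and the $2^{-3j}$-type remainders, which are bounded uniformly in $j$ in high frequencies, do not exceed a quarter of the corresponding positive terms), yields precisely the claimed inequality with the constant $\tfrac14$ in front of $\|X_{k-1}AU_j\|_{L^2}^2$ and $\tfrac12$ in front of $\eps^{p_{k+1}}\|X_{k-1}B^aU_j\|_{L^2}^2$, and with right-hand side $C\eps^{p_k-q_k}\sum_{j=0}^{k-1}\|X_jU_j\|_{L^2}^2 + \eps^{p_{k+1}+q_{k+1}}\|X_{k-1}B^aAU_j\|_{L^2}^2$.

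The main obstacle is bookkeeping rather than analysis: one must check that every $\eps$-exponent produced along the way is strictly larger than $p_k - q_k$ (or, for the higher-level node, $p_{k+1}+q_{k+1}$), which is exactly where admissibility of $\{p_k,q_k\}$ (Definition \ref{admissible}, in particular $q_k < p_k-p_{k-1}<q_{k-1}$ and $p_{k-1}<p_k-q_k$, $p_k+q_k>p_{k+1}$) is used; and one must verify that the terms carrying the worst $2^{-j}$ powers — here $2^{-3j}\|X_{k-1}B^aAU_j\|_{L^2}^2$ and the $2^{-j}$-weighted $\|X_{k-1}AU_j\|_{L^2}^2$ appearing via Assumption \ref{ass3} — are indeed lower order in the high-frequency region $j\ge j_0$, so that they can be swallowed by the positive left-hand side after enlarging $j_0$ if necessary (the finitely many frequencies $0\le j<j_0$ being treated separately by the exponential-decay estimate \eqref{LLY} from Section \ref{sec:3}). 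Once these uniform-in-$j$ smallness checks are in place, the inequality follows by assembling the pieces.
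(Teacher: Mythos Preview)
Your proposal is correct and follows essentially the same approach as the paper: derive an inequality for $\Tilde\Phi_A^k$ (the paper's \eqref{phiA}), then expand $\partial_t\Phi$ using Assumption~\ref{ass2} and the cancellation \eqref{cancellation21}, estimate each surviving term via \eqref{cond} and Assumptions~\ref{ass1},~\ref{ass3} with Young's inequality at exponent $q_{k+1}$, and finally sum the two pieces using admissibility so that the $\eps^{p_{k+1}}\|X_{k-1}AU_j\|_{L^2}^2$ term is absorbed by the $\eps^{p_k}$ one.

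One small bookkeeping point: when you write ``bound $\partial_t\Phi_A^k$ by Lemma~\ref{lemma:left}'', remember that in this setting $\Tilde\Phi_A^k$ is \emph{not} redefined and still carries the extra cross term $m\eps^{p_k}2^{-2j}\bigl(X_{k-1}AU_j,X_{k-1}B^aAU_j\bigl)$; its time derivative (handled via \eqref{eq:fullmix} exactly as in Lemma~\ref{lemma:magic}) is what produces the $\eps^{p_k}2^{-2j}\|X_{k-1}B^aU_j\|_{L^2}^2$ and $\eps^{p_k}2^{-2j}\|X_{k-1}B^aAU_j\|_{L^2}^2$ terms on the right of \eqref{phiA}, which are then swallowed in high frequency by the corresponding unweighted norms on the left. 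Once this is written out, the argument closes as you describe.
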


\begin{proof}
    Keeping in mind our assumptions, a simple application of the Young inequality to every term of \eqref{eq:fullmix}, in conjunction with \eqref{leftpath}, yields
    \begin{align}
    \label{phiA}
    \begin{split}
            \partial_t \Tilde{\Phi}^k_A + \frac{\eps^{p_k }}{2}\|X_{k -1}AU_j\|_{L^2}^2 &\leq \eps^{p_k }2^{-2j}\|X_{k -1}B^aU_j\|_{L^2}^2 \\
            &\quad+ C\eps^{p_k  - q_k }\sum_{j=0}^{k -1}\|X_jU_j\|_{L^2}^2 + \eps^{p_k }2^{-2j}\|X_{k -1}B^aAU_j\|_{L^2}^2.
    \end{split}
    \end{align}
    Besides, in light of Assumption \ref{ass2} and \eqref{cancellation21}, we have
    \begin{align*}
        \partial_t \Phi + \|X_{k -1}B^aU_j\|_{L^2}^2 &= - \bigl(X_{k -1} B^s U_j, \, X_{k -1} B^a U_j\bigl) - \bigl(X_{k -1} U_j, \, X_{k -1} B^a B^s U_j\bigl) \\
        &- m\bigl( X_{k -1} A B^s U_j, \,  X_{k -1} B^a A U_j\bigl) - m\bigl( X_{k -1} A U_j, \,  X_{k -1} B^a A B^s U_j\bigl) \\ 
        &- \bigl(X_{k -1} U_j, \, X_{k -1} (B^a)^2 U_j\bigl) - m\bigl( X_{k -1} A B^a U_j, \,  X_{k -1} B^a A U_j\bigl) \\
        &-m \bigl( X_{k -1} A U_j, \,  X_{k -1} B^a A B^a U_j\bigl).
    \end{align*}
    The first two terms on the right-hand side contain $B^sU$, so they can be easily bounded as in Lemma \ref{lemma:right}, that is
    \begin{align*}
     &- \bigl(X_{k -1} B^s U_j, \, X_{k -1} B^a U_j\bigl) - \bigl(X_{k -1} U_j, \, X_{k -1} B^a B^s U_j\bigl) \\ &\qquad\leq \frac{C}{\eps^{q_{k +1}}}\|B^sU_j\|_{L^2}^2 + \eps^{q_{k +1}}\|X_{k -1}B^aU_j\|_{L^2}^2 + C\|X_{k -1}U_j\|_{L^2}^2.
    \end{align*}
    Similarly, the second line is controlled as
    \begin{align*}
     &- m\bigl( X_{k -1} A B^s U_j, \,  X_{k -1} B^a A U_j\bigl) - m\bigl( X_{k -1} A U_j, \,  X_{k -1} B^a A B^s U_j\bigl) \\ &\qquad\leq \frac{C}{\eps^{q_{k +1}}}\|B^sU_j\|_{L^2}^2 + \eps^{q_{k +1}}\|X_{k -1}B^aAU_j\|_{L^2}^2 + \eps^{q_{k +1}}\|X_{k -1}AU_j\|_{L^2}^2.
    \end{align*}
    Next, exploiting \eqref{cond}, we have
    \begin{align*}
     \bigl(X_{k -1} U_j, \, X_{k -1} (B^a)^2 U_j\bigl) &= \delta_k  \bigl(X_{k -1}U_j, X_{k -1}B^aU_j\bigl) + \sum_{i=0}^{k -1} \delta_i \bigl(X_{k -1}U_j, X_iU_j\bigl) \\
     &\leq \eps^{q_{k +1}}\|X_{k -1}B^aU_j\|_{L^2}^2 + \frac{C}{\eps^{q_{k +1}}}\sum_{i=0}^{k -1}\|X_iU_j\|_{L^2}^2.
    \end{align*}
    The final two terms are treated as in the proof of Lemma \ref{lemma:magic}, except this time we do not have the weight $2^{-2j}$. We have
    \[
   m \bigl( X_{k -1} A B^a U_j, \,  X_{k -1} B^a A U_j\bigl) \leq \frac{C}{\eps^{q_{k +1}}}\sum_{i=0}^{k -1} \|X_i U_j\|_{L^2}^2 + \eps^{q_{k +1}}\|X_{k -1}B^aAU_j \|_{L^2}^2
    \]
    and 
    \begin{align*}
   &m \bigl( X_{k -1} A U_j, \,  X_{k -1} B^a A B^a U_j\bigl) \\ &\quad\leq \frac{C}{\eps^{q_{k +1}}}\sum_{i=0}^{k -1}\|X_jU_j\|_{L^2}^2 + \|X_{k -1}AU_j\|_{L^2}^2 + \frac14\|X_{k -1}B^aU_j\|_{L^2}^2 +  \eps^{q_{k +1}}\|X_{k -1}B^aAU_j\|_{L^2}^2.
    \end{align*}
    Collecting together the obtained inequalities we find
    \begin{align*}
        \partial_t \Tilde{\Phi}^k_{B^a} + \frac{\eps^{p_{k +1}}}{2}\|X_{k -1}B^aU_j\|_{L^2}^2 &\leq C\eps^{p_{k +1} - q_{k +1}}\sum_{i=0}^{k -1}\|X_i U_j\|_{L^2}^2 \\
        &+ \eps^{p_{k +1}}\|X_{k +1}AU_j\|_{L^2}^2
        + \eps^{p_{k +1} + q_{k +1}}\|X_{k -1}B^a A U_j\|_{L^2}^2.
    \end{align*}
    Summing up to \eqref{phiA} we have the thesis.
\end{proof}
We limit ourselves to stating the version of the above lemma for the second set of assumptions, as the proof works the same way. The definition of $\Phi^{k }_m$ does not change.
\begin{lemma}
    Let the assumptions of Lemma \ref{lemma:magic2} hold. Furthermore, assume that \eqref{cancellation22} holds. Note that \eqref{cond} holds automatically thanks to Assumption \ref{ass4}. Then,
    \begin{align*}
        \partial_t \Phi^{k }_m &+ \frac{\eps^{p_k }}{4}\|X_{k -1}AU_j\|_{L^2}^2 + \frac{\eps^{p_{k +1}}}{2}\|X_{k -1}B^aU_j\|_{L^2}^2 \\
        &\qquad\leq C\eps^{p_k  - q_k }\sum_{j=0}^{k -1}\|X_jU_j\|_{L^2}^2 + \eps^{p_{k +1}+q_{k +1}}\|X_{k -1}A^2U_j\|_{L^2}^2 + \eps^{p_k +q_k }\|X_{k -1}AB^aU_j\|_{L^2}^2.
    \end{align*}
\end{lemma}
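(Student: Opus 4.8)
The plan is to run, almost verbatim, the proof of the Case~1 cancellation lemma above, swapping the roles of the nodes $X_{k-1}A$ and $X_{k-1}B^a$: now, by Assumption~\ref{ass4}, it is the left node $X_{k-1}B^a$ that is the dead end, so the extra gain is extracted along the $A$-branch, and, by the same Assumption, \eqref{cond} holds automatically with $\delta_k=0$. I would keep $\Tilde{\Phi}^k_A := \Phi^k_A + m\eps^{p_k}2^{-2j}\bigl(X_{k-1}AB^aU_j,\,X_{k-1}B^aU_j\bigr)$ as in the Case~2 definition, but, exploiting the extra cancellation \eqref{cancellation22}, redefine the right piece to be $\Tilde{\Phi}^k_{B^a} := \eps^{p_{k+1}}\Phi$ with $\Phi := \bigl(X_{k-1}U_j,\,X_{k-1}B^aU_j\bigr) + m\bigl(X_{k-1}AB^aU_j,\,X_{k-1}B^aU_j\bigr)$, carrying no $2^{-2j}$ weight, and then $\Phi^k_m = \Tilde{\Phi}^k_A + \Tilde{\Phi}^k_{B^a}$ exactly as in the preceding lemma.

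\textbf{Step 1 ($A$-branch).} Combining the differential identity \eqref{eq:left} for $\Phi^k_A$ with the Case~2 counterpart of \eqref{eq:fullmix} for the correction term, I would cancel the leading $\partial_x$-carrying cross term using Assumption~\ref{ass5} (which fixes $m$), reduce $X_{k-1}(B^a)^2U_j$ and $X_{k-1}B^aAU_j$ to nodes of level $\le k-1$ via Assumption~\ref{ass4}, and reduce the level-$(k+2)$ node $X_{k-1}A(B^a)^2U_j$ to $X_{k-1}AU_j$ plus lower-level nodes via Assumption~\ref{ass6} (here the coefficient of $X_{k-1}B^aA$ vanishes, which is precisely why that node never appears). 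After one integration by parts on the two remaining second-order terms and repeated use of Young's inequality, this yields the analogue of \eqref{phiA},
\[
\partial_t\Tilde{\Phi}^k_A + \tfrac{\eps^{p_k}}{2}\|X_{k-1}AU_j\|_{L^2}^2 \le \eps^{p_k}2^{-2j}\|X_{k-1}B^aU_j\|_{L^2}^2 + C\eps^{p_k-q_k}\sum_{i=0}^{k-1}\|X_iU_j\|_{L^2}^2 + \eps^{p_k+q_k}\|X_{k-1}A^2U_j\|_{L^2}^2 + \eps^{p_k+q_k}2^{-2j}\|X_{k-1}AB^aU_j\|_{L^2}^2.
\]

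\textbf{Step 2 ($B^a$-branch).} Differentiating $\Phi$ and using the counterpart of \eqref{eq:fullmix}, the crucial point — carried over verbatim from the Case~1 cancellation lemma — is that, with $m$ fixed by \eqref{cancellation12}, the extra hypothesis \eqref{cancellation22} annihilates the one residual $\partial_x$-carrying term $\bigl(X_{k-1}U_j - mX_{k-1}AB^aU_j,\,X_{k-1}B^aA\partial_xU_j\bigr)$, so that this contribution closes with \emph{no} $2^{-2j}$ factor. Then the terms containing $B^sU_j$ are absorbed exactly as in Lemma~\ref{lemma:right}, the term $\bigl(X_{k-1}U_j,\,X_{k-1}(B^a)^2U_j\bigr)$ is handled through \eqref{cond} (with $\delta_k=0$), and the residual cross terms $m\bigl(X_{k-1}AB^aU_j,\,X_{k-1}B^aAU_j\bigr)$ and $m\bigl(X_{k-1}AB^aU_j,\,X_{k-1}A(B^a)^2U_j\bigr)$ are dispatched by Young's inequality together with Assumptions~\ref{ass4} and \ref{ass6}. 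One obtains
\[
\partial_t\Tilde{\Phi}^k_{B^a} + \tfrac{\eps^{p_{k+1}}}{2}\|X_{k-1}B^aU_j\|_{L^2}^2 \le C\eps^{p_{k+1}-q_{k+1}}\sum_{i=0}^{k-1}\|X_iU_j\|_{L^2}^2 + \eps^{p_{k+1}}\|X_{k-1}AU_j\|_{L^2}^2 + \eps^{p_{k+1}+q_{k+1}}\|X_{k-1}A^2U_j\|_{L^2}^2.
\]
Adding the two displays, I would absorb $\eps^{p_k}2^{-2j}\|X_{k-1}B^aU_j\|_{L^2}^2$ into $\tfrac{\eps^{p_{k+1}}}{2}\|X_{k-1}B^aU_j\|_{L^2}^2$ and $\eps^{p_k+q_k}2^{-2j}\|X_{k-1}AB^aU_j\|_{L^2}^2$ into the admissible right-hand term $\eps^{p_k+q_k}\|X_{k-1}AB^aU_j\|_{L^2}^2$ (both legitimate once $j\ge j_0$; the finitely many frequencies $0\le j<j_0$ being handled, as in the remark after Lemma~\ref{lemma:magic}, by the exponential-decay bound of Section~\ref{sec:3}), and absorb $\eps^{p_{k+1}}\|X_{k-1}AU_j\|_{L^2}^2$ into $\tfrac{\eps^{p_k}}{2}\|X_{k-1}AU_j\|_{L^2}^2$ (valid for $\eps$ small, as $p_{k+1}>p_k$). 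The routine matching of the remaining powers of $\eps$, using the admissibility of $\{p_k,q_k\}$, leaves exactly the right-hand side of the statement.

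The step I expect to be the main obstacle is the bookkeeping of $\partial_t\Phi$ in Step~2: one must verify that, after invoking \eqref{cancellation12} \emph{and} \eqref{cancellation22}, \emph{no} uncontrolled $\partial_x$-term survives — this is what licenses dropping the $2^{-2j}$ weight in $\Tilde{\Phi}^k_{B^a}$, hence the sharper recovery of $\|X_{k-1}B^aU_j\|_{L^2}^2$ (with $\d_k=0$) compared with Lemma~\ref{lemma:magic2} — and that every high-order node appearing along the way ($X_{k-1}B^aA$, $X_{k-1}(B^a)^2$, $X_{k-1}A(B^a)^2$, as well as $X_{k-1}A^2$ and $X_{k-1}AB^a$) is either reduced, via Assumptions~\ref{ass4} and \ref{ass6}, to nodes of level $\le k-1$ together with $X_{k-1}AU_j$, or is one of the two terms allowed on the right. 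Everything else is the same Young-inequality accounting already performed in Lemma~\ref{lemma:magic2} and in the Case~1 cancellation lemma.
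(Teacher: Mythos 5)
Your proposal is correct and matches the paper's intent exactly: the paper omits this proof entirely, stating only that it ``works the same way'' as the Case~1 cancellation lemma, and your adaptation — keeping $\Tilde{\Phi}^k_A$ from the Case~2 definition, replacing $\Tilde{\Phi}^k_{B^a}$ by the unweighted $\eps^{p_{k+1}}\Phi$, using Assumption~\ref{ass5} to fix $m$, Assumptions~\ref{ass4} and \ref{ass6} to reduce the high-level nodes, and \eqref{cancellation22} to eliminate the residual $\partial_x$-carrying term — is precisely the intended argument. Your explicit treatment of the $j\ge j_0$ absorption step is in fact slightly more careful than what the paper records for the analogous Case~1 statement.
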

In this case, we do not have an improvement of the discrepancy, but we still obtain better regularity since we now recover the control of the norm $\|X_{k -1}B^aU_j\|_{L^2}^2$.
\begin{remark}
    Note that the coefficients are now unbalanced, being $\eps^{p_k }$ and $\eps^{p_{k +1}}$. This was necessary in the proof to be able to absorb the term $\eps^{p_{k +1}}\|X_{k -1}AU_j\|_{L^2}^2$ of the last inequality with the norm recovered at \eqref{phiA}. Clearly, if there is a cancellation and $X_{k -1}B^a$ has a child node, we will now need to multiply the associated functional by $\eps^{p_{k +2}}$ instead of $\eps^{p_{k +1}}$. This, however, has no effect on the previous results and discussion.
\end{remark}

\section{Low-frequency Analysis}\label{sec:ImproveLF}

\noindent
We now investigate the behavior of the functionals introduced in the previous section in the low-frequency regime. In this regime, the roles of $A$ and $B^a$ are somewhat inverted. In particular, when we add a left node to the tree, we generally lose regularity. However, there is no true symmetry with the high-frequency setting: while in high frequencies the "good" functional $\Phi^{k }_A$ needs to be multiplied by $2^{-2j}$, in low frequencies the good functional $\Phi^{k }_{B^a}$ need not be. This creates an imbalance in the differential inequalities, particularly when dealing with the mixed term, leading to technical barriers that we were unable to overcome (at least, not in a meaningful way, see Remark \ref{rem:leftright}). Accordingly, we will work under the following assumption.
\begin{assumption}
\label{assump:low}
    The "Left AND Right" case never happens at low frequencies.
\end{assumption}

In this section, we always use the low-frequency variable $U^{\ell}=\mathcal{F}^{-1}(\mathbf{1}_{|\xi|\leq 1}\widehat{U})$ and simplify the notation by omitting the superscript $^\ell$. It is important to note that we are still able to treat the case in which mixed terms appear using the functional derived from the Kalman condition in Section \ref{sec:2} (see, e.g., Example \ref{ex:3by3}).
We now proceed with the analysis of the four remaining cases.

\subsection*{Case (1): Left} We will associate the term
\[
\Phi^{k }_A := \eps^{p_k }\bigl(X_{k -1} U_j,\, X_{k -1} A\partial_x U_j \bigl),
\]
and add to the node a discrepancy \[\d_k  = 1.\]
The following lemma holds.
\begin{lemma}
\label{lemma:left_low}
    It holds
    \begin{align}
    \begin{split}
        \partial_t \Phi^{k }_A& + \frac{\eps^{p_k }}{2}2^{2j}\| X_{k -1} A U_j\|_{L^2}^2 \\
        &\leq C \eps^{p_k  - q_k } \|X_{k -1} U_j\|_{L^2}^2 + C\eps^{p_k }\sum_{i = 0}^{k -2} \|X_i U_j\|_{L^2}^2 \\ 
        &\quad+ \eps^{p_k  + q_k } 2^{4j} \|X_{k -1}A^2 U_j\|_{L^2}^2 + \eps^{p_k  + q_k } 2^{2j} \|X_{k -1}AB^aU_j\|_{L^2}^2.
    \end{split}
    \end{align}
\end{lemma}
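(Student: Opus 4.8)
The plan is to mirror the proof of Lemma~\ref{lemma:left} essentially line by line, adjusting only the placement of the frequency weights to the low-frequency regime. First I would differentiate $\Phi^{k}_A$ in time and use $\partial_t U_j = -(A\partial_x+B^a+B^s)U_j$ to derive the differential identity
\begin{align*}
\partial_t\bigl(X_{k-1}U_j,\,X_{k-1}A\partial_x U_j\bigr) + \|X_{k-1}A\partial_x U_j\|_{L^2}^2
&= -\bigl(X_{k-1}B^s U_j,\,X_{k-1}A\partial_x U_j\bigr) \\
&\quad -\bigl(X_{k-1}U_j,\,X_{k-1}AB^s\partial_x U_j\bigr) \\
&\quad -\bigl(X_{k-1}B^a U_j,\,X_{k-1}A\partial_x U_j\bigr) \\
&\quad -\bigl(X_{k-1}U_j,\,X_{k-1}A^2\partial_{xx}U_j\bigr) \\
&\quad -\bigl(X_{k-1}U_j,\,X_{k-1}AB^a\partial_x U_j\bigr),
\end{align*}
i.e.\ the exact analogue of \eqref{eq:left} but without the prefactor $2^{-2j}$. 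Since $\widehat{U_j}$ is supported in an annulus of size $\sim 2^j$ and, in the low-frequency regime, the localisation forces $j\le 0$ and hence $2^{j}\le 1$, the dissipative term on the left controls $\|X_{k-1}A\partial_x U_j\|_{L^2}^2\ge \tfrac{9}{16}2^{2j}\|X_{k-1}A U_j\|_{L^2}^2$, which is the source of the term $\tfrac{\eps^{p_k}}{2}2^{2j}\|X_{k-1}A U_j\|_{L^2}^2$ in the statement.

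Next I would estimate the five terms on the right. The two terms involving $B^s$ are harmless: by the Bernstein property of Lemma~\ref{lem:Bernstein} and the boundedness of $A,B^s,X_{k-1}$, each is bounded, after a Young inequality, by $C\|B^s U_j\|_{L^2}^2 + C\|X_{k-1}U_j\|_{L^2}^2 + \tfrac{1}{32}2^{2j}\|X_{k-1}A U_j\|_{L^2}^2$, where I use $2^{j}\le 1$ to drop the extra powers of $2^{j}$; the contribution $\|B^s U_j\|_{L^2}^2=\|X_0 U_j\|_{L^2}^2$ is absorbed into $\sum_{i=0}^{k-2}\|X_i U_j\|_{L^2}^2$ (the case $k=1$, with $X_0=B^s$, being handled directly). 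For the third term I would invoke the defining rank condition \eqref{eq:rankleft} of the ``Left'' case, $\rank(M_{B^a})=r$, which puts $X_{k-1}B^a$ in the row span of $B^s,\ldots,X_{k-1}$, so $\|X_{k-1}B^a U_j\|_{L^2}^2\lesssim\sum_{i=0}^{k-1}\|X_i U_j\|_{L^2}^2$, and a Young inequality yields $\tfrac{1}{32}2^{2j}\|X_{k-1}A U_j\|_{L^2}^2 + C\sum_{i=0}^{k-1}\|X_i U_j\|_{L^2}^2$. Finally, for the fourth term I would integrate the $\partial_{xx}$ by parts, $\bigl(X_{k-1}U_j,X_{k-1}A^2\partial_{xx}U_j\bigr)=-\bigl(X_{k-1}\partial_x U_j,X_{k-1}A^2\partial_x U_j\bigr)$, and use Young with weight $\eps^{q_k}$ to bound it by $\tfrac{C}{\eps^{q_k}}\|X_{k-1}U_j\|_{L^2}^2 + C\eps^{q_k}2^{4j}\|X_{k-1}A^2 U_j\|_{L^2}^2$, and likewise the fifth term by $\tfrac{C}{\eps^{q_k}}\|X_{k-1}U_j\|_{L^2}^2 + C\eps^{q_k}2^{2j}\|X_{k-1}AB^a U_j\|_{L^2}^2$. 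Collecting all estimates and multiplying by $\eps^{p_k}$ — using $\eps^{p_k}\le\eps^{p_k-q_k}$ to move the $i=k-1$ contributions into the $\eps^{p_k-q_k}\|X_{k-1}U_j\|_{L^2}^2$ term — produces the claimed inequality.

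\textbf{Main obstacle.} I do not expect a genuine obstacle: this lemma is a routine transcription of Lemma~\ref{lemma:left}. The one point that deserves care, and where the low-frequency setting differs essentially from the high-frequency one, is the bookkeeping of the weights on the ``bad'' terms $\|X_{k-1}A^2 U_j\|_{L^2}^2$ and $\|X_{k-1}AB^a U_j\|_{L^2}^2$: in high frequencies the prefactor $2^{-2j}$ of $\Phi^{k}_A$ made these absorbable for free, whereas here they reappear with the positive weights $2^{4j}$ and $2^{2j}$. This is precisely why the discrepancy is set to $\d_k=1$, encoding the generic loss of one derivative when one appends a left node in low frequencies, and one should keep track that these two residual terms are exactly what the algorithm will control at the next level through the children $X_{k-1}A^2$ and $X_{k-1}AB^a$ of the node $X_{k-1}A$.
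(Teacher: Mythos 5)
Your proof is correct and follows essentially the same route as the paper's: the paper writes the identical differential identity \eqref{eq:left_low}, uses the annulus support of $\widehat{U}_j$ to get $\|X_{k-1}A\partial_x U_j\|_{L^2}^2\ge\tfrac{9}{16}2^{2j}\|X_{k-1}AU_j\|_{L^2}^2$, controls the first three terms exactly as in Lemma~\ref{lemma:left} (Bernstein, the rank condition with Cayley--Hamilton for the $B^a$ term, Young), and bounds the last two terms with weights $\eps^{q_k}2^{4j}$ and $\eps^{q_k}2^{2j}$ before multiplying by $\eps^{p_k}$. Your extra bookkeeping remarks (absorbing $\|B^sU_j\|^2$ for $k=1$, using $2^j\le1$ in low frequencies) are consistent with, and slightly more explicit than, the paper's exposition.
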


\begin{proof}
The counterpart to \eqref{eq:left} reads
    \begin{align}
    \label{eq:left_low}
    \begin{split}
        \partial_t \bigl(X_{k -1} U_j,\, X_{k -1} A\partial_x U_j \bigl) + \| X_{k -1} A \partial_x U_j\|_{L^2}^2 =  
        &- \bigl(X_{k -1} B^s U_j, \, X_{k -1} A \partial_x U_j\bigl) \\
        &-\bigl(X_{k -1} U_j, \, X_{k -1} A B^s\partial_x U_j\bigl) \\
        &-\bigl(X_{k -1} B^a U_j, \, X_{k -1} A\partial_x U_j\bigl) \\
        &-\bigl(X_{k -1} U_j, \, X_{k -1} A^2 \partial_{xx} U_j\bigl) \\
        &-\bigl(X_{k -1} U_j, \, X_{k -1} A B^a\partial_x U_j\bigl).
    \end{split}
\end{align}
Here, the support of $\widehat{U}_j$ leads to 
$$
\| X_{k -1} A \partial_x U_j\|_{L^2}^2\geq \frac{9}{16}2^{2j}\|X_{k -1} AU_j\|_{L^2}^2.
$$
The first three terms are controlled exactly as in Lemma \ref{lemma:left}. 
Moving on to the fourth term, we have
\[
|\bigl(X_{k -1} U_j, \, X_{k -1} A^2 \partial_{xx} U_j\bigl)| \leq \frac{C}{\eps^{q_k }} \|X_{k -1} U_j\|_{L^2}^2+ \eps^{q_k } 2^{4j} \|X_{k -1}A^2U_j\|_{L^2}^2.
\]
In a similar fashion, we control the last term in the following way
\[
| \bigl(X_{k -1} U_j, \, X_{k -1} A B^a\partial_x U_j\bigl) | \leq \frac{C}{\eps^{q_k }} \|X_{k -1} U_j\|_{L^2}^2+ \eps^{q_k } 2^{2j}\|X_{k -1}AB^a U_j\|_{L^2}^2.
\]
The conclusion follows.
\end{proof}

\subsection*{Case (2): Right}
We consider the functional
\[
\Phi^{k }_{B^a} := \eps^{p_k }\bigl(X_{k -1} U_j,\, X_{k -1} B^a U_j \bigl).
\]
Moreover, we add a discrepancy $\d_k  = 0$ to the node.
\begin{lemma}
\label{lemma:right_low}
    It holds
    \begin{align}
    \label{rightpath}
    \begin{split}
        \partial_t \Phi^{k }_{B^a}
        +&\frac{3\eps^{p_k }}{4}\|X_{k -1} B^a U_j\|_{L^2}^2\\
        &\leq C \eps^{p_k  -q_k }\|X_{k -1} U_j\|_{L^2}^2 + C\eps^{p_k } \sum_{j = 0}^{k -2} \|X_j U_j\|_{L^2}^2 \\ 
        &\quad+ \eps^{p_k  + q_k } 2^{2j}\|X_{k -1}B^aA U_j\|_{L^2}^2 + \eps^{p_k  + q_k }\|X_{k -1}(B^a)^2U_j\|_{L^2}^2.
    \end{split}
    \end{align}    
\end{lemma}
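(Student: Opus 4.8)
The plan is to follow the scheme of Lemma~\ref{lemma:right}, adapted to the low-frequency regime, where two features change: on the support of $\widehat{U}_j$ one has $2^{2j}\leq C$, and there is no weight $2^{-2j}$ multiplying the cross term $\Phi^k_{B^a}$. First I would compute the differential identity obtained by differentiating $\bigl(X_{k-1}U_j,\, X_{k-1}B^aU_j\bigr)$ in time and inserting $\partial_tU_j = -A\partial_xU_j - B^aU_j - B^sU_j$; this is the exact analogue of \eqref{eq:right} but \emph{without} the factor $2^{-2j}$, so that $\partial_t\bigl(X_{k-1}U_j,\, X_{k-1}B^aU_j\bigr) + \|X_{k-1}B^aU_j\|_{L^2}^2$ equals minus the sum of the five terms $\bigl(X_{k-1}B^sU_j,\, X_{k-1}B^aU_j\bigr)$, $\bigl(X_{k-1}U_j,\, X_{k-1}B^aB^sU_j\bigr)$, $\bigl(X_{k-1}A\partial_xU_j,\, X_{k-1}B^aU_j\bigr)$, $\bigl(X_{k-1}U_j,\, X_{k-1}B^aA\partial_xU_j\bigr)$ and $\bigl(X_{k-1}U_j,\, X_{k-1}(B^a)^2U_j\bigr)$. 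Since $B^a$ has order zero, the dissipative term already carries an $O(1)$ coefficient and needs no frequency weight, which is exactly why the discrepancy attached to this node is $\d_k=0$.

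Next I would estimate the five terms. The first two contain the factor $B^sU_j=X_0U_j$; Cauchy--Schwarz and Young bound them by $\tfrac18\|X_{k-1}B^aU_j\|_{L^2}^2 + C\|X_0U_j\|_{L^2}^2 + C\|X_{k-1}U_j\|_{L^2}^2$, where $\|X_0U_j\|_{L^2}^2$ belongs to the sum $\sum_{i=0}^{k-2}$ for $k\geq2$ and equals $\|X_{k-1}U_j\|_{L^2}^2$ for $k=1$. The third term is the one using the defining feature of Case~(2): since $\rank(M_A)=r$, the block $X_{k-1}A$ lies in the span of $X_0,\dots,X_{k-1}$, so $X_{k-1}AU_j$ is a linear combination of $X_0U_j,\dots,X_{k-1}U_j$; invoking the Bernstein relation $\|\partial_xU_j\|_{L^2}\sim 2^j\|U_j\|_{L^2}$ and then $2^{2j}\leq C$, this term is controlled by $\tfrac18\|X_{k-1}B^aU_j\|_{L^2}^2 + C\sum_{i=0}^{k-1}\|X_iU_j\|_{L^2}^2$. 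The fourth and fifth terms are split by Young's inequality with weight $\eps^{q_k}$, producing $C\eps^{-q_k}\|X_{k-1}U_j\|_{L^2}^2$ together with $\eps^{q_k}\|X_{k-1}B^aA\partial_xU_j\|_{L^2}^2\lesssim \eps^{q_k}2^{2j}\|X_{k-1}B^aAU_j\|_{L^2}^2$ and $\eps^{q_k}\|X_{k-1}(B^a)^2U_j\|_{L^2}^2$.

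To conclude I would multiply the inequality by $\eps^{p_k}$ and collect. The two $\tfrac18\|X_{k-1}B^aU_j\|_{L^2}^2$ pieces (one from the first two terms, one from the third) are absorbed into the left-hand side, leaving $\tfrac34\eps^{p_k}\|X_{k-1}B^aU_j\|_{L^2}^2$. The $i=k-1$ contribution of the third-term bound, namely $C\eps^{p_k}\|X_{k-1}U_j\|_{L^2}^2$, is reabsorbed into $C\eps^{p_k-q_k}\|X_{k-1}U_j\|_{L^2}^2$ since $\eps^{p_k}\leq\eps^{p_k-q_k}$ for $\eps<1$; the contributions with $i\leq k-2$ assemble into $C\eps^{p_k}\sum_{i=0}^{k-2}\|X_iU_j\|_{L^2}^2$; and the last two remainders are precisely $\eps^{p_k+q_k}2^{2j}\|X_{k-1}B^aAU_j\|_{L^2}^2$ and $\eps^{p_k+q_k}\|X_{k-1}(B^a)^2U_j\|_{L^2}^2$. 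This is \eqref{rightpath}.

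I do not anticipate a real obstacle here: each term succumbs to Cauchy--Schwarz, Young, the Bernstein property and the rank identity of Case~(2), exactly as in Lemma~\ref{lemma:right}. The only steps requiring care are the bookkeeping of which remainder lands in which term of the statement — in particular the degenerate $k=1$ case for the $B^sU_j$ contributions — and the conceptual point that here it is the low-frequency smallness $2^{2j}\leq C$, rather than an extra weight $2^{-2j}$ as in high frequencies, that allows the ``Right'' move to be carried out with no loss of derivatives, i.e. with discrepancy $\d_k=0$.
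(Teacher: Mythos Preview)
Your proposal is correct and follows essentially the same approach as the paper: the paper writes down the analogue of \eqref{eq:right} without the $2^{-2j}$ weight and then simply states that the bounds follow by Young's inequality as in Lemma~\ref{lemma:right}. Your write-up is in fact more detailed than the paper's own proof, spelling out how the rank condition $\rank(M_A)=r$ handles the third term and how the $\eps$-bookkeeping closes, but the underlying argument is identical.
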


\begin{proof}
    The counterpart of \eqref{eq:right} reads
    \begin{align}
    \label{eq:right_low}
    \begin{split}
        \partial_t \bigl(X_{k -1} U_j,\, X_{k -1} B^a U_j \bigl) + \|X_{k -1} B^a U_j\|_{L^2}^2 =  
        &-\bigl(X_{k -1} B^s U_j, \, X_{k -1} B^a U_j\bigl) \\
        &-\bigl(X_{k -1} U_j, \, X_{k -1} B^a B^s U_j\bigl) \\
        &-\bigl(X_{k -1} A \partial_x U_j, \, X_{k -1} B^a U_j\bigl) \\
        &-\bigl(X_{k -1} U_j, \, X_{k -1} B^a A \partial_x U_j\bigl) \\
        &-\bigl(X_{k -1} U_j, \, X_{k -1} (B^a)^2 U_j\bigl).
    \end{split}
    \end{align}
    The proof is a direct application of Young's inequality in the same vein as Lemma \ref{lemma:right}.
\end{proof}

\subsection*{Case (4): Left OR Right} In this case, we simply do the opposite as in the high-frequency regime. Specifically, if both $A$ and $B^a$ recover the same dissipation, we add $X_{k -1}B^a$ to the path.

\subsection*{Case (5): End of the Path} The procedure is the same as described in Section \ref{sec:ImproveHF}.

\begin{remark}[Mixed case and low frequencies]
\label{rem:leftright}
    For the sake of completeness, we mention that in the mixed case "Left AND Right" it is still possible to prove a result similar to Lemma \ref{lemma:magic}. Specifically, if Assumption \ref{ass1} holds, and the following two assumptions
    \begin{enumerate}
        \myitem{(2')} \label{ass2prime} There exists $\alpha_i$, $i = 0, \ldots, k -1$, such that 
        \[
        X_{k -1}A^2U_j = \sum_{i=0}^{k -1} \alpha_i X_iU_j
        \]
        and
        \[
        X_{k -1}AB^a = 0.
        \]
        \myitem{(3')} \label{ass3prime} Similarly, we assume
        \[
        X_{k -1}B^aAB^a = 0.
        \]        
    \end{enumerate}
    hold in place of \ref{ass2}-\ref{ass3}, then one can define the functional 
    \[
\Phi^{k }_m := \Tilde{\Phi}^{k }_{A} + \Tilde{\Phi}^{k }_{B^a},
\]
where
    \[
\Tilde{\Phi}^{k }_A := 2^{2j}\Phi^{k }_A + m2^{2j}\bigl( X_{k -1} A U,\,  X_{k -1} B^a A U \bigl)
\]
and
\[
\Tilde{\Phi}^{k }_{B^a} := 2^{2j}\Phi^{k }_{B^a} + m2^{2j}\bigl( X_{k -1} A U,\,  X_{k -1} B^a A U \bigl).
\]
Then, it is possible to show that $\Phi^k _m$ satisfies
\begin{align*}
\partial_t \Phi_k ^m &+\frac{\eps^{p_k }}{2}2^{4j}\|X_{k -1}AU\|_{L^2}^2 + \frac{3\eps^{p_k }}{4}2^{2j}\|X_{k -1}B^aU\|_{L^2}^2 \\ &\leq C\eps^{p_k  - q_k }\|B^sU\|_{L^2}^2 + C \eps^{p_k  - q_k }2^{2j}\sum_{i=0}^{k -1}\|X_i U\|_{L^2}^2+ \eps^{p_k +q_k }2^{4j}\|X_{k -1}B^aAU\|_{L^2}^2.
\end{align*}
This inequality, however, matches poorly with those obtained in Lemma \ref{lemma:left_low} and \ref{lemma:right_low}, due to the weights $2^{4j}$ and $2^{2j}$ in front of the recovered norms. Hence, while for some ad hoc systems we might still be able to cover the mixed case, we have preferred to work within Assumption \ref{assump:low} to make the exposition clearer. 
\end{remark}

\section{The Improved Functional} 
\label{sec:7}

\noindent
We can now collect all the information obtained in the previous two sections to define the Lyapunov functional. Let 
\[
\mathsf{N} = \big\{ X_{k }^i, \, k  = 1, \ldots, K, \, i = 1, \ldots, M_k \big\}
\] 
be the set of nodes chosen with the algorithm. 

\subsection{High frequencies}
With a slight abuse of notation, let us indicate by $\d_{k -1}^i$ the discrepancy of the father node of $X_k ^i$. We define recursively the numbers
\[
\alpha^i_k  = 
\begin{cases}
    \alpha_0 = 0, \\
    \alpha^i_k  = \alpha^i_{k -1} &\text{if } \d^i_{k -1} = 0, \\
    \alpha^i_k  = \alpha^i_{k -1} + 1 &\text{if } \d^i_{k -1} = 1.
\end{cases}
\]
Conventionally, we set $\d_0 = 0$.
At this point, for every fixed $j \geq 0$,
to every node $X_k ^i \in \mathsf{N}$ we associate the functional 
\[
\Psi_k ^i =
2^{-2\alpha_k ^ij}\times
\begin{dcases}
  \Phi^k _A &\text{if left path at } k -1,\\
  \Phi^k _{B^a} &\text{if right path at } k -1, \\
  \Tilde{\Phi}^k _A  &\text{if mixed path at } k -1 \text{ and } X^i_k  = X_{k -1}A, \\
  \Tilde{\Phi}^k _{B^a}  &\text{if mixed path at } k -1 \text{ and } X^i_k  = X_{k -1}B^a.
\end{dcases}
\]
Then, the final Lyapunov functional reads
\[
\L^h_j = \frac12 \|U^h_j\|_{L^2}^2 + \sum_{k =1}^K \sum_{i=1}^{M_k } \Psi_k ^i.
\]
To state our main result, we need one last definition. Let 
\[
(I, L) = \underset{i, k }{\mathrm{argmax}}\, \alpha^i_k .
\]
The variables associated with these indexes are the ones for which we have the largest regularity loss. Accordingly, these nodes dictate the final decay rate.
We define
\[
\widetilde{\alpha}:=
\begin{dcases}
    \alpha^I_L &\text{if we recover } \|X_L^IU^h_j\|_{L^2}^2, \\
    \alpha^I_L + 1 &\text{if we recover } 2^{-2j}\|X^I_LU^h_j\|_{L^2}^2.
\end{dcases}
\]
We then have the following result.
\begin{theorem}[High-frequency decay]
\label{th:improvedh} 
For any $\gamma>0$, if $\widetilde{\alpha}\geq1$, then
\begin{equation}
\label{eq:decayhfimp}
    \|U^h\|_{L^2} \leq C(1+t)^{-\gamma}\|U_0^h\|_{H^{\gamma\widetilde{\alpha}}}.
\end{equation}
If $\widetilde{\alpha}=0$, the high-frequency part of the solution decays exponentially.
\end{theorem}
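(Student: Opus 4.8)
The plan is to imitate the proof of Proposition~\ref{prophf}: first show that the functional $\L^h_j$ assembled by the algorithm is equivalent to the energy $\|U^h_j\|_{L^2}^2$, then derive for it a closed Lyapunov inequality of the form $\partial_t\L^h_j+c\,2^{-2\widetilde\alpha j}\L^h_j\le 0$, and finally conclude by Gr\"onwall and the summation over the Littlewood-Paley blocks already performed in Section~\ref{sec:3}. Throughout one works with $j\ge 0$ (recall $U^h_j=0$ for $j\le -1$) and a small parameter $\eps\in(0,1)$ fixed at the end. The equivalence is immediate: each building block $\Psi^i_k$ equals $2^{-2\alpha^i_kj}\eps^{p_k}$ times a scalar product of two words in $B^s,A,B^a$ applied to $U^h_j$, with an extra factor $2^{-2j}$ whenever a derivative occurs; since $p_k\ge p_1=1$, since $2^{-2\alpha^i_kj}\le 1$ for $j\ge 0$, and since the Bernstein inequalities of Lemma~\ref{lem:Bernstein} turn each hidden $\partial_x$ into a factor $2^{j}$ that the $2^{-2j}$ weight absorbs, one gets $|\Psi^i_k|\le C\eps\,\|U^h_j\|_{L^2}^2$; summing over the finitely many nodes of $\mathsf N$ and taking $\eps$ small gives $\tfrac14\|U^h_j\|_{L^2}^2\le\L^h_j\le\|U^h_j\|_{L^2}^2$.

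The heart of the argument is assembling the Lyapunov inequality. One adds the energy identity $\partial_t(\tfrac12\|U^h_j\|_{L^2}^2)+\kappa\|B^sU^h_j\|_{L^2}^2\le 0$ to the differential inequalities of Lemmas~\ref{lemma:left}, \ref{lemma:right}, \ref{lemma:magic}, \ref{lemma:magic2} (and, on the nodes where the algorithm records a cancellation, the corresponding cancellation lemmas), each of the latter multiplied by the frequency weight $2^{-2\alpha^i_kj}$ attached to its node. This produces, for every $X^i_k\in\mathsf N$, a \emph{good} dissipative term $\eps^{p_k}2^{-2\mu^i_kj}\|X^i_kU^h_j\|_{L^2}^2$, with $\mu^i_k$ equal to $\alpha^i_k$ or $\alpha^i_k+1$ according to whether or not the recovered norm carries the extra $2^{-2j}$, and one has to absorb against it three kinds of error terms: (a) lower-level norms $\|X_\ell U^h_j\|_{L^2}^2$, $\ell\le k-1$, with coefficient $\le C\eps^{p_k-q_k}$; (b) the two ``child-level'' norms $\|X_{k-1}A^2U^h_j\|_{L^2}^2$, $2^{-2j}\|X_{k-1}AB^aU^h_j\|_{L^2}^2$ (or the $B^a$-analogues) with coefficient $\eps^{p_k+q_k}$; (c) multiples of $\|B^sU^h_j\|_{L^2}^2$. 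When a child-level operator in (b) has not been chosen by the algorithm, the Cayley--Hamilton theorem expresses it as a combination of strictly earlier nodes, so it falls under (a). The admissibility relations $q_k<p_k-p_{k-1}<q_{k-1}$ are precisely $p_k-q_k>p_{k-1}$ and $p_k+q_k>p_{k+1}$, which makes every error term of type (a)--(b), once $\eps$ is small, dominated by the good dissipative term recovered at the appropriate level; the matching of the powers of $2^{-j}$ is automatic for $j\ge 0$ since the bookkeeping in the $\alpha^i_k$'s guarantees that a child node carries no fewer factors $2^{-j}$ than the parent's error term it must absorb, and the type~(c) terms are swallowed by $\kappa\|B^sU^h_j\|_{L^2}^2$. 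One is thus led to
\[
\partial_t\L^h_j+\eta\Big(\|B^sU^h_j\|_{L^2}^2+\sum_{X^i_k\in\mathsf N}\eps^{p_k}2^{-2\mu^i_kj}\|X^i_kU^h_j\|_{L^2}^2\Big)\le 0
\]
for $j\ge j_0$, with $j_0$ the threshold of Lemma~\ref{lemma:magic}; the finitely many blocks $0\le j<j_0$ are handled directly by \eqref{LLY}, which yields exponential decay there since $2^j$ is bounded.

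Next one extracts the rate. Under Assumption~\ref{ass:kalman} the algorithm stops only when the collected matrix $[B^s;X^1_1;\ldots;X^{M_K}_K]^\top$ has rank $n$ — otherwise some further word in $A,B^a$ would still raise the rank, contradicting \eqref{condition:span} — so $\{B^s\}\cup\mathsf N$ is injective on $\R^n$ and $\|B^sU^h_j\|_{L^2}^2+\sum_{X^i_k\in\mathsf N}\|X^i_kU^h_j\|_{L^2}^2\ge c\|U^h_j\|_{L^2}^2$. Bounding every weight $2^{-2\mu^i_kj}$ in the dissipation from below by $2^{-2\widetilde\alpha j}$ — which is exactly the role of $\widetilde\alpha$, the largest total loss exponent $\mu^i_k$ over the chosen nodes, as defined before the statement — and using $\L^h_j\sim\|U^h_j\|_{L^2}^2$, one obtains $\partial_t\L^h_j+c\,2^{-2\widetilde\alpha j}\L^h_j\le 0$ for all $j\ge 0$. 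Gr\"onwall gives $\|U^h_j(t)\|_{L^2}^2\lesssim e^{-c2^{-2\widetilde\alpha j}t}\|U^h_{0,j}\|_{L^2}^2$; when $\widetilde\alpha\ge 1$, summing over $j\ge 0$ with the change of index $2^{-2\widetilde\alpha j'}\sim 2^{-2\widetilde\alpha j}t$ and $\sup_{x>0}x^{\gamma}e^{-cx}<\infty$, exactly as in the proof of Proposition~\ref{prophf}, yields \eqref{eq:decayhfimp}; when $\widetilde\alpha=0$ the inequality reads $\partial_t\L^h_j+c\L^h_j\le 0$ uniformly in $j$, whence $\|U^h(t)\|_{L^2}^2\lesssim e^{-ct}\|U^h_0\|_{L^2}^2$.

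The main obstacle is the cascade of the second paragraph: one must check that a single pair of admissibility inequalities simultaneously controls the telescoping of the $\eps$-powers along \emph{every} branch of the binary tree, and that the node-by-node frequency weights $2^{-2\alpha^i_kj}$ fit together — a child node never carries fewer factors $2^{-j}$ than the error term it must absorb from its parent — so that no derivative is spuriously lost in an absorption. Granting this, each individual bound is a one-line Young inequality and the final summation over $j$ is identical to that of Proposition~\ref{prophf}.
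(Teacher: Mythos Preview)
Your proposal is correct and follows essentially the same approach as the paper: collect the differential inequalities of Lemmas~\ref{lemma:left}--\ref{lemma:magic2} (and the cancellation lemmas) weighted by $2^{-2\alpha^i_kj}$, use admissibility of $(p_k,q_k)$ to absorb the cascade of error terms, handle $0\le j<j_0$ via \eqref{LLY}, and conclude by the Gr\"onwall/summation argument of Proposition~\ref{prophf}. Your treatment is in fact more explicit than the paper's own proof, which simply states that ``collecting all the inequalities'' and using admissibility yields the intermediate bound $\partial_t\L^h_j+\eps^{p_K}\sum 2^{-2\gamma^i_kj}\|X^i_kU^h_j\|^2\le 0$ without spelling out the three error types or the rank-$n$ injectivity step you highlight.
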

\begin{proof}
    The proof simply consists
    in applying Lemmas \ref{lemma:left}, \ref{lemma:right} or \ref{lemma:magic} depending on the path defined by the Lyapunov functional. Recall that the integer $j_0$ is given by Lemma \ref{lemma:magic}. Due to the inhomogeneous Kalman rank condition, we have the inequality \eqref{LLY}, which leads to the exponential decay of $\|U_j^h\|_{L^2}^2\lesssim e^{-ct}\|U_{0,j}^h\|_{L^2}^2$ for all $j\leq j_0-1$.  Collecting all the inequalities in Lemmas \ref{lemma:left}, \ref{lemma:right} or \ref{lemma:magic}, and using that $(p_k,q_k)$ is an admissible sequence, for $j\geq j_0$, we arrive at
    \[
    \frac{d}{dt}\L^h_j + \eps^{p_K}\sum_{k =1}^K \sum_{i=1}^{M_k } 2^{-2\gamma_k ^ij} \| X_k ^i U_j^h\|_{L^2}^2 \leq 0,
    \]
    where $\gamma_k ^i$ depends on the choice of $\Psi_k ^i$ and encodes the regularity of the norm recovered by adding $X_k ^i$ to the path. In particular, 
    \[
    \begin{cases}
    \gamma^k _i = \alpha_k ^i &\text{if we recover } \|X_{k }^iU^h_j\|_{L^2}^2, \\
    \gamma^k _i = \alpha_k ^i + 1 &\text{if we recover } 2^{-2j}\|X_{k }^iU^h_j\|_{L^2}^2.
    \end{cases}
    \]
    By the definition of $\widetilde{\alpha}$, we have
    \[
    \frac{d}{dt}\L^h_j + c 2^{-2\widetilde{\alpha} j}\eps^{p_K}\sum_{k =1}^K \sum_{i=1}^{M_k } \| X_k ^i U^h_j\|_{L^2}^2 \leq 0,
    \]
    and \eqref{eq:decayhfimp} follows by the same arguments as Proposition \ref{prophf}.
\end{proof}

\begin{remark}[The role of cancellations]
    In Section \ref{sec:ImproveHF},we have seen how, under certain conditions on the coefficients, cancellations may provide a better decay rate for the solution. This is reflected, in Theorem \ref{th:improvedh}, by the fact that a cancellation on node $X^i_k $ decreases the value of $\gamma^i_k $. Consequently, the final value of $\widetilde{\alpha}$ with cancellations is less than or equal to the value without cancellations. 
\end{remark}

\subsection{Low frequencies} The construction is the same. We define recursively the numbers
\[
\beta^i_k  = 
\begin{cases}
    \beta_0 = 0, \\
    \beta^i_k  = \beta^i_{k -1} &\text{if } \d^i_{k -1} = 0, \\
    \beta^i_k  = \beta^i_{k -1} + 1 &\text{if } \d^i_{k -1} = 1,
\end{cases}
\]
with $\d_0 = 0$ as before.
At this point, for every fixed $j \geq 0$,
to every node $X_k ^i \in \mathsf{N}$ we associate the functional 
\[
\Psi_k ^i =
2^{2\beta_k ^ij}\cdot
\begin{dcases}
  \Phi^k _A &\text{if left path at } k -1,\\
  \Phi^k _{B^a} &\text{if right path at } k -1 .
\end{dcases}
\]
Then, the final Lyapunov functional in low frequencies is
\[
\L^\ell_j = \frac12 \|U^\ell_j\|_{L^2}^2 + \sum_{k =1}^K \sum_{i=1}^{M_k } \Psi_k ^i.
\] 
For 
\[
(I, L) = \underset{i, k }{\mathrm{argmax}}\, \beta^i_k ,
\]
we define, as before,
\[
\widetilde{\beta}:=
\begin{dcases}
    \beta^I_L &\text{if we recover } \|X_L^IU^\ell_j\|_{L^2}^2, \\
    \beta^I_L + 1 &\text{if we recover } 2^{2j}\|X^I_LU^\ell_j\|_{L^2}^2.
\end{dcases}
\]
\begin{theorem}[Low-frequency decay]
\label{th:improvedl}  
 For any $\gamma>0$, if $\widetilde{\beta}\geq1$
\begin{equation}
\label{eq:decayhfimp2}
    \|U^\ell\|_{L^2} \leq C(1+t)^{-\frac{1}{4\widetilde{\beta}}}\|U_0^\ell\|_{L^2}.
\end{equation}
If $\widetilde{\beta}=0$, the low-frequency part of the solution decays exponentially.
\end{theorem}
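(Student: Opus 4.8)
The plan is to run the same scheme as in the proof of Theorem \ref{th:improvedh}, replacing the high-frequency building blocks by their low-frequency analogues, Lemmas \ref{lemma:left_low} and \ref{lemma:right_low}, and using Assumption \ref{assump:low} so that only the ``Left'', ``Right'', ``Left OR Right'' and ``Stop'' cases occur in the construction of $\L^\ell_j$. Fix a frequency block $j\le 0$ (recall that $U^\ell_j=0$ for $j\ge 1$, so nothing has to be said in high frequencies). First I would differentiate $\L^\ell_j=\tfrac12\|U^\ell_j\|_{L^2}^2+\sum_{k,i}\Psi^i_k$ in time: the initialization term yields, via \eqref{partialdiss}, the dissipation $\kappa\|U^\ell_j\|_{L^2}^2$, and for every node $X^i_k$ selected by the algorithm I would invoke Lemma \ref{lemma:left_low} or Lemma \ref{lemma:right_low} applied to $\Phi^k_A$ or $\Phi^k_{B^a}$ after multiplying by the prefactor $2^{2\beta^i_kj}$ prescribed by the definition of $\Psi^i_k$.

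The technical core is the absorption cascade, exactly in the spirit of \cite{BZ} but now carrying the extra frequency weights. For each node $X^i_k$ the associated lemma produces, besides the recovered good norm $\tfrac{\eps^{p_k}}{2}2^{2\beta^i_kj}\|X^i_kU^\ell_j\|_{L^2}^2$ (or its $2^{2j}$-weighted version on left nodes), three kinds of error terms: a term $C\eps^{p_k-q_k}2^{2\beta^i_kj}\|X_{k-1}U^\ell_j\|_{L^2}^2$, lower-order terms $C\eps^{p_k}\sum_{i\le k-2}\|X_iU^\ell_j\|_{L^2}^2$, and terms $\eps^{p_k+q_k}2^{2\beta^i_kj}2^{2j}\|X_{k-1}A^2U^\ell_j\|_{L^2}^2$ and the like at level $k+1$. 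The point is that, since $\{p_k,q_k\}$ is admissible (Definition \ref{admissible}), one has $p_{k-1}<p_k-q_k$ and $p_{k+1}<p_k+q_k$, so these errors are absorbed by the good norms recovered one level above, resp.\ one level below, along the same branch; moreover the frequency weights match because the discrepancy bookkeeping gives $\beta^i_k=\beta^{i'}_{k-1}$ or $\beta^{i'}_{k-1}+1$ for the father $X^{i'}_{k-1}$, and $j\le 0$ makes $2^{2j}\le 1$, so discrepancy-induced losses can only help the absorption. When a branch terminates (Case (5)), the ``bad'' norms $\|X_{k-1}A^2U^\ell_j\|_{L^2}$, $\|X_{k-1}AB^aU^\ell_j\|_{L^2}$, $\|X_{k-1}B^aAU^\ell_j\|_{L^2}$, $\|X_{k-1}(B^a)^2U^\ell_j\|_{L^2}$ are, by the very rank conditions defining Case (5) and the Cayley--Hamilton theorem, linear combinations of the $\|X_iU^\ell_j\|_{L^2}$ with $i\le k-1$ — exactly as for the term $I^3_K$ in the proof of Proposition \ref{prophf} via \eqref{CH} — hence already controlled. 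Choosing $\eps$ small and using Assumption \ref{ass:kalman} (which, as explained in Case (5), forces the algorithm to terminate with $r=n$, so the chosen rows span $\R^n$) one gets both $\L^\ell_j\sim\|U^\ell_j\|_{L^2}^2$ and $\sum_{k,i}\|X^i_kU^\ell_j\|_{L^2}^2\gtrsim\|U^\ell_j\|_{L^2}^2$. Combining with $\gamma^i_k\le\widetilde\beta$ and $j\le 0$, which gives $2^{2\gamma^i_kj}\ge 2^{2\widetilde\beta j}$, this yields
\[
\frac{d}{dt}\L^\ell_j + c\,2^{2\widetilde\beta j}\,\L^\ell_j \le 0 .
\]

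From here, Gr\"onwall's lemma gives $\L^\ell_j(t)\lesssim e^{-c2^{2\widetilde\beta j}t}\L^\ell_j(0)$, and summing over $j\le 0$ exactly as in the low-frequency proposition of Section \ref{sec:3} (Bernstein, the embedding $L^1\hookrightarrow\dot B^{-1/2}_{2,\infty}$, and $\sum_{j\le 0}2^{j}e^{-c2^{2\widetilde\beta j}t}\lesssim t^{-1/(2\widetilde\beta)}$) produces \eqref{eq:decayhfimp2}. If $\widetilde\beta=0$, then every branch recovers a full (unweighted) norm and the above differential inequality becomes $\tfrac{d}{dt}\L^\ell_j+c\L^\ell_j\le 0$ for all $j\le 0$, so $\|U^\ell_j(t)\|_{L^2}^2\lesssim e^{-ct}\|U^\ell_{0,j}\|_{L^2}^2$ uniformly in $j$, and summing gives exponential decay of $\|U^\ell\|_{L^2}$. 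I expect the main obstacle to be purely bookkeeping: checking that, for every one of the cases of the algorithm, the frequency weight $2^{2\beta^i_kj}$ carried by $\Psi^i_k$ combines with the weight internal to Lemma \ref{lemma:left_low}/\ref{lemma:right_low} to match exactly the weight of the good norm recovered at the neighbouring node, so that admissibility of $\{p_k,q_k\}$ suffices to close the cascade — the sign condition $j\le 0$ being what makes this work where it failed for the mixed case in Remark \ref{rem:leftright}.
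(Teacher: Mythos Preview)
Your proposal is essentially the paper's intended argument: the paper omits the proof entirely, stating ``The proof is the same as in high frequencies, and is therefore omitted,'' and what you have written is a faithful low-frequency transcription of the proof of Theorem~\ref{th:improvedh} using Lemmas~\ref{lemma:left_low}--\ref{lemma:right_low}, Assumption~\ref{assump:low}, and the low-frequency Proposition of Section~\ref{sec:3}. One small slip: in your initialization you write that \eqref{partialdiss} yields the dissipation $\kappa\|U^\ell_j\|_{L^2}^2$, but of course only the $B^s$-component is dissipated at this stage (the paper's Initialization paragraph contains the same misprint); this does not affect the rest of your cascade, which correctly starts from $\|B^sU^\ell_j\|_{L^2}^2$.
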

The proof is the same as in high frequencies, and is therefore omitted.


\section{Cancellations and Rank-One Matrices}
\label{sec:8}

\noindent
In the previous sections, we have witnessed the important role played by cancellations in the high-frequency decay rate of the solution. Specifically, the two sets of conditions we are interested in are \eqref{cancellation11}, \eqref{cancellation12}
and 
\eqref{cancellation21}, \eqref{cancellation22}.
The former two allow us to recover the sum between $A$-norm and $B^a$-norm in the mixed case, and the latter allows us to obtain better regularity from the functional $\Phi^k_m$. It is then of paramount importance to find conditions on the matrices $A, B^a$ and $B^s$ so that these cancellations appear. As a case study, in this section, we consider the case in which the matrix $B^s$ is rank one, which is a property shared by several physical systems. 
By Lemma \ref{lemma:rank1}, there exist $a \in \R$ and $p \in \R^n$ such that
\[
B^s = a p p^\top.
\]
In the sequel, we will work within the non-restrictive hypothesis $a = 1$. Furthermore, we will highlight the role played by the matrix $B^s$ in the product $X_k$, by simply splitting the latter (with a little abuse of notation) as
\[
X_k = B^s X_k.
\]
Moreover, we conventionally set
\[
X_{-1} = I,
\]
where $I$ is the identity matrix.
Then, in terms of the Euclidean inner product $\langle \cdot,\cdot \rangle $, \eqref{cancellation11} and \eqref{cancellation12} turn into
\begin{align}
\label{modcanc11}
    \bigl\langle B^s X_{k-1} A \partial_x U, \, - B^s X_{k-1} B^a U + m B^s X_{k-1} B^a A^2 U\bigl\rangle = 0 
    \end{align}
    and
    \begin{align}
\label{modcanc12}
    \bigl\langle B^s X_{k-1} A \partial_x U - m B^s X_{k-1} A B^a A\partial_x U, \, B^s X_{k-1} B^a U \bigl\rangle = 0,
\end{align}
while \eqref{cancellation21} and \eqref{cancellation22} become
\begin{equation}
    \label{modcanc21}
\bigl\langle B^s X_{k-1} U - m B^s X_{k-1}A^2 U, \, B^s X_{k-1} B^a A \partial_x U\bigl\rangle = 0,
\end{equation}
and
\begin{equation}
    \label{modcanc22}
\bigl\langle B^s X_{k-1} U - m B^s X_{k-1}AB^a U, \, B^s X_{k-1} B^a A \partial_x U\bigl\rangle = 0.
\end{equation}
We can now present the central result of this section.
\begin{lemma}
    Let $B^s$ be a rank-one, symmetric matrix.
    Then \eqref{modcanc11} is equivalent to
    \[\bigl\langle X_{k-1}p, A \partial_x U\bigl\rangle \bigl\langle B^a X_{k-1}p, (I - mA^2)U\bigl\rangle = 0,\]
    \eqref{modcanc12} is equivalent to
    \[\bigl\langle X_{k-1}p, B^a U\bigl\rangle \bigl\langle A X_{k-1}p, (I - mA^2)\partial_xU\bigl\rangle = 0.\]
    Similarly, \eqref{modcanc21} is equivalent to
    \[
    \bigl\langle B^aX_{k-1}p, A \partial_x U\bigl\rangle \bigl\langle X_{k-1}p, (I - mA^2)U\bigl\rangle = 0,
    \]
    and
    \eqref{modcanc22} is equivalent to
    \[
    \bigl\langle B^aX_{k-1}p, A \partial_x U\bigl\rangle \bigl\langle X_{k-1}p, (I - mAB^a)U\bigl\rangle = 0.
    \]
    \end{lemma}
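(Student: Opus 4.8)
The proof is pure linear algebra and exploits the fact that a rank-one symmetric $B^s$ collapses every vector onto a single direction. By Lemma~\ref{lemma:rank1} we may write $B^s = pp^\top$ with $p \in \R^n$, $|p| \neq 0$ (the normalization $a = 1$ is harmless). The mechanism is the identity: for any matrix $X$ and any vector $w$,
\[
B^s X w = p\,(p^\top X w) = \langle X p, w\rangle\, p,
\]
where $X p$ is the vector it denotes in the statement, consistently with the ``$X = B^s X$'' splitting fixed above. Consequently, in each of \eqref{modcanc11}--\eqref{modcanc22}, \emph{both} arguments of the Euclidean product are scalar multiples of $p$, so the whole pairing equals the product of those two scalars times $|p|^2$. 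Since $|p|^2 > 0$, its vanishing is equivalent to the vanishing of the product of the two scalar coefficients, which is exactly the factorized form claimed; when $X_{k-1}p = 0$, both sides vanish identically and the equivalence is trivial.

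Carrying this out for \eqref{modcanc11}: the left slot $B^s X_{k-1}A\partial_x U$ collapses to $\langle X_{k-1}p, A\partial_x U\rangle\, p$, while the right slot $B^s X_{k-1}(-B^a + mB^aA^2)U$ collapses to $\langle p, X_{k-1}B^a(mA^2 - I)U\rangle\, p$; transferring $B^a$ across the pairing by $(B^a)^\top = -B^a$ rewrites the second scalar as $-\langle B^aX_{k-1}p, (mA^2 - I)U\rangle = \langle B^aX_{k-1}p, (I - mA^2)U\rangle$, giving
\[
\langle B^s X_{k-1}A\partial_x U,\, B^s X_{k-1}(-B^a + mB^aA^2)U\rangle = |p|^2\,\langle X_{k-1}p, A\partial_x U\rangle\,\langle B^aX_{k-1}p, (I - mA^2)U\rangle.
\]
The remaining three identities are treated identically: one slot already carries the ``$B^a$-direction'' and the other the ``$A\partial_x$-direction'', the constant $m$ is simply carried along unchanged, and the sign flips produced by moving $A$ ($=A^\top$) or $B^a$ ($=-(B^a)^\top$) from one argument to the other are absorbed because we are equating to zero. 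For \eqref{modcanc12} one factors $X_{k-1}A(mB^aA - I)$ out of the first slot; for \eqref{modcanc21} and \eqref{modcanc22} the roles of the two slots are swapped, and one pulls $A$ out of $X_{k-1}B^aA\partial_x U$ after the collapse.

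The only delicate point is bookkeeping: one must keep track of which word $X_{k-1}$, $A$ or $B^a$ sits on which side of the Euclidean pairing and of the transposes and sign changes that arise when it is moved, so that the two scalar factors land in precisely the displayed form. There is no analytic content here — no Littlewood--Paley decomposition, no Bernstein inequality, no invocation of the Kalman condition — beyond the purely structural input $\rank(B^s) = 1$ recorded in Lemma~\ref{lemma:rank1}; everything reduces to the rank-one collapse $B^s w \in \R p$ together with the symmetry of $A$ and the skew-symmetry of $B^a$.
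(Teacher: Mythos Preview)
Your argument is correct and is essentially the paper's own proof: both use the rank-one collapse $B^s w = \langle p, w\rangle\, p$ to reduce each condition to a product of two scalar factors, and both dispose of the transpose/sign bookkeeping via the symmetry of $A$ and skew-symmetry of $B^a$. The paper makes the point $X_{k-1}^\top = \pm X_{k-1}$ (since $X_{k-1}$ is a word in $A$ and $B^a$) slightly more explicit than you do in the step ``transferring $B^a$ across'', but as you note the resulting sign appears in both factors and cancels in the product, so your displayed identity is correct as stated.
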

\begin{proof}
    We look at the equivalence for \eqref{modcanc11}. The others are obtained in exactly the same way. Note that
    \[
    B^s X_{k-1} A \partial_x U = pp^\top X_{k-1} A \partial_x U = p (X_{k-1}^\top p)^\top A\partial_x U= p \bigl\langle X_{k-1}^\top p, A\partial_x U\bigl\rangle.
    \]
    By the same token,
    \[
     - B^s X_{k-1} B^a U + m B^s X_{k-1} B^a A^2 U = - p p^\top X_{k-1} B^a (I - mA^2) U = p \bigl\langle  B^a X_{k-1}^\top p,(I - mA^2) U\bigl\rangle.  
    \]
    However, $X_{k-1}$ is simply a product of $A$ and $B^a$. Hence, it holds
    \[
    X_{k-1}^\top = \pm X_{k-1},
    \]
    depending on how many times $B^a$ appears in the product. Since $X_{k-1}$ appears both in the left and in the right terms of the scalar product in \eqref{modcanc11}, the sign is actually irrelevant in this case. Recalling that $p$ is of unit norm, the proof is finished.
\end{proof}
We immediately infer the following corollary.
\begin{corollary}
    If 
    \begin{equation}
    \label{eq:suff11}
        p \in \ker(AX_{k-1}) \cup \ker((I-mA^2)B^aX_{k-1})
    \end{equation}
    and
    \begin{equation}
    \label{eq:suff12}
        p \in \ker(B^aX_{k-1}) \cup \ker((I-mA^2)AX_{k-1}),
    \end{equation}
    respectively, then the conditions \eqref{modcanc11} and \eqref{modcanc12} hold, respectively. Similarly, if
    \begin{equation}
    \label{eq:suff21}
        p \in \ker(AX_{k-1}B^a) \cup \ker((I-mA^2)X_{k-1}),
    \end{equation}
    then the condition \eqref{modcanc21} holds, while if 
    \begin{equation}
    \label{eq:suff22}
        p \in \ker(AX_{k-1}B^a) \cup \ker((I-mAB^a)X_{k-1}),
    \end{equation}
    condition \eqref{modcanc22} holds.
\end{corollary}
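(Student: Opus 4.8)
The plan is to read the corollary off directly from the preceding Lemma. That Lemma has already rewritten each of the four cancellation identities \eqref{modcanc11}--\eqref{modcanc22} as the vanishing of a \emph{product} of two scalar pairings, each of the shape $\bigl\langle w,\, LU\bigr\rangle$ or $\bigl\langle w,\, L\,\partial_x U\bigr\rangle$, with $w\in\{X_{k-1}p,\, B^aX_{k-1}p\}$ and $L$ a fixed matrix assembled from $A$, $B^a$ and $m$. So the only thing to observe is that such a product is zero as soon as one of its two factors is; hence it suffices to kill \emph{either} factor identically in $U$, and the union of kernels appearing in \eqref{eq:suff11}--\eqref{eq:suff22} is exactly the disjunction of the two sufficient conditions so obtained. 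This also explains why the criterion is only sufficient and not claimed to be necessary: a product of two $x$-dependent scalars can vanish without either factor being identically zero.

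First I would record the elementary fact that, for a fixed vector $c\in\R^n$, the pairing $\bigl\langle c, U\bigr\rangle$ vanishes for every admissible $U$ only if $c=0$, and likewise $\bigl\langle c,\partial_x U\bigr\rangle\equiv 0$ forces $c=0$ (on the Fourier side $\i\xi\neq 0$ on the relevant annulus). Therefore a factor $\bigl\langle w, LU\bigr\rangle$ (resp.\ $\bigl\langle w, L\,\partial_x U\bigr\rangle$) is identically zero precisely when $L^\top w=0$, i.e.\ $w\in\ker(L^\top)$. It then remains to match, case by case, the transposed operator $L^\top$ against the kernels in the statement. Since $A$ is symmetric, so are $A^2$ and $I-mA^2$, and transposing these costs nothing; the only signs come from $(B^a)^\top=-B^a$ and from $X_{k-1}^\top=\pm X_{k-1}$ (the sign governed by the parity of the number of $B^a$-factors in $X_{k-1}$), and since $X_{k-1}$ occurs in \emph{both} slots of every product this overall sign is irrelevant. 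Feeding this into the two factors of \eqref{modcanc11} yields $AX_{k-1}p=0$ from the first factor and $(I-mA^2)B^aX_{k-1}p=0$ from the second, i.e.\ $p\in\ker(AX_{k-1})\cup\ker((I-mA^2)B^aX_{k-1})$, which is the implication \eqref{eq:suff11}$\Rightarrow$\eqref{modcanc11}; the three remaining implications follow in exactly the same way, only the matrix standing in front of $U$ or $\partial_x U$ changing from case to case.

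The one delicate point — and the place where a slip is most likely — is precisely this transpose bookkeeping: one must carry the sign $X_{k-1}^\top=\pm X_{k-1}$ and the non-symmetry of $B^a$ (and of composite blocks such as $AB^a$) through consistently, and verify that the operators produced by $\bigl\langle w, LU\bigr\rangle=\bigl\langle L^\top w, U\bigr\rangle$ are indeed those named in \eqref{eq:suff11}--\eqref{eq:suff22}. Once this is checked factor by factor, nothing further is required; in particular no estimates, Gr\"onwall arguments or frequency splitting enter the proof of the corollary.
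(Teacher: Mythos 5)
Your proof is the paper's own argument made explicit: the preceding lemma factors each cancellation identity into a product of two scalar pairings, the symmetry of $A$ and of $I-mA^2$ lets you transpose them across the inner product, a factor $\langle c,U\rangle$ (or $\langle c,\partial_x U\rangle$) vanishes identically iff $c=0$, and the union of kernels is precisely the disjunction of the two resulting sufficient conditions — the paper compresses all of this into the single sentence that it is ``a straightforward consequence of the fact that $A$ and $I-mA^2$ are symmetric.'' The only caveat is that for \eqref{eq:suff21}--\eqref{eq:suff22} the ``follows in exactly the same way'' step deserves the explicit transpose check you yourself flag as the delicate point: transposing the first factor of \eqref{modcanc21} yields $AB^aX_{k-1}p=0$ and the second factor of \eqref{modcanc22} yields $(I+mB^aA)X_{k-1}p=0$, which match the printed kernels $\ker(AX_{k-1}B^a)$ and $\ker((I-mAB^a)X_{k-1})$ only up to the order and sign conventions the paper leaves implicit (and which coincide in the worked examples, where $X_{k-1}=I$).
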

\begin{proof}
    The proof is a straightforward consequence of the fact that $A$ and $I - mA^2$ are symmetric.
\end{proof}
The conditions expressed above are purely algebraic. In general, we expect to be able to find $m$ such that the first holds. For that choice of $m$, the second condition translates into a requirement on the structural parameters of the equation. 

Note that, in theory, the kernels appearing in the second condition might be the trivial set $\{0\}$. In this case, there is no hope for an advantageous cancellation. It is then of interest to understand if, given a hyperbolic system, we can expect a condition of this kind. While a deeper investigation of this question is not the goal of this work, here we outline a connection between \eqref{modcanc11}-\eqref{modcanc12}, \eqref{modcanc21}-\eqref{modcanc22} and the spectrum of $A$. Let us look at the case of \eqref{modcanc11}, the other being symmetrical. Recall that, since $A$ is symmetric, we can orthogonally diagonalize it. To wit, we can find $Q$ such that $Q^{-1} = Q^\top$ and
\[
Q^\top A Q = D := \mathrm{diag}(\lambda_1, \ldots, \lambda_n),
\]
$\lambda_j$ being the eigenvalues of $A$.
We can further characterize the matrix $Q$ by noticing that
\[
Q = 
\bigg[\begin{array}{c|c|c|c}
v_1 &v_2 &\cdots &v_n
\end{array}\bigg],
\]
where $v_i$ are the normalized eigenvectors of $A$ relative to $\lambda_i$. Looking now at $B^s$, we have
\[
\Tilde{B}^s := Q^\top B^s Q =  Q^\top (p p^\top) Q =  (Q^\top p)(Q^\top p)^\top.
\]
In particular, $\tilde{B}^s$ is still rank one and symmetric. Calling $q = Q^\top p$, the matrix takes the form $\tilde{B}^s = qq^\top$. Similarly, we have that
\[
\Tilde{B}^a = Q^\top B^a Q,
\]
is still a skew-symmetric matrix.
Calling
\[
\Tilde{X}_k := Q^\top X_k Q,
\]
simple computations show that
 \[\bigl\langle X_{k-1}p, A \partial_x U\bigl\rangle \bigl\langle B^aX_{k-1}p, (I - mA^2)U\bigl\rangle = \bigl\langle \Tilde{X}_{k-1}q, D \partial_x V\bigl\rangle \bigl\langle \Tilde{B}^a\Tilde{X}_{k-1}q, (I - mD^2)V\bigl\rangle\]
and
\[
\bigl\langle B^aX_{k-1}p, A \partial_x U\bigl\rangle \bigl\langle X_{k-1}p, (I - mA^2)U\bigl\rangle =  \bigl\langle \Tilde{B}^a\Tilde{X}_{k-1}q, D \partial_x V\bigl\rangle \bigl\langle \Tilde{X}_{k-1}q, (I - mD^2)V\bigl\rangle,
\]
where $V = Q^\top U$. Let us consider, for simplicity, the case $\Tilde{X}_{k-1} = I$. Since \eqref{modcanc11} has to hold for every $U$ and $Q$ is invertible, the latter is satisfied if 
\[
q \in \ker(I-mD^2).
\]
Since $D$ is diagonal, the dimension of this kernel is larger if we can choose $m$ so that many of the entries of $m D^2$ are equal to $1$. In other words, the more eigenvalues $A$ has with the same absolute value, and the more "probable" it will be for $q$ to belong to this set.




\section{Applications}
\label{sec:applications}
\noindent
We now provide some motivating examples to show how the algorithm defined in the previous sections works in concrete situations. In what follows, we fix an admissible sequence $(p_k, q_k)$ in the sense of Definition \ref{admissible}. Furthermore, in line with Remark \ref{noLPHF}, we will work in the frequency variable. The reason for this is twofold: first of all, it makes the notation lighter. Secondly, it makes it easier to compare our results with the results already existing in the literature.

\subsection{A 3-by-3 System with cancellation}
\label{ex:3by3}

We begin with a simple 3-by-3 system, which can be seen as the coupling between a wave equation and a transport equation. Consider
\begin{equation}
\label{eq:3b3canc}
\begin{dcases}
    \partial_t u + a\partial_x v + w + u = 0, \\
    \partial_t v + a\partial_x u = 0, \\
    \partial_t w - u + b \partial_x w = 0.
\end{dcases}  
\end{equation}
We have
\[ A = 
\left[\begin{array}{ccc}
    0 & a &0  \\
    a & 0 &0 \\
    0 & 0 &b
\end{array} \right], \quad B^a = 
\left[\begin{array}{ccc}
    0 & 0 &1  \\
    0 & 0 &0 \\
    -1 & 0 &0
\end{array} \right]\quad \text{and }\quad
B^s = 
\left[\begin{array}{ccc}
    1 & 0 &0  \\
    0 & 0 &0 \\
    0 & 0 &0
\end{array} \right].
\]
It is straightforward to check that the inhomogeneous Kalman condition \eqref{condition:span} is satisfied.
The basic energy equality, obtained by rephrasing the system in Fourier space and computing the derivative of $\mathsf{E} = \tfrac{1}{2}|\widehat{U}|^2$, reads
\[
\partial_t \mathsf{E} + |\widehat{u}|^2 = 0.
\]
At this point, we split the analysis into high and low frequencies.

\medbreak
\noindent
\textbf{High Frequencies.} We use the algorithm of Section \ref{sec:4}. Observe that
\[
\rank\begin{bmatrix}
    B^s,\, B^s A
\end{bmatrix}^\top = 2 \quad \text{ and } \quad 
\rank\begin{bmatrix}
    B^s,\, B^s A,\, B^s B^a
\end{bmatrix}^\top = 3.
\]
Consequently, we fall into the mixed case. To apply Lemma \ref{lemma:magic}, we need to verify the assumptions. Since $\ell = 1$ we have $X_{\ell-1} = X_0 = B^s$. Assumption \ref{ass1} is verified since
\[
B^s A^2 U = a^2 u = a^2 B^s U, \quad \text{ and } \quad B^s A B^a U = 0.
\]
Besides,
\[
- B^s B^a U + m B^s B^a A^2 U = - u + m b^2 u = 0 \quad \iff \quad  m = \frac{1}{b^2}
\]
so Assumption \ref{ass2} is satisfied. Finally, 
\[
B^s B^a A B^a U = - b u = - b B^s U,
\]
and Assumption \ref{ass3} holds. At this point, we can check the conditions for cancellation. Since we are using Lemma \ref{lemma:magic}, this amounts to verifying \eqref{cancellation21}. Fixing $m = 1/b^2$, we have
\[
\bigl\langle B^s U - \frac{1}{b^2} B^s A^2 U, B^s B^a A\partial_x U\r = 
\bigl\langle u - \frac{a^2}{b^2}u, \partial_x w\r = 0 \quad \iff \quad a^2 = b^2. 
\]
If the coefficients of our system satisfy this equality, we can leverage cancellations to obtain a better rate of decay. According to the discussion in Section \ref{sec:7}:
\begin{itemize}
    \item If $a^2 \neq b^2$, we consider the Lyapunov functional
\begin{align*}
\L^h &= \frac{1}{2}|\widehat{U}|^2+  \frac{\eps}{|\xi|^2}{\rm Im} \l B^s  \widehat{U}, \xi B^s A \widehat{U}\r + \frac{\eps}{|\xi|^2}{\rm Re} \l B^s \widehat{U}, B^s B^a \widehat{U}\r + \frac{2\eps}{|\xi|^2} {\rm Re}\l B^s A \widehat{U}, B^s B^a A \widehat{U}\r \\
&=\frac{1}{2}|\widehat{U}|^2 +\frac{\eps}{|\xi|^2} {\rm Im}\l \widehat{u}, \xi\widehat{v}\r + \frac{a\eps}{|\xi|^2}{\rm Re} \l \widehat{u},\widehat{w}\r + \frac{2ab\eps}{|\xi|^2} {\rm Re}\l \widehat{v}, \widehat{w}\r.
\end{align*}
 \item If $a^2 = b^2$, we consider the Lyapunov functional
\begin{align*}
\L^h &= \frac{1}{2}|\widehat{U}|^2+ \frac{\eps}{|\xi|^2}{\rm Im} \l  B^s \widehat{U}, \xi B^sA \widehat{U}\r + \frac{\eps}{|\xi|^2}{\rm Re} \l B^s A \widehat{U}, B^s B^a A \widehat{U}\r \\ 
&\qquad\qquad\,\,+ \eps\sqrt{\eps}{\rm Re}\l B^s \widehat{U}, B^sB^a \widehat{U}\r + \eps\sqrt{\eps}{\rm Re} \l B^s A \widehat{U}, B^s B^a A \widehat{U}\r \\
&=\frac{1}{2}|\widehat{U}|^2 +\frac{\eps}{|\xi|^2}{\rm Im} \l\widehat{u}, \xi\widehat{v}\r + \frac{ab\eps}{|\xi|^2} {\rm Re}\l \widehat{v}, \widehat{w}\r + a\eps\sqrt{\eps}\, {\rm Re}\l \widehat{u},\widehat{w}\r + ab\eps\sqrt{\eps}\, {\rm Re}\l \widehat{v}, \widehat{w}\r.
\end{align*}
\end{itemize}
In the spirit of Theorem \ref{th:improvedh}, for $|\xi|\geq1$, the above construction with a suitably small $\varepsilon>0$ implies $|\widehat{U}|^2\lesssim e^{-c|\xi|^{-2}t}|\widehat{U}(0)|^2$ when $a^2\neq b^2$ and $|\widehat{U}|^2\lesssim e^{-ct}|\widehat{U}(0)|^2$ when $a^2=b^2$. Consequently, we arrive at
\[
\|U^h\|_{L^2} \leq C(1+t)^{-\gamma/2}\|U_0\|_{H^\gamma}, \quad \text{ if } a^2 \neq b^2
\]
and
\[
\|U^h\|_{L^2} \leq Ce^{-ct}\|U_0\|_{L^2}, \quad\quad  \text{ if } a^2 = b^2.
\]

\noindent
\textbf{Low Frequencies.}
In low frequencies, due to the presence of the mixed term, we cannot apply the theory of Section \ref{sec:ImproveLF}. Nevertheless, we can still use the general Kalman approach of Section \ref{sec:2}. By direct computations, the Lyapunov functional to consider in the low frequency regime is
\begin{align*}
\L^\ell&=\dfrac{1}{2}|\widehat{U}|^2+\eps\,{\rm Re}\l B^sU,B^s(\i \xi A+B^a)\widehat{U}\r+\eps\sqrt{\eps}\,{\rm Re}\l B^s(\i \xi A+B^a)\widehat{U},B^s(\i \xi A+B^a)^2\widehat{U}\r \\
&=\dfrac{1}{2}|\widehat{U}|^2+\eps\,{\rm Re}\l \widehat{u},i\xi a \widehat{v} + \widehat{w}\r+\eps\sqrt{\eps}\,{\rm Re}\l i\xi a \widehat{v} + \widehat{w},\widehat{u}+ |\xi|^2 a^2 \widehat{u} - i \xi b \widehat{w} \r.
\end{align*}
Theorem \ref{thm:decayKalman} then ensures that the solution decays. Although a general formula is not provided, in this simple case we can at least obtain an estimate for the decay rate. By Lemma \ref{lemma:equiv} and Lemma \ref{lemma:lowerbound}, this amounts to estimating the norm of the matrix $\widetilde{\mathsf{M}}^{-1}$, which for the system at hand reads
\[
\widetilde{\mathsf{M}}^{-1} =
\left[\begin{array}{ccc}
    1 & 0 &0  \\
    0 & i a \xi &1 \\
    -1 - a^2|\xi|^2 & 0 &i b \xi
\end{array} \right].
\]
Since by construction this matrix is non-singular, it is a well-known fact that the norm of the inverse is simply the inverse of the highest order singular value $\sigma_3(\widetilde{\mathsf{M}})$ (see also Remark \ref{rem:svd}). But the latter can also be characterized as the square root of the smallest eigenvalue of $\widetilde{\mathsf{M}}^\top \widetilde{\mathsf{M}}$. Hence, we compute the characteristic polynomial of this matrix, finding
\[
p(x) = -x^3
+ x^2 \left(3 + a^2 \xi^2 - b^2 |\xi|^2 + a^4 \xi^4\right) 
+ x \left(-2 + b^2 \xi^2 + a^4 \xi^4 - a^2 b^2 \xi^4 + a^6 \xi^6\right)
 + a^2 b^2 \xi^4.
\]
Finally, we observe that for every $\xi$ small enough, $p(a^2b^2\xi^4/4) > 0$ and $p(a^2b^2\xi^4) < 0$, meaning that $\sigma_3(\widetilde{\mathsf{M}}) \in (a^2b^2\xi^4/4, a^2b^2\xi^4)$. We conclude that $\beta$, appearing in \eqref{decay:beta} and thus in Theorem \ref{thm:decayKalman}, is equal to 2. This can also be verified numerically, computing the eigenvalues of $i \xi A$ for small values of $\xi$.

\subsection{The Timoshenko System}
Next, we turn to the Timoshenko system
\begin{equation}\label{eq:timo0}
    \begin{dcases}
    \partial_{tt}^2\varphi-\partial_x\big(\partial_x\varphi-\psi\big)=0,\\
    \partial_{tt}^2 \psi-\partial_x\big( \sigma(\partial_x\varphi) \big)-(\partial_x\varphi-\psi)+b\partial_t\varphi=0
        \end{dcases}
\end{equation}
where the smooth function $\sigma$ satisfies $g'_*(s)>0$ for any $s\in\mathbb{R}$. System \eqref{eq:timo0} describes the transverse vibrations of a beam; see \cite{timo1,timo2} and see \cite{IHK} and references therein its mathematical analysis.

Defining the sound speed $a^2=\sigma(0)$ and the variables
\begin{align}\label{charge}
u_1=\partial_x\varphi-\psi,\quad u_2=\partial_t \varphi,\quad u_3=a \partial_x\psi,\quad u_4=\partial_t \psi,
\end{align}
the system \eqref{eq:timo0} rewrites
\begin{equation}
    \label{eq:timo}
    \begin{dcases}
        \partial_t u_1 - \partial_x u_2 + u_4 = 0, \\
        \partial_t u_2 - \partial_x u_1 = 0, \\
        \partial_t u_3 - a \partial_x u_4 = 0, \\
        \partial_t u_4 - a \partial_x u_3 - u_1 + b u_4 =\sigma(\frac{u_3}{a})-\sigma(0)-\sigma'(0)\frac{u_3}{a}.
    \end{dcases}
\end{equation}
The linear system for \eqref{eq:timo} can be written in the form \eqref{eq:1-1} with
\[
A =
\begin{bmatrix}
   0 &-1 &0 &0 \\
   -1 &0 &0 &0 \\
   0 &0 &0 &-a \\
   0 &0 &-a &0
\end{bmatrix}, \quad
B^a =
\begin{bmatrix}
   0 &0 &0 &1 \\
   0 &0 &0 &0 \\
   0 &0 &0 &0 \\
   -1 &0 &0 &0
\end{bmatrix}, \quad
B^s =
\begin{bmatrix}
   0 &0 &0 &0 \\
   0 &0 &0 &0 \\
   0 &0 &0 &0 \\
   0 &0 &0 &b
\end{bmatrix}.
\]
We indicate with $U$ the solution vector $(u_1,u_2,u_3,u_4)$ and work in the Fourier space.
To initialize the algorithm, we compute the derivative of the Fourier energy $\mathsf{E} = \tfrac{1}{2}|\widehat{U}(t,\xi)|^2$, obtaining the basic energy equality
\[
\partial_t \mathsf{E} + b |\widehat{u}_4|^2 = 0.
\]
At this point, setting the first node $X_0 = B^s$, we check if one of the conditions \eqref{eq:rankleft}-\eqref{eq:rankmix2} holds. For the system at hand, we have
\[
\rank\begin{bmatrix}
    B^s,\, B^s A
\end{bmatrix}^\top = 2 \quad \text{ and } \quad 
\rank\begin{bmatrix}
    B^s,\, B^s A,\, B^s B^a
\end{bmatrix}^\top = 3,
\]
so that \eqref{eq:rankmix} holds and we fall into the mixed case. In order to apply Lemma \ref{lemma:magic}, we need to verify the three assumptions. Since
\[
B^sA^2 = a^2 B^s, \text{ and } B^sAB^a = B^sB^aAB^a = 0.
\]
Assumptions \ref{ass1} and \ref{ass3} are satisfied.
As for Assumption \ref{ass2}, we need to find $m$ so that
\[
\l B^s A \partial_x U, -B^sB^aU + m B^s B^a A^2 U\r = a b^2 \l \partial_x u_1, u_3\r - m a b^2 \l \partial_x u_1, u_3\r = 0.
\]
Clearly, this is satisfied for $m=1$. Before writing down the functional, we look for cancellation conditions with $m=1$ now fixed. Condition \eqref{cancellation21} reads
\[
- b^2 \l u_2, \partial_x u_4\r + a^2 b^2 \l u_2, \partial_x u_4\r = 0,
\]
and it is satisfied if $a^2 = 1$. This is precisely the equal wave speed condition that can be found in \cite{IHK}. In conclusion, we add the nodes $X_1^1 = B^s A$ and $X_1^2 = B^s B^a$. Since the discrepancy of the first node $B^s$ is set conventionally to zero, we obtain the functionals
\[
\Psi_1^1 = \frac{\eps}{|\xi|^2}\left({\rm Im}\l B^s \widehat{U}, \xi B^s A\widehat{U}\r + {\rm Re}\l B^s A \widehat{U}, B^sB^a A \widehat{U}\r\right) = \frac{\eps a b^2}{|\xi|^2} {\rm Im} \l \widehat{u}_4, \xi\widehat{u}_3\r - \frac{\eps a b^2}{|\xi|^2}{\rm Re} \l \widehat{u}_3, \widehat{u}_2\r,
\]
and
\[\Psi_1^2 =
\begin{dcases}
    \frac{\eps}{|\xi|^2}{\rm Re}\l B^s \widehat{U}, B^sB^a \widehat{U}\r + \frac{\eps}{|\xi|^2}{\rm Re}\l B^s A \widehat{U}, B^s B^a A \widehat{U}\r \\ 
    \qquad= -\frac{\eps b^2}{|\xi|^2}{\rm Re} \big(\l \widehat{u}_4,\widehat{u}_1\r + a \l \widehat{u}_3,\widehat{u}_2\r\big) 
    &\text{if } a^2 \neq 1,  \\ 
    \eps^{p_2}{\rm Re}\l B^s \widehat{U}, B^s B^a \widehat{U}\r + \eps^{p_2}{\rm Re} \l B^s A \widehat{U}, B^s B^a A \widehat{U}\r\\ 
    \qquad= -\eps^{p_2}b^2{\rm Re} \big(\l \widehat{u}_4,\widehat{u}_1\r + a \l \widehat{u}_3,\widehat{u}_2\r\big) &\text{if } a^2 = 1.
\end{dcases}
\]
We now move to the next step. It is not difficult
to see that the only way to improve the rank is to add the node $X_2^1 = B^s B^a A$. Indeed, 
\[
\rank\begin{bmatrix}
    B^s,\, B^s A,\, B^s B^a, \, B^s B^a A
\end{bmatrix}^\top = 4.
\]
The functional here depends on whether $a^2 = 1$ or not. In the cancellation case, the discrepancy of the node $B^sB^a$ is still equal to zero, otherwise it is 1. Accordingly, we get
\[
\Psi_2^1 = 
\begin{dcases}
    \frac{\eps^{p_2} }{|\xi|^4}{\rm Im}\l B^s B^a \widehat{U}, \xi B^s B^a A \widehat{U}\r = \frac{\eps^{p_2}b^2 }{|\xi|^4} {\rm Im}\l \widehat{u}_1, \xi\widehat{u}_{2}\r &\text{if } a^2 \neq 1,  \\ 
    \frac{\eps^{p_3}}{|\xi|^2}{\rm Im}\l B^s B^a \widehat{U},\xi B^s B^a A \widehat{U}\r = \frac{\eps^{p_3}b^2}{|\xi|^2}{\rm Im} \l \widehat{u}_1, \xi \widehat{u}_2\r &\text{if } a^2 = 1.
\end{dcases}
\]
Since the rank is now equal to the number of variables, we can stop. For $a^2\neq1$, the complete Lyapunov functional reads
\begin{align*}
\L^h = &\frac{1}{2}|\widehat{U}|^2 + \frac{\eps a b^2}{|\xi|^2} \big( \xi {\rm Im}\l \widehat{u}_4, \widehat{u}_3\r - {\rm Re}\l \widehat{u}_3, \widehat{u}_2\r \big) \\
&-\frac{\eps b^2}{|\xi|^2} {\rm Re}\big(\l \widehat{u}_4,\widehat{u}_1\r + a \l \widehat{u}_3,\widehat{u}_2\r\big) + \frac{\eps^{p_2}b^2}{|\xi|^4} {\rm Im}\l \widehat{u}_1, \xi\widehat{u}_{2}\r,
\end{align*}
 On the other hand, when $a^2 = 1$,
\begin{align*}
\L^h = &\frac{1}{2}|\widehat{U}|^2 + \frac{\eps a b^2}{|\xi|^2} \big( \xi{\rm Im} \l \widehat{u}_4, \widehat{u}_3\r - {\rm Re}\l \widehat{u}_3, \widehat{u}_2\r \big) \\
&-\eps^{p_2}b^2 \big(\l \widehat{u}_4,\widehat{u}_1\r + a \l \widehat{u}_3,\widehat{u}_2\r\big) +\frac{\eps^{p_3}b^2}{|\xi|^2} {\rm Im}\l \widehat{u}_1,\xi \widehat{u}_2\r.
\end{align*}
Thus, using Theorem \ref{th:improvedh}, we get the properties
\[
\L^h\sim|\widehat{U}|^2 ,\quad  \frac{d}{dt}\L^h + \frac{c}{\xi^{2\alpha}}\L^h \leq 0,
\]
where $\alpha = 1$ if $a^2 \neq 1$ and $\alpha = 0$ if $a^2 = 1$. In turn, for any $\gamma>0$, this yields the usual decay rate of the Timoshenko system in high frequencies \cite{IHK}:
\[
\|U^h\|_{L^2} \leq C(1+t)^{-\gamma/2}\|U_0\|_{H^{\gamma\alpha}}, \quad \text{ if } a^2 \neq 1,
\]
and
\[
\|U^h\|_{L^2} \leq Ce^{-ct}\|U_0\|_{L^2},\quad  \quad \text{ if } a^2 = 1.
\]
\begin{remark}
\label{rem:rankone}
    Calling 
    \[
    p = [0, 0, 0, 1],
    \]
    we have 
    \[
    B^s = \gamma pp^\top.
    \]
    In particular, $B^s$ is rank one, so we fall into the framework of Section \ref{sec:8}. Note that 
    \[
    (I - m A^2)B^a = 
    \begin{bmatrix}
    0 &0 &0 &1-m \\
    0 &0 &0 &0 \\
    0 &0 &0 &0 \\
    -1+ma^2 &0 &0 &0
\end{bmatrix}.
    \]
    Hence, unless $m$ is equal to $1$, $p$ does not belong to its kernel and \eqref{eq:suff11} is not satisfied (as $p$ certainly does not belong to $\ker(A)$). Once we set $m=1$ we see that \eqref{eq:suff21} is satisfied only if $a^2 = 1$. 
\end{remark}
\subsection{The Timoshenko System with Memory} In \cite{MoriMem}, the following Timoshenko System with Memory is considered:
\begin{equation}\label{eq:timomem0}
    \begin{dcases}
    \partial_{tt}^2\varphi-\partial_x\big(\partial_x\varphi-\psi\big)=0,\\
    \partial_{tt}^2 \psi-\partial_x\big( g(\partial_x\varphi) \big)-(\partial_x\varphi-\psi)+b\partial_t\varphi+b g\ast \partial_{xx}^2 \psi =0,
        \end{dcases}
\end{equation}
where the term $ g\ast \partial_{xx}^2 \psi =\int_0^t g(t-\tau) \partial_{xx}^2 \psi(\tau)d\tau$ is referred to as a memory-type damping. Taking $g(t)=\mu e^{-\mu t}$ with $\mu>0$ and rewriting \eqref{eq:timomem0} as a first-order system, its linearization around a steady solution reads
\begin{equation}
    \label{eq:timomem}
    \begin{dcases}
        \partial_t u_1 - \partial_x u_3 + u_2 = 0, \\
        \partial_t u_2 - c_1 \partial_x u_4 + c_2 \partial_x u_5 - u_1 = 0, \\
        \partial_t u_3 - \partial_x u_1 = 0, \\
        \partial_t u_4 - c_1 \partial_x u_2 = 0, \\
        \partial_t u_5 + c_2 \partial_x u_2 + \mu u_5 = 0.
    \end{dcases}
\end{equation}
This equations are derived from the Timoshenko system \eqref{eq:timo}, when memory effect on the rotation angle are present.
The matrices are
\[
A =
\begin{bmatrix}
   0 &0 &-1 &0 &0 \\
   0 &0 &0 &-c_1 &c_2 \\
   -1 &0 &0 &0 &0 \\
   0 &-c_1 &0 &0 &0 \\
   0 &c_2 &0 &0 &0
\end{bmatrix}, \quad
B^a =
\begin{bmatrix}
   0 &1 &0 &0 &0 \\
   -1 &0 &0 &0 &0 \\
   0 &0 &0 &0 &0 \\
   0 &0 &0 &0 &0 \\
   0 &0 &0 &0 &0
\end{bmatrix}, \quad
B^s =
\begin{bmatrix}
   0 &0 &0 &0 &0 \\
   0 &0 &0 &0 &0 \\
   0 &0 &0 &0 &0 \\
   0 &0 &0 &0 &0 \\
   0 &0 &0 &0 &\mu
\end{bmatrix}.
\]
We denote $U=(u_1,u_2,u_3,u_4,u_5)$. Since it will not play a role in the subsequent computations, we set $\mu = 1$. 
After the basic energy equality
\[
\partial_t \mathsf{E} + |\widehat{u}_5|^2 = 0,
\]
with $ \mathsf{E}=\frac{1}{2}|\widehat{U}|^2$, we begin by moving left, adding the node $X_1^1 = B^sA$. Indeed,
\[
\rank\begin{bmatrix}
    B^s,\, B^s A
\end{bmatrix}^\top = 2, \quad \text{ and } \quad
\rank\begin{bmatrix}
    B^s,\, B^s B^a
\end{bmatrix}^\top = 1.
\]
Hence, we obtain
\[
\Psi_1^1 = \frac{\eps}{|\xi|^2}{\rm Im}\l  B^s \widehat{U}, \xi B^s A \widehat{U}\r = \frac{\eps c_2 }{|\xi|^2}{\rm Im} \l \widehat{u}_5, \xi\widehat{u}_{2} \r.
\]
Moving to the second level, we notice that we are in the mixed path situation. It is not difficult to check that Assumptions \ref{ass1}-\ref{ass3} hold and that Assumption \ref{ass2} is satisfied if $m=1$.
Substituting this into \eqref{cancellation21} we find that there is a cancellation if 
\[
c_1^2 + c_2^2 = 1,
\]
which is exactly the cancellation condition in \cite{MoriMem}. Note that even in this case we could have worked as in Remark \ref{rem:rankone}, since $B^s$ is a rank one matrix. We add the nodes $X_2^1 = B^s A^2$ and $X_2^2 = B^s A B^a$ and obtain the functionals
\[
\Psi_2^1 = \frac{\eps^{p_2} c_2^2 }{|\xi|^2}{\rm Im} \l \xi\widehat{u}_2, -c_1 \widehat{u}_{4} + c_2 \widehat{u}_{5}\r + \frac{\eps^{p_2} c_2^2 }{|\xi|^2}{\rm Re} \l \widehat{u}_3,  -c_1 \widehat{u}_{4} + c_2 \widehat{u}_{5}\r,
\]
and
\[
\Psi_2^2 = 
\begin{dcases}
    -\frac{\eps^{p_2} c_2^2}{|\xi|^2}{\rm Re}\l \widehat{u}_2, \widehat{u}_1\r +\frac{\eps^{p_2} c_2^2 }{|\xi|^2}{\rm Re} \l \widehat{u}_3,  -c_1 \widehat{u}_{4} 
    + c_2 \widehat{u}_{5}\r &\text{if } c_1^2 + c_2^2 \neq 1, \\
   -\eps^{p_3} c_2^2{\rm Re}\l \widehat{u}_2, \widehat{u}_1\r + \eps^{p_3} c_2^2 {\rm Re} \l \widehat{u}_3,  -c_1 \widehat{u}_{4} + c_2 \widehat{u}_{5}\r &\text{if } c_1^2 + c_2^2 = 1.
\end{dcases}
\]
Finally, we observe that the last node needed is $X_3^1 = B^s A B^a A$, yielding
\[
\Psi_3^1 = 
\begin{dcases}
    -\frac{\eps^{p_3} c_2^2}{|\xi|^4}{\rm Im}\l \widehat{u}_1, \xi\widehat{u}_{3}\r &\text{if } c_1^2 + c_2^2 \neq 1, \\
    -\frac{\eps^{p_4} c_2^2}{|\xi|^2} {\rm Im}\l \widehat{u}_1, \xi\widehat{u}_{3}\r  &\text{if } c_1^2 + c_2^2 = 1.
\end{dcases}
\]
The complete Lyapunov functional is then, if $c_1^2 + c_2^2 \neq 1$,
\begin{align*}
\L^h = &\frac{1}{2}|\widehat{U}|^2 + \frac{\eps c_2 }{|\xi|^2}{\rm Im} \l \widehat{u}_5,\xi\widehat{u}_{2} \r +\frac{\eps^{p_2} c_2^2 }{|\xi|^2} {\rm Im}\l \widehat{u}_2, -c_1 \xi \widehat{u}_{4} + c_2 \xi\widehat{u}_{5}\r \\
&+ \frac{2\eps^{p_2} c_2^2}{|\xi|^2}{\rm Re} \l \widehat{u}_3,  -c_1 \widehat{u}_{4} + c_2 \widehat{u}_{5}\r - \frac{\eps^{p_2} c_2^2}{|\xi|^2}{\rm Re}\l \widehat{u}_2, \widehat{u}_1\r - \frac{\eps^{p_3} c_2^2 }{|\xi|^4}{\rm Im}\l \widehat{u}_1, \xi\widehat{u}_{3}\r
\end{align*}
 and in the cancellation case $c_1^2 + c_2^2 = 1$,
\begin{align*}
\L^h &=  \frac{1}{2}|\widehat{U}|^2 +\frac{\eps c_2 }{|\xi|^2}{\rm Im} \l \widehat{u}_5, \xi\widehat{u}_{2} \r +\frac{\eps^{p_2} c_2^2 }{|\xi|^2} {\rm Im}\l \widehat{u}_2, -\xi c_1 \widehat{u}_{4} + c_2\xi\widehat{u}_{5}\r + \frac{\eps^{p_2} c_2^2}{|\xi|^2}{\rm Re} \l \widehat{u}_3,  -c_1 \widehat{u}_{4} + c_2 \widehat{u}_{5}\r \\
&- \eps^{p_3} c_2^2{\rm Re}\l \widehat{u}_2, \widehat{u}_1\r+ \frac{\eps^{p_3} c_2^2}{|\xi|^2}{\rm Re} \l \widehat{u}_3,  -c_1 \widehat{u}_{4} + c_2 \widehat{u}_{5}\r -\frac{\eps^{p_4} c_2^2 }{|\xi|^2}{\rm Im}\l \widehat{u}_1, \xi\widehat{u}_{3}\r.
\end{align*}
Then, following using Theorem \ref{th:improvedh}, we obtain
\[
\L^h\sim |\widehat{U}^h|,\quad \frac{d}{dt}\L^h + \frac{c}{\xi^{2\alpha}}\L^h \leq 0,
\]
where $\alpha = 1$ if there is no cancellation and $\alpha = 0$ in the case of cancellation. This leads to the decay rates
\[
\|U^h\|_{L^2} \leq C(1+t)^{-\gamma/2}\|U_0\|_{H^{\gamma\alpha}}, \quad \text{ if }\quad c_1^2+c_2^2 \neq 1,
\]
and
\[
\|U^h\|_{L^2} \leq Ce^{-ct}\|U_0\|_{L^2},\quad  \quad \text{ if }\quad c_1^2+c_2^2 = 1.
\]
Such decay rates are predicted by the spectral analysis.



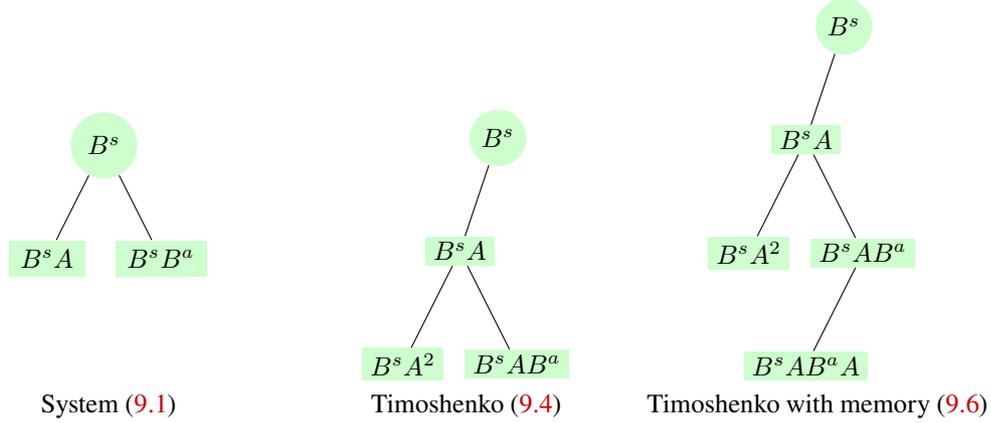
\begin{figure}[h!]
\begin{minipage}[t]{0.33\linewidth}
    \centering
     \begin{tikzpicture}[scale=1]
            [level distance=15mm,
        every node/.style={fill=green!20,inner sep=1pt},
        level 1/.style={sibling distance=10mm,nodes={fill=red!0}},
        level 2/.style={sibling distance=15mm,nodes={fill=red!20}}]
    \node[fill=green!20,circle] {$B^s$}
        child{node[fill=green!20]{$B^sA$}}
        child{node[fill=green!20]{$B^sB^a$}
            child[edge from parent/.style={draw=none}] {node[draw=none]{}}};
        \end{tikzpicture}\\
        System \eqref{eq:3b3canc}
\end{minipage}
\begin{minipage}[t]{0.33\linewidth}
        \centering
        \begin{tikzpicture}
            [level distance=15mm,
        every node/.style={fill=green!20,inner sep=2pt},
        level 1/.style={sibling distance=10mm,nodes={fill=red!0}},
        level 2/.style={sibling distance=15mm,nodes={fill=red!20}}]
    \node[fill=green!20,circle] {$B^s$}
        child{node[fill=green!20]{$B^sA$}
            child{node[fill=green!20]{$B^sA^2$}}
            child{node[fill=green!20]{$B^sAB^a$}
          }}
        child[edge from parent/.style={draw=none}] {node[draw=none]{}};
        \end{tikzpicture}\\
        Timoshenko \eqref{eq:timo}
    \end{minipage}%
    \begin{minipage}[t]{0.33\linewidth}
        \centering
        \begin{tikzpicture}
        [level distance=15mm,
        every node/.style={fill=green!20,inner sep=2pt},
        level 1/.style={sibling distance=10mm,nodes={fill=red!0}},
        level 2/.style={sibling distance=15mm,nodes={fill=red!0}}]
    \node[fill=green!20,circle] {$B^s$}
        child{node[fill=green!20]{$B^sA$}
            child{node[fill=green!20]{$B^sA^2$}}
            child{node[fill=green!20]{$B^sAB^a$}
            child{node[fill=green!20]{$B^sAB^aA$}}
            child[edge from parent/.style={draw=none}] {node[draw=none]{}}  }}
        child[edge from parent/.style={draw=none}] {node[draw=none]{}};
        \end{tikzpicture}\\
        Timoshenko with memory \eqref{eq:timomem}
    \end{minipage}
    \caption{The paths followed in the three examples.}
    \label{fig:timo}
\end{figure}

\section{Conclusions and Future Work}
\label{sec:10}

\noindent
In this work, we have introduced a new Kalman-type stability condition for hyperbolic systems with non-symmetric relaxation. Using this condition, we have shown that the solutions decay at a certain rate as $t \to \infty$. To identify this rate, we have proposed an algorithm to construct a Lyapunov functional. This strategy can be applied to any linear hyperbolic system, to systematically estimate the rate of decay of the solution, and to uncover advantageous algebraic cancellations in the high-frequency regime.

We highlight two potential research directions which remain to be explored. 
\begin{enumerate}
    \item The first crucial question concerns the optimal decay rate. The primary reason why our algorithm may not provide the optimal decay rate is the choice \ref{alwaysleft}, which makes the chosen path on the tree unique. One could theoretically list all the possible paths on the tree leading to the recovery of the dissipation of every involved variable, and select the one with the best decay rate. On the other hand, finding conditions on $A,B^a$ and $B^s$ to ensure that the chosen path yields the optimal decay rate is an interesting open problem.

    \item It would be equally interesting to extend our framework to partially diffusive systems, namely to equations of the form
    \[
    \partial_t U + A \partial_x U + B U - C \partial_{xx}U = 0,
    \]
    where $C$ is a symmetric matrix whose rank is not full.
   In this setting, four operators, two dissipative and two conservative, would interact together, rendering the analysis more intricate.  Nonetheless, it is possible to extend the inhomogeneous Kalman condition to this setting and design a procedure to retrieve the decay rates of such systems following the methodology developed in the present paper.
\end{enumerate}
\vspace{1cm}
\section*{Acknowledgments}
\noindent
T. Crin-Barat is supported by the project ANR-24-CE40-3260 -- Hyperbolic Equations, Approximations \& Dynamics (HEAD). L. Liverani is supported by the Alexander von Humboldt Fellowship for Postdoctoral Researchers. L.-Y. Shou is supported by the National Natural Science Foundation of China (12301275). E. Zuazua has been funded by the Alexander von Humboldt-Professorship program and the Transregio 154 Project “Mathematical Modelling, Simulation and Optimization Using the Example of Gas Networks” of the DFG, the ModConFlex Marie Curie Action, HORIZON-MSCA-2021-DN-01, the COST Action MAT-DYN-NET, grants PID2020-112617GB-C22 and TED2021-131390B-I00 of MINECO (Spain), and by the Madrid Government -- UAM Agreement for the Excellence of the University Research Staff in the context of the V PRICIT (Regional Programme of Research and Technological Innovation).

\vspace{1cm}
\appendix

\section{Linear Algebra results on Rank-one Matrices}\label{sec:appendixB}

 We begin by recalling basic facts.
\begin{lemma}
\label{lemma:rank1}
    The matrix $A\in \mathbb{M}(n,n)$ is a rank-one symmetric matrix if and only if there exists $a \in \R$ and $p\in \R^n$ of unitary norm such that
    \[
    A = app^\top,
    \]
    where $pp^\top$ is the outer product of $p$ with itself.
\end{lemma}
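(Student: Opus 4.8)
The plan is to prove both implications by elementary means, bypassing even the spectral theorem, and then to remark on the one genuinely delicate point (the degenerate value of $a$).

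For the \emph{sufficiency} direction, I would start from $A = app^\top$ with $p\in\R^n$, $|p|=1$, and simply verify the two asserted properties. Symmetry is immediate: $A^\top = a(pp^\top)^\top = a pp^\top = A$. For the rank, observe that the $i$-th column of $A$ equals $a p_i p$, so it lies in $\mathrm{span}\{p\}$, whence $\rank(A)\le 1$; moreover $Ap = ap(p^\top p) = a|p|^2 p = ap$, which is nonzero precisely when $a\neq 0$. Thus $A$ is genuinely rank-one exactly in the case $a\neq 0$, the value $a=0$ producing $A=0$; this is the case implicitly meant in the statement.

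For the \emph{necessity} direction, assume $A$ is symmetric with $\rank(A)=1$. Since the column space of $A$ is one-dimensional, I would pick a unit vector $p$ spanning it; then every column of $A$ is a scalar multiple of $p$, i.e. $A = p\,w^\top$ for some $w\in\R^n$. Imposing $A = A^\top$ gives $pw^\top = wp^\top$; applying both sides to the vector $p$ yields $p(w^\top p) = w(p^\top p) = w$, so $w = (w^\top p)\,p$. Setting $a := w^\top p\in\R$ then gives $A = a\,pp^\top$, with $a\neq 0$ because $A\neq 0$. The representation is unique only up to the sign of $p$ (replacing $p$ by $-p$ leaves $pp^\top$ unchanged), so no real ambiguity arises.

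There is no substantive obstacle here; the only points requiring a little care are the degenerate case $a=0$ — which corresponds to the zero matrix rather than to a true rank-one matrix, so ``rank-one symmetric'' should be read as the $a\neq0$ case — and the harmless sign ambiguity in the choice of $p$. If one prefers a more structural argument for necessity, it can equivalently be read off from the spectral theorem: orthogonally diagonalize $A = Q\Lambda Q^\top$; since $\rank(A)=1$, exactly one diagonal entry $\lambda$ of $\Lambda$ is nonzero, and then $A = \lambda\,qq^\top$ with $q$ the corresponding unit column of $Q$.
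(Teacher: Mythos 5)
Your proof is correct and complete. The paper itself states this lemma without proof, introducing it merely as a ``basic fact'' to be recalled, so there is no argument in the text to compare against; your elementary verification (symmetry and rank for sufficiency; writing $A=pw^\top$ from the one-dimensional column space and using $A=A^\top$ applied to $p$ to force $w=(w^\top p)p$ for necessity) is exactly the kind of argument the authors are implicitly relying on. Your observation that $a=0$ yields the zero matrix, so the ``if and only if'' should be read with $a\neq 0$, is a fair and harmless sharpening of the statement; note that in the paper's subsequent use (Section 8, where they write $B^s=app^\top$ and immediately normalize to $a=1$) the nondegenerate case is the only one that occurs.
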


\begin{lemma}
\label{lemma:structure}
    Let $A \in \M(n,n)$ be a symmetric, rank-one matrix. Then for any $B \in \M(n,n)$, $AB$ is also rank-one.
\end{lemma}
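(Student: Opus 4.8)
The statement follows from a single algebraic identity together with Lemma~\ref{lemma:rank1}. The plan is to invoke Lemma~\ref{lemma:rank1} to write $A = a\, pp^\top$ for some nonzero scalar $a$ (nonzero precisely because $A$ has rank one) and some unit vector $p \in \R^n$. Then I would simply compute
\[
AB = a\, pp^\top B = a\, p\,(p^\top B) = a\, p\,(B^\top p)^\top = p\, w^\top, \qquad w := a\, B^\top p \in \R^n,
\]
exhibiting $AB$ as an outer product $p w^\top$.

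The second step is to read off the rank from this representation. Every column of $p w^\top$ equals $w_i\, p$ for some scalar $w_i$, so the column space of $AB$ is contained in $\mathrm{span}(p)$, which is at most one-dimensional; hence $\rank(AB) \le 1$. Moreover, $\rank(AB) = 1$ exactly when $w \neq 0$, i.e. when $B^\top p \neq 0$. I would state the lemma's conclusion in this slightly sharpened form (rank at most one, with equality unless $B^\top p$ vanishes), since the degenerate case $AB = 0$ does occur in general and is harmless for the applications in Section~\ref{sec:8}, where the relevant products $X_k = B^s X_k$ are assumed to contribute new directions.

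There is no real obstacle here: the argument is a two-line computation, and the only point requiring a word of care is the degenerate case $AB = 0$, which I would simply flag rather than exclude. If one prefers to keep the statement verbatim (``$AB$ is also rank-one''), it suffices to note that this holds under the mild nondegeneracy condition $B^\top p \neq 0$, and otherwise $AB$ is the zero matrix, which is still of rank $\le 1$.
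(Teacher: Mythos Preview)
Your proof is correct and follows essentially the same approach as the paper: both invoke the outer-product representation $A = a\,pp^\top$ from Lemma~\ref{lemma:rank1} and observe that $AB = p\,w^\top$ for a suitable vector $w$, the paper reading this off row-wise with $w = p^\top B$ and you column-wise with $w = a\,B^\top p$. Your remark on the degenerate case $AB = 0$ (i.e.\ $\rank(AB)\le 1$ rather than exactly $1$) is a valid clarification that the paper's proof does not make explicit.
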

\begin{proof}
    Since $A$ is rank-one, there exist $a \in \R$, $p \in \R^n$ such that $A = a p p^\top$. Without loss of generality, let us assume $a=1$. Then,
    \[
    A = \left[\begin{array}{c}
         p_1 p^\top\\
         p_2 p^\top \\
         \ldots \\
         p_n p^\top
    \end{array}\right].
    \]
    Consequently, it follows that
    \[
    A B = 
    \left[\begin{array}{c}
         p_1 p^\top B\\
         p_2 p^\top B \\
         \ldots \\
         p_n p^\top B
    \end{array}\right].
    \]
    Calling $w:=p^\top B$, we see that $AB$ is also rank-one.
\end{proof}
\begin{lemma}
    Let $X \in \M(p,n)$, $p \geq n$ be such that $\rank \, X = k$ and let $A \in \M(n,n)$ be a symmetric, rank-one matrix. Then, for any $B \in \M(n,n)$, we have
    \[
    k \leq \rank\left([X, AB]^\top\right) \leq k+1.
    \]
\end{lemma}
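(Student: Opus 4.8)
The statement splits into the two inequalities, and both are consequences of elementary facts about ranks of block-stacked matrices together with Lemma~\ref{lemma:structure}. The plan is as follows. For the lower bound, I would observe that $[X,\,AB]^\top$ is obtained from $X$ by appending extra rows (the rows of $AB$); since appending rows to a matrix can never decrease its rank, one gets immediately
\[
\rank\!\left([X,\,AB]^\top\right)\ \geq\ \rank X\ =\ k.
\]

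For the upper bound, the key input is Lemma~\ref{lemma:structure}, which guarantees that $AB$ is again a rank-one matrix (here $A$ is symmetric rank-one, so $A=app^\top$ and $AB$ has all its rows proportional to the single vector $p^\top B$). Consequently every row of the block $AB$ lies in a one-dimensional subspace of $\R^n$. I would then use the general submultiplicative-type bound for stacked matrices, namely that for blocks $Y$ and $Z$ one has $\rank([Y,\,Z]^\top)\leq \rank Y+\rank Z$, because the row space of $[Y,\,Z]^\top$ is the sum of the row spaces of $Y$ and $Z$. Applying this with $Y=X$ and $Z=AB$ yields
\[
\rank\!\left([X,\,AB]^\top\right)\ \leq\ \rank X+\rank(AB)\ \leq\ k+1,
\]
since $\rank(AB)\leq 1$. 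Combining the two displays gives the claim.

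There is no genuine obstacle here: the argument is purely linear-algebraic and the only nontrivial ingredient, the rank-one property of $AB$, has already been established in Lemma~\ref{lemma:structure}. If one wishes to be completely self-contained, the single point worth spelling out is why $\rank([Y,Z]^\top)=\dim(\mathrm{rowspace}(Y)+\mathrm{rowspace}(Z))\leq \rank Y+\rank Z$, which follows from the fact that the span of a union of two sets of vectors is the sum of their individual spans and $\dim(V+W)\leq\dim V+\dim W$.
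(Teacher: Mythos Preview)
Your proof is correct and follows essentially the same route as the paper: both use Lemma~\ref{lemma:structure} to conclude $\rank(AB)\le 1$ and then argue via the row space of the stacked matrix. The only cosmetic difference is that the paper phrases the upper bound as a dichotomy (either $\mathrm{row}(AB)\subset\mathrm{row}(X)$ or $\mathrm{row}(AB)\cap\mathrm{row}(X)=\{0\}$), whereas you invoke the subadditivity $\rank([Y,Z]^\top)\le\rank Y+\rank Z$ directly; your formulation has the minor advantage of transparently covering the degenerate case $AB=0$.
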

\begin{proof}
    By the previous lemma, $AB$ is a rank-one matrix. Therefore, the dimension of the row space of $AB$ is equal to 1. At this point, two possibilities can occur: $\mathrm{row}(AB) \subset \mathrm{row}(X)$, and in this case $\rank([X,AB]^\top) = k$, or $\mathrm{row}(AB) \cap \mathrm{row}(X) = \{0\}$, and the rank is equal to $k+1$.
\end{proof}
For the next lemma, we need a definition.
\begin{definition}
    Let $P_0, \ldots, P_d$ be matrices in $\M(n,n)$, with $P_d \neq 0$. We call
    \[
    P(x) = P_0 + P_1 x + \ldots + P_d x^d
    \]
    a polynomial matrix of degree $d$.
\end{definition}
\begin{lemma}
\label{lemma:poly}
    Let $P(x)$ be a polynomial matrix of degree $d$, and suppose there exists a constant $\eta > 0$ such that $P(x)$ is invertible for every $x > \eta$. Then there exists $\alpha$ and a $C>0$ such that
    \[
    \|P^{-1}(x)\| \leq C (1+x)^\alpha.
    \]
\end{lemma}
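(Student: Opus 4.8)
The plan is to invoke Cramer's rule, writing $P^{-1}(x) = \det P(x)^{-1}\,\mathrm{adj}(P(x))$, and then to estimate separately the polynomial growth of the adjugate and the polynomial lower bound for the determinant. The whole argument is elementary degree bookkeeping together with one observation about when a polynomial cannot vanish.

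First I would bound the adjugate. Each entry of $\mathrm{adj}(P(x))$ is, up to sign, the determinant of an $(n-1)\times(n-1)$ submatrix of $P(x)$; since every entry of $P(x)$ is a polynomial in $x$ of degree at most $d$, each such cofactor is a polynomial in $x$ of degree at most $(n-1)d$. Hence there is a constant $C_1>0$ with $\|\mathrm{adj}(P(x))\|\le C_1(1+|x|)^{(n-1)d}$ for every $x\in\R$. Similarly $q(x):=\det P(x)$ is a polynomial of degree at most $nd$, and it is \emph{not} identically zero: otherwise $P(x)$ would be singular for all $x$, contradicting the invertibility of $P(x)$ for $x>\eta$. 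Write $q(x)=c_m x^m+(\text{lower order})$ with $c_m\neq0$ and $0\le m\le nd$. Since the leading term eventually dominates, there exists $R\ge\max(\eta,1)$ with $|q(x)|\ge\tfrac{|c_m|}{2}|x|^m\ge\tfrac{|c_m|}{2}$ for all $x\ge R$ (the case $m=0$ being trivial, as then $q\equiv c_0\neq 0$). Combining the two estimates gives, for $x\ge R$,
\[
\|P^{-1}(x)\|=\frac{\|\mathrm{adj}(P(x))\|}{|q(x)|}\le\frac{2C_1}{|c_m|}(1+x)^{(n-1)d},
\]
so the conclusion holds with $\alpha:=(n-1)d$ (one could sharpen this to $\alpha=\max(0,(n-1)d-m)$, but any admissible $\alpha$ suffices for the statement).

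To cover the remaining range I would use continuity: fix any $\eta'\in(\eta,R]$; on the compact interval $[\eta',R]$ the map $x\mapsto P^{-1}(x)$ is continuous, since its only possible singularities are the finitely many real roots of $q$, all of which lie in $(-\infty,\eta]$. Hence $\|P^{-1}\|$ is bounded on $[\eta',R]$, and enlarging the constant $C$ yields $\|P^{-1}(x)\|\le C(1+x)^\alpha$ for all $x\ge\eta'$, which is the form used in the applications.

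The one genuine subtlety — and the main obstacle to a cleaner statement with $\eta'=\eta$ — is the behaviour as $x\downarrow\eta$: if $\det P(\eta)=0$, then $\|P^{-1}(x)\|\to\infty$ while $(1+x)^\alpha$ stays bounded, so the estimate cannot extend all the way down to $\eta$ in general. This is harmless here, because in every use of the lemma the relevant variable ($|\xi|$ or $|\xi|^{-2}$) stays bounded away from $\eta$; otherwise everything is routine.
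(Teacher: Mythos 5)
Your proof is correct and follows essentially the same route as the paper's: the paper uses Cayley--Hamilton to write $P^{-1}(x)=-r(P(x),x)/\det P(x)$, which is just the adjugate formula you invoke via Cramer's rule, and both arguments then reduce to degree bookkeeping for the numerator and a lower bound for the determinant. If anything, your handling of the denominator is the more careful one: the paper simply asserts $1/\det P(x)\lesssim 1$ for $x>\eta$, which fails near $x=\eta$ when $\det P(\eta)=0$, exactly the subtlety you flag; as you note, this is harmless in the paper's applications because the relevant variable stays bounded away from the threshold.
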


\begin{proof}
    Let $p(\lambda,x) = \det(P(x) - \lambda I)$ be the characteristic polynomial of $P(x)$. We can consider $P$ as a matrix over the commutative ring $\R[x]$, so that the Cayley-Hamilton theorem holds. Accordingly, 
    \[
    p(P(x),x) = 0.
    \]
    Since $P(x)$ is invertible for every $x > \eta$, we have that $p(0,x) = \det(P(x))$ is a polynomial with no zeros for $x > \eta$. Hence,
    \[
    \frac{1}{p(0,x)} \lesssim 1.
    \]
    Finally, writing
    \[
    p(\lambda,x) = \lambda r(\lambda,x) + p(0,x),
    \]
    for some other polynomial $r$, we obtain
    \[
    P^{-1}(x) = -\frac{r(P(x),x)}{p(0,x)}.
    \]
    Since $r(P(x),x)$ has polynomial growth, the sought inequality follows.
\end{proof}
\begin{remark}
\label{rem:complex}
    Lemma \ref{lemma:poly} effortlessly extends to polynomial matrices with coefficients in $\mathbb{C}^{n\times n}$. 
\end{remark}
\begin{lemma}
\label{lemma:lowerbound}
    Let $P(x)$ be a polynomial matrix such that $P(x)$ is invertible for every $x$ large enough. Let $\alpha = \alpha(P)$ be the exponent in the previous lemma. There exists a $c>0$ such that
    \[
    \|P(x)U\| \geq \frac{c}{x^\alpha} \|U\|.
    \]
\end{lemma}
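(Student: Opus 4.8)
The plan is to derive the lower bound directly from the upper bound of Lemma~\ref{lemma:poly}. The key observation is that for any $x$ in the range where $P(x)$ is invertible and any vector $U$, writing $V = P(x)U$ we have $U = P^{-1}(x)V$, hence
\[
\|U\| = \|P^{-1}(x)V\| \leq \|P^{-1}(x)\|\,\|V\| \leq C(1+x)^\alpha \|P(x)U\|,
\]
where in the last step we invoked Lemma~\ref{lemma:poly}. Rearranging gives
\[
\|P(x)U\| \geq \frac{1}{C(1+x)^\alpha}\|U\|,
\]
and since $x$ is assumed large (in particular $x\geq 1$), we have $(1+x)^\alpha \leq 2^\alpha x^\alpha$, so that $\|P(x)U\| \geq c\,x^{-\alpha}\|U\|$ with $c = (C2^\alpha)^{-1}$. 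This is the claimed estimate.

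The only point requiring minor care is the operator-norm inequality $\|U\| = \|P^{-1}(x)V\| \leq \|P^{-1}(x)\|\,\|V\|$, which is just the definition of the induced operator norm, valid precisely because $P(x)$ is invertible (so that every $U$ arises as $P^{-1}(x)V$ for $V = P(x)U$). There is no genuine obstacle here: the lemma is an immediate corollary of Lemma~\ref{lemma:poly}, and the whole content is the duality between bounding $\|P^{-1}\|$ from above and bounding $P$ from below on the unit sphere. By Remark~\ref{rem:complex} the same argument applies verbatim if the coefficients of $P$ lie in $\mathbb{C}^{n\times n}$, which is the setting actually needed for the applications (e.g. $\widetilde{\mathsf{M}}(\xi)$ in Section~\ref{sec:applications}, where $x$ plays the role of $|\xi|$ or $|\xi|^{-1}$).

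Thus the proof is a three-line computation, and I would present it exactly as above. If anything, the ``hard part'' is purely bookkeeping: making sure the exponent $\alpha$ and constant $c$ are consistent with how the lemma is later used to read off the decay exponents $\alpha,\beta$ in Lemma~\ref{lemma:equiv}, but that consistency is automatic from the chain of inequalities.

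\begin{proof}
Let $\eta>0$ be such that $P(x)$ is invertible for all $x>\eta$, and let $\alpha$ and $C$ be as in Lemma~\ref{lemma:poly}, so that $\|P^{-1}(x)\|\leq C(1+x)^\alpha$ for $x>\eta$. Fix such an $x$ and an arbitrary $U$. Setting $V=P(x)U$, the invertibility of $P(x)$ gives $U=P^{-1}(x)V$, whence, by the definition of the induced operator norm and Lemma~\ref{lemma:poly},
\[
\|U\|=\|P^{-1}(x)V\|\leq \|P^{-1}(x)\|\,\|V\|\leq C(1+x)^\alpha\,\|P(x)U\|.
\]
Rearranging yields
\[
\|P(x)U\|\geq \frac{1}{C(1+x)^\alpha}\,\|U\|,
\]
and since we may assume $x\geq 1$ (enlarging $\eta$ if necessary), we have $(1+x)^\alpha\leq 2^\alpha x^\alpha$, so that
\[
\|P(x)U\|\geq \frac{c}{x^\alpha}\,\|U\|,\qquad c:=\frac{1}{C\,2^\alpha}>0.
\]
By Remark~\ref{rem:complex}, the same argument applies to polynomial matrices with coefficients in $\mathbb{C}^{n\times n}$.
\end{proof}
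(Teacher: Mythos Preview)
Your proof is correct and follows essentially the same approach as the paper: write $\|U\| = \|P^{-1}(x)P(x)U\| \leq \|P^{-1}(x)\|\,\|P(x)U\|$ and invoke Lemma~\ref{lemma:poly}. The paper's version is even terser, omitting the $(1+x)^\alpha \leq 2^\alpha x^\alpha$ step, but your added detail is harmless.
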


\begin{proof}
    The proof follows by simply noting that, for every invertible matrix $A$,
    \[
    \|U\| = \|A^{-1}AU\| \leq \|A^{-1}\| \|AU\|.
    \]
    An application of the Lemma \ref{lemma:poly} yields the statement.
\end{proof}

\begin{remark}
\label{rem:svd}
    Unfortunately, obtaining a non-vanishing lower bound is generally not possible. A simple example is the polynomial matrix
    \[
    P(x) = \left[ \begin{array}{cc}
        1 &x  \\
        0 &1 
    \end{array}\right].
    \]
    Indeed, while $P(x)$ is invertible for every $x$, it is possible to show that its smallest singular value
    \[
    \sigma_2(P(x)) := \min_{U \in \R^2, \|U\|=1} \|P(x)U\| \sim \frac{1}{x^2}, \quad \text{ as } x \to + \infty.
    \]
    Recalling that the norm of the inverse can be equivalently characterized as $(\sigma_2(P(x)))^{-1}$ \cite[Theorem 3.3]{algebra_book}, we see that in this case the estimate in Lemma \ref{lemma:poly} cannot be improved.
\end{remark}

\section{Proof of Lemma \ref{lemma:equiv}}
\label{sec:appproof}

\noindent
Let us work in high frequencies. The low-frequency case works in the same fashion. For every nonzero $x \in \R$, we define the matrix
\[
\mathsf{M}(\xi) = \left[B^s, \frac{1}{|\xi|}B^s(\i \xi A + B^a), \ldots, \frac{1}{|\xi|^K}B^s (\i \xi A + B^a)^K \right]^\top \in \mathbb{C}^{Kn \times n}.
\]
We recall that the rank of a matrix can be equivalently defined as
\[
\rank\, \mathsf{M}(\xi) = \dim \mathrm{row} \, \mathsf{M}(\xi).
\]
In particular, the inhomogeneous Kalman condition holds for $\mathsf{M}(\xi)$ and it asserts that the dimension of the row space of $\mathsf{M}(\xi)$ is equal to $n$. Equivalently, there exist $n$ rows $v_1(\xi), \ldots, v_n(\xi)$ of 
$\mathsf{M}(\xi)$ which are linearly independent. Hence, we have
\[
|\mathsf{M}(\xi)\widehat{U}|^2 = \sum_{k=0}^{K}|\xi|^{-2k}|B^s(\i \xi A+B^a)^k \widehat{U}|^2 \geq \sum_{j=0}^n |\l v_j(\xi), \widehat{U}\r|^2 = | \Tilde{\mathsf{M}}(\xi)\widehat{U}|^2,
\]
where $\Tilde{\mathsf{M}}(\xi)$ is the matrix
\[
\Tilde{\mathsf{M}}(\xi) = \left[v_1(\xi), \ldots, v_n(\xi)\right]^\top \in \mathbb{C}^{n \times n}.
\]
Clearly $\Tilde{\mathsf{M}}(\xi)$ is invertible. Besides, it is apparent that it is a polynomial matrix in $1/\xi$. We can thus rewrite it as
\[
\Tilde{\mathsf{M}}(\xi) = \frac{1}{|\xi|^{\alpha'}}
\Tilde{\mathsf{M}}'(\xi),
\]
where $\alpha'\in\mathbb{N}$ is the degree of $\Tilde{\mathsf{M}}$, and $\Tilde{\mathsf{M}}'$ is an invertible polynomial matrix in $\xi$. 
By Lemma \ref{lemma:lowerbound}, we have
\[
|\Tilde{\mathsf{M}}'\widehat{U}(\xi)|\geq \frac{1}{|\xi|^{\alpha''}}|\widehat{U}(\xi)|.
\]
The thesis follows with $\alpha = \alpha' + \alpha''$. \qed

\bibliographystyle{apalike}  
\bibliography{NonSym}
\vfill

\end{document}